\theoremstyle{plain}
\newtheorem{thm}{\protect\theoremname}
\newtheorem{prop}[thm]{\protect\propositionname}
\newtheorem{lem}[thm]{\protect\lemmaname}
\newtheorem{cor}[thm]{\protect\corollaryname}
\theoremstyle{remark}
\theoremstyle{definition}
\numberwithin{thm}{section}
\providecommand{\corollaryname}{Corollary}
\providecommand{\definitionname}{Definition}
\providecommand{\examplename}{Example}
\providecommand{\lemmaname}{Lemma}
\providecommand{\notationname}{Notation}
\providecommand{\remarkname}{Remark}
\providecommand{\remarkname}{Definition}
\providecommand{\propositionname}{Proposition}
\providecommand{\theoremname}{Theorem}
\providecommand{\equationname}{Equation}
\providecommand{\figurename}{Figure}
\providecommand{\appendixname}{Appendix}
\providecommand{\sectionname}{Section}
\providecommand{\stepname}{Step}
\newcommand{\N}{\mathbb N}
\newcommand{\R}{\mathbb R}
\newcommand{\dist}{\operatorname{dist}}
\newcommand{\graph}{\operatorname{graph}}
\newcommand{\Hm}[2]{\mathcal H^{#1}_{#2}}
\newcommand{\HmE}[1]{\mathcal H_{#1}}
\newcommand{\trappedregion}{\mathcal T}
\newcommand{\outwardvariation}[1]{D_{#1}}
\newcommand{\conffact}[1]{u_{#1}}
\renewcommand{\div}{\operatorname{div}}
\newcommand{\deriv}[2]{\frac{d #1}{d #2}}
\newcommand{\partderiv}[2]{\frac{\partial #1}{\partial #2}}
\newcommand{\Cinner}{C_\mathrm{inner}}
\newcommand{\Couter}{C_\mathrm{outer}}
\newcommand{\Router}{R_\mathrm{outer}}
\newcommand{\Rend}{R_\mathrm{end}}
\newcommand{\transportandrescale}[3]{\Pi_{#1, #2}^{#3}}
\newcommand{\Tub}[2]{\operatorname{Tub}(#1, #2)}
\newcommand{\sphere}{S}
\newcommand{\reg}{\operatorname{reg}}
\newcommand{\sing}{\operatorname{sing}}
\newcommand{\solomonwhite}{the maximum principle of Solomon and White \cite[Theorem, p.~686 and Remarks~1 and 2, pp.~690-691]{SolomonWhite89} and \cite[Theorem~4]{White10}%
}
\newcommand{\stepheader}[1]{\vspace{1em}\noindent\textbf{#1}.}
\newcounter{stepnumber}
\newcommand{\step}{\protect\refstepcounter{stepnumber}Step~\thestepnumber}
\title[Horizons diffeomorphic to unit normal bundles]
{Outermost apparent horizons diffeomorphic to unit normal bundles}
\author{Mattias Dahl}
\author{Eric Larsson}
\date{May 21, 2018}
\address{Institutionen f\"or Matematik \\
  Kungliga Tekniska H\"ogskolan \\
  100 44 Stockholm \\
  Sweden} 
\email[Mattias Dahl]{dahl@math.kth.se}
\email[Eric Larsson]{ericlar@math.kth.se}
\begin{document}
\maketitle

\begin{abstract}
Given a submanifold \(S \subset \R^n\) of codimension at least three, we construct an asymptotically Euclidean Riemannian metric on \(\R^n\) with nonnegative scalar curvature for which the outermost apparent horizon is diffeomorphic to the unit normal bundle of \(S\).
\end{abstract}

\setcounter{tocdepth}{1}
\tableofcontents

\makeatletter
\providecommand\@dotsep{5}
\renewcommand{\listoftodos}[1][\@todonotes@todolistname]{%
  \@starttoc{tdo}{#1}}
\makeatother
%\listoftodos

\section{Introduction}

\label{sec:introduction}

An asymptotically Euclidean manifold is a Riemannian manifold with an end on which the metric approaches the Euclidean metric.
In such a manifold, an outermost apparent horizon is a bounding minimal hypersurface which encloses all other bounding minimal hypersurfaces.
An asymptotically Euclidean manifold can be interpreted as a time symmetric slice of an asymptotically Minkowskian spacetime describing an isolated gravitational system.
The dominant energy condition on the spacetime then means that the time symmetric slice has nonnegative scalar curvature.
By the Hawking--Penrose singularity theorem the outermost apparent horizon must be located inside the event horizon in the spacetime.
The outermost apparent horizon, which can be found with no further data than a time symmetric slice, may therefore serve as a substitute for the event horizon, which depends on the entire spacetime structure.
The question which motivates the work in this paper is the following: Which smooth manifolds can be found as outermost apparent horizons in asymptotically Euclidean manifolds with nonnegative scalar curvature?

\subsection*{Outermost apparent horizons}

We use the convention that the mean curvature of an oriented hypersurface in a Riemannian manifold is the trace of its second fundamental form, with positive sign if a variation in the direction of the oriented normal \(\nu\) increases area. 
In other words, the \(k\)-dimensional sphere of radius \(r\) in \((k+1)\)-dimensional Euclidean space has positive mean curvature \(k/r\) with respect to the outward direction.

For spacetime initial data sets, the concepts of weakly outer trapped surfaces, trapped regions, and outermost apparent horizons are defined using null expansions.
In our setting of Riemannian manifolds these definitions reduce to the following.
Let \((M, g)\) be an \(n\)-dimensional connected Riemannian manifold with an asymptotically Euclidean end.
A compact hypersurface in \(M\) which separates the asymptotically Euclidean end from the rest of the manifold is called \emph{weakly outer trapped} if its mean curvature with respect to the normal directed towards the asymptotically Euclidean end is non-positive.
The \emph{trapped region} \(\trappedregion\) is the union of compact sets with smooth boundary for which the boundary is weakly outer trapped.
The \emph{outermost apparent horizon} of \( (M,g) \) is the boundary \(\partial \trappedregion\) of the trapped region.
Note that the trapped region may be empty.

For asymptotically Euclidean manifolds with nonnegative scalar curvature and smooth outermost apparent horizon, the induced metric on the horizon is conformal to a metric with positive scalar curvature.
The first result in this direction, when the manifold has dimension three, is Hawking's black hole topology theorem \cite[Proposition 9.3.2]{HawkingEllis}.
The general result, in higher dimensions, can be found in work by Cai, Galloway and Schoen \cite{CaiGalloway01}, \cite{GallowaySchoen06}, \cite{Galloway08}.
The proofs use the fact that the horizon is necessarily stable and outer area minimizing.
Hence our motivating question may be refined:
Is the existence of a positive scalar curvature metric sufficient for a compact, bounding manifold to be the outermost apparent horizon in an asymptotically Euclidean manifold of nonnegative scalar curvature?
Note that there are bounding manifolds which do not admit positive scalar curvature metrics, for instance the \(n\)-dimensional torus.
In the present paper we construct many new examples of outermost apparent horizons, but we are far from answering the question completely.

Obvious examples of outermost apparent horizons are spheres, which appear in constant time slices of the Schwarz\-schild spacetime. 
Chru{\'s}ciel and Mazzeo \cite{ChruscielMazzeo03} construct asymptotically Euclidean metrics for which the outermost apparent horizon is diffeomorphic to a number of spheres.
See also work by Corvino \cite{Corvino05}.
In the work \cite{Schwartz08}, Schwartz has a construction of outermost apparent horizons diffeomorphic to a product of spheres. 
Unfortunately, there is a gap in Schwartz's argument where the horizon is implicitly assumed to be connected, and it does not seem to be possible to repair this gap with the methods of that paper.
An inspiration when searching for horizons with nontrivial topology are examples of non-spherical black holes. The first such example is the construction by Emparan and Reall of a black ring spacetime \cite{EmparanReall02}.
Another important example is the ``Black Saturn'' found by Elvang and Figueras \cite{ElvangFiguearas07}.

In this paper we will construct examples of outermost apparent horizons which are diffeomorphic to unit normal bundles of submanifolds of Euclidean space.
Our result is inspired by Schwartz \cite{Schwartz08}, Chru{\'s}ciel and Mazzeo \cite{ChruscielMazzeo03}, as well as by Carr \cite[Theorem~1]{Carr88}.
In \cite{Carr88}, it is proved that boundaries of regular neighborhoods of embedded cell complexes of codimension at least three admit metrics of positive scalar curvature.
For compact submanifolds, the boundary of a regular neighborhood is diffeomorphic to the unit normal bundle.

\subsection*{Statement of results}

Let \(S \subset \R^n\) be a smooth submanifold.
We denote the unit normal bundle of \(S\) by \(UNS\), so that \(UN_xS\) is the sphere of unit vectors in the fiber \(N_x S\) of the normal bundle \(NS\).

The main theorem of this paper is the following.

\begin{thm}\label{thm:main-theorem}
Let \(n \geq 3\) and let \( S \subset \R^n \) be a compact embedded smooth submanifold of dimension \(m\).
Assume that \(n-m \geq 3\). 
For \( \epsilon > 0 \) and \(x \in \R^n \setminus S\), let
\[
\conffact{\epsilon}(x)
\coloneqq
1 + \epsilon^{n - m - 2} \int_S |x - y|^{-(n - 2)} \, dy
\]
and let \(g_\epsilon\) be the Riemannian metric on \( \R^n \setminus S \) defined by
\[
g_\epsilon
\coloneqq
\conffact{\epsilon}^{4/(n - 2)} \delta
\]
where \(\delta\) is the Euclidean metric on \(\R^n\).
For sufficiently small \(\epsilon > 0\) it holds that the outermost apparent horizon \(\Sigma_\epsilon\) of \((\R^n \setminus S, g_\epsilon)\) is diffeomorphic to the unit normal bundle \(UNS\).
In fact, \(\Sigma_\epsilon\) is the graph of a smooth function on \(UNS\) in normal coordinates for \(S\).
\end{thm}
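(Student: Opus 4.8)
The plan is to localize the problem near \(S\) by rescaling, compare \((\R^n\setminus S,g_\epsilon)\) with an explicit rotationally symmetric model in the transverse Euclidean directions, and then use the tubes \(\partial\Tub{S}{r}\) as barriers to trap the horizon into a thin shell. First I would record the two basic properties of \(g_\epsilon\). Since \(|x-y|^{-(n-2)}\) is harmonic in \(x\) away from \(y\), the function \(\conffact{\epsilon}\) is positive and harmonic on \(\R^n\setminus S\), so by the conformal transformation law for scalar curvature \(\Scal_{g_\epsilon}\equiv 0\); and since \(\conffact{\epsilon}=1+O(|x|^{-(n-2)})\) with matching derivative bounds as \(|x|\to\infty\), the metric \(g_\epsilon\) is asymptotically Euclidean. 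Moreover \(\conffact{\epsilon}\to 1\) locally smoothly on \(\R^n\setminus S\) and uniformly outside any fixed neighborhood of \(S\) as \(\epsilon\to 0\), while \((\R^n,\delta)\) has empty trapped region in a manner stable under small perturbations of the metric (at the point farthest from the origin, any compact separating hypersurface has strictly positive mean curvature toward the end). Hence for small \(\epsilon\) the trapped region \(\trappedregion\) is contained in an arbitrarily small tubular neighborhood of \(S\), and \(\Sigma_\epsilon=\partial\trappedregion\) lies where normal coordinates for \(S\) are valid.

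Next I would expand the singular integral near \(S\). In normal coordinates \((s,v)\), \(s\in S\), \(v\in N_sS\), set \(v=\epsilon w\); a computation shows that for \(y\) near \(s\) the submanifold \(S\) looks like its tangent plane \(T_sS\cong\R^m\) up to a relative error \(O(\epsilon|w|)\), while the remainder of \(S\) contributes only \(O(1)\) to the integral. This yields, uniformly in \((s,w)\) and with analogous estimates for derivatives,
\[
\conffact{\epsilon}(s+\epsilon w)=1+c_{n,m}|w|^{-(n-m-2)}+O(\epsilon),
\qquad c_{n,m}=\int_{\R^m}(1+|z|^2)^{-(n-2)/2}\,dz,
\]
the integral converging precisely because \(n-m\geq 3\). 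So in the blow-up the problem converges to the model in which the transverse \(\R^{n-m}\) carries the rotationally symmetric metric \(U_0^{4/(n-2)}\delta\) with \(U_0(w)=1+c_{n,m}|w|^{-(n-m-2)}\). Using that the Euclidean mean curvature of \(\partial\Tub{S}{r}\) is \(\tfrac{n-m-1}{r}+O(1)\) together with the conformal change of mean curvature, one finds a unique radius \(\rho_0>0\), determined by
\[
c_{n,m}\,\rho_0^{-(n-m-2)}=\frac{n-m-1}{\tfrac{2(n-1)(n-m-2)}{n-2}-(n-m-1)},
\]
whose denominator is positive exactly when \(n-m\geq 3\), at which the model neck is minimal, with mean curvature toward the end strictly negative for \(r<\rho_0\) and strictly positive for \(r>\rho_0\). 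Feeding the estimates above into the mean curvature of \(\partial\Tub{S}{\rho\epsilon}\) for \(g_\epsilon\), I obtain that for any fixed \(\eta\in(0,\rho_0)\) and all sufficiently small \(\epsilon\), the tube \(\partial\Tub{S}{(\rho_0-\eta)\epsilon}\) is pointwise strictly weakly outer trapped while \(\partial\Tub{S}{(\rho_0+\eta)\epsilon}\) is pointwise strictly outer untrapped. By the definition of \(\trappedregion\) the first tube forces \(\Tub{S}{(\rho_0-\eta)\epsilon}\subseteq\trappedregion\), and the second, being a strictly untrapped hypersurface surrounding \(S\), forces \(\trappedregion\subseteq\Tub{S}{(\rho_0+\eta)\epsilon}\) by the standard comparison for trapped regions, which rests on \solomonwhite. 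Hence \(\Sigma_\epsilon\) lies in the thin annular shell \(\Tub{S}{(\rho_0+\eta)\epsilon}\setminus\overline{\Tub{S}{(\rho_0-\eta)\epsilon}}\), which normal coordinates identify with \(UNS\times(\rho_0-\eta,\rho_0+\eta)\).

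It then remains to promote ``\(\Sigma_\epsilon\) lies in a thin shell around the round tube'' to ``\(\Sigma_\epsilon\) is a smooth graph over \(UNS\)''. One route: \(\partial\trappedregion\) is outer area minimizing, hence a locally mass minimizing current, so after rescaling by \(\epsilon^{-1}\) it is, on balls of fixed size, a minimizer contained in a neighborhood of \(\{|w|=\rho_0\}\) that shrinks with \(\eta\); Allard's regularity theorem then gives uniform \(C^{2,\alpha}\) graph estimates once \(\eta\) (equivalently \(\epsilon\)) is small, so \(\Sigma_\epsilon=\{s+\epsilon\phi_\epsilon(s,\theta)\theta:(s,\theta)\in UNS\}\) for a smooth \(\phi_\epsilon\) on \(UNS\) with \(\phi_\epsilon\to\rho_0\). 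A second route, avoiding high-dimensional regularity theory altogether, is to solve the minimal surface equation for graphs over \(UNS\) directly, using the two tube graphs as ordered sub- and supersolutions, to produce a smooth minimal graph \(\Sigma_\epsilon\) in the shell; this surface is weakly outer trapped, so it bounds a compact region inside \(\trappedregion\), and taking it to be the outermost such graphical minimal surface, \solomonwhite\ rules out any trapped set protruding beyond it, so \(\Sigma_\epsilon=\partial\trappedregion\). In either case \(\Sigma_\epsilon\) is a smooth hypersurface, diffeomorphic to \(UNS\) and the graph of a smooth function on \(UNS\) in normal coordinates for \(S\), which is the assertion of the theorem.

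The main obstacle is precisely this last step: controlling the global shape of \(\partial\trappedregion\) well enough, and uniformly in \(\epsilon\), to know that it is a single smooth graph over all of \(UNS\) rather than, a priori, a non-graphical or singular minimal hypersurface somewhere inside the shell. The scaling and mean-curvature estimates of the first two steps are delicate—one must expand a singular integral over a curved submanifold and the second fundamental form of its tubes to the correct order—but they are ultimately routine; the geometric measure theory and maximum-principle input needed to pin down the horizon is the real crux.
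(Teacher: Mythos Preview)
Your overall strategy is the paper's: rescale to a rotationally symmetric model, use tubes as barriers, then upgrade ``trapped in a thin shell'' to ``smooth graph over \(UNS\)''. Two of your steps, however, do not go through as written.

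\emph{Localization to a tube.} The assertion that a single strictly untrapped tube \(\Tub{S}{(\rho_0+\eta)\epsilon}\) forces \(\trappedregion\) inside it is not a ``standard comparison''; the Solomon--White maximum principle needs a \emph{foliation} by strictly untrapped hypersurfaces from that tube out to the asymptotic end. Your first-paragraph argument does not supply this: the farthest-from-the-origin trick confines a weakly outer trapped surface only to a fixed Euclidean ball, not to a tubular neighborhood of \(S\), since the farthest point may lie in the thin tube (where \(g_\epsilon\) is far from \(\delta\)) while other portions of the surface lie well outside it. Your expansion does give positive \(g_\epsilon\)-mean curvature for tubes of radius between \((\rho_0+\eta)\epsilon\) and some fixed small \(\Router\), but tubes of radius comparable to the focal radius of \(S\) can have negative Euclidean mean curvature (already for a circle in \(\R^3\)), so the tube foliation cannot be continued to meet the large-sphere foliation. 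The paper closes this gap not with barriers but with a compactness argument (\prettyref{prop:convergence-without-rescaling}): it works with \emph{outer area minimizing stationary} hypersurfaces rather than arbitrary weakly outer trapped surfaces, takes a Schoen--Simon limit as \(\epsilon\to0\), shows the limit is stationary in all of \((\R^n,\delta)\) across the removable set \(S\) (here \(n-m\ge 3\) enters again), and appeals to the nonexistence of compact stationary varifolds in Euclidean space. Only afterwards is \(\partial\trappedregion\) identified with the outermost such hypersurface.

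\emph{Multiplicity.} In your Allard route, lying in a thin shell does not bound the density: outer area minimizing is only one-sided minimizing, and the rescaled \(\Sigma_\epsilon\) could a priori converge to the model cylinder with multiplicity greater than one, so Allard does not apply directly. The paper rules this out explicitly (Step~4 of \prettyref{prop:local-graphicality}): once the rescaled surface is known to be a union of \(N\) graphs over the cylinder, an outward variation filling in the region between the lowest \(N-1\) sheets, combined with the outer area minimizing inequality, forces \(N=1\). Your alternative route --- build a minimal graph between the barriers and declare it to be \(\partial\trappedregion\) --- has the same gap in another guise: you must still exclude that \(\partial\trappedregion\) is some other, non-graphical or multi-sheeted, stationary hypersurface in the shell.
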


The integrand in the definition of \(\conffact{\epsilon}\) is the Green's function of the Euclidean Laplacian, and hence \(\conffact{\epsilon}\) is harmonic and the scalar curvature of \((\R^n \setminus S, g_\epsilon)\) is zero (see \cite[Theorem~1.159]{Besse}).
For large \(x\) we have
\[
\conffact{\epsilon}(x)
= 1 + \epsilon^{n-m-2} \HmE{m} (S) |x|^{-(n-2)} + O(|x|^{-(n-1)}),
\]
where \(\HmE{m}\) denotes the \(m\)-dimensional Hausdorff measure with respect to the Euclidean metric \(\delta\).
Thus \((\R^n \setminus S, g_\epsilon)\) has an asymptotically Euclidean end.

A neighborhood of \(S\) is another end of the manifold \((\R^n \setminus S, g_\epsilon)\), and the function \(\conffact{\epsilon}\) tends to infinity at \(S\).
The proof will show that the outermost apparent horizon \(\Sigma_\epsilon\) encloses a tubular hypersurface of Euclidean radius proportional to \(\epsilon\) around \(S\). 
It is possible to modify the harmonic function \(\conffact{\epsilon}\) inside this tubular hypersurface to give a smooth superharmonic function on all of \(\R^n\), thus removing the end at \(S\).
The modification of \(\conffact{\epsilon}\) can be done by appropriately cutting off the singularity at zero of the function \( r \mapsto |r|^{-(n-2)} \) appearing in the integral in the definition of \(u_\epsilon\).
This modification will not change the location of the outermost apparent horizon, and we get the same type of examples of horizons in asymptotically flat manifolds but with non-negative scalar curvature and no end near \(S\).
We conclude the following.

\begin{thm}\label{thm:main-corollary}
Let \( S \subset \R^n \) be a compact embedded smooth submanifold of codimension at least 3. 
Then there is an asymptotically Euclidean metric on \(\R^n\) with non-negative scalar curvature for which the outermost apparent horizon is diffeomorphic to the unit normal bundle \(UNS\).
In fact, the horizon is the graph of a smooth function on \(UNS\) in normal coordinates for \(S\).
\end{thm}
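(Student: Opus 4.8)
The plan is to keep $g_\epsilon$ unchanged outside a thin tube about $S$ — in particular in a neighbourhood of $\Sigma_\epsilon$ and everywhere outside it — and to replace the singular conformal factor $\conffact{\epsilon}$ inside that tube by a function which is smooth and superharmonic on all of $\R^n$. By the discussion following Theorem~\ref{thm:main-theorem}, $\Sigma_\epsilon$ encloses a tubular hypersurface about $S$ of Euclidean radius bounded below by $c_0\epsilon$ for some $c_0>0$; I fix $\delta_0\in(0,c_0)$ once and for all, so that the solid $\delta_0\epsilon$-tube $\{x\in\R^n:\dist_\delta(x,S)\le\delta_0\epsilon\}$ lies inside the bounded region $\Omega_\epsilon$ enclosed by $\Sigma_\epsilon$.

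\emph{Constructing the replacement.} First I would produce $\phi\in C^\infty([0,\infty))$ with $\phi(r)=r^{-(n-2)}$ for $r\ge\delta_0\epsilon$, with $\phi>0$ throughout, and with radial Laplacian $\phi''+\tfrac{n-1}{r}\phi'\le0$, equivalently with $r\mapsto r^{n-1}\phi'(r)$ non-increasing on $[0,\infty)$. Since that quantity is the constant $-(n-2)$ for $r\ge\delta_0\epsilon$ and must vanish at the origin for $\phi$ to be smooth there, it suffices to let it decrease monotonically from $0$ to $-(n-2)$ over $[0,\delta_0\epsilon]$ with infinite-order contact at the endpoints — one may even take $\phi$ constant near $0$, which makes $\phi(|x-y|)$ smooth across $x=y$ — and monotonicity forces $\phi'\le0$, hence $\phi\ge\phi(\delta_0\epsilon)>0$. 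Then set
\[ v_\epsilon(x)\coloneqq 1+\epsilon^{n-m-2}\int_S\phi(|x-y|)\,dy,\qquad x\in\R^n. \]
This $v_\epsilon$ is smooth and positive on all of $\R^n$; for each $y$ the Euclidean Laplacian of $x\mapsto\phi(|x-y|)$ is the radial Laplacian of $\phi$ evaluated at $|x-y|$, which is $\le0$, so differentiation under the integral sign gives $\Delta v_\epsilon\le0$; and $v_\epsilon=\conffact{\epsilon}$ wherever $\dist_\delta(\cdot,S)\ge\delta_0\epsilon$, since there $\phi(|x-y|)=|x-y|^{-(n-2)}$ for every $y\in S$.

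\emph{The new metric and its horizon.} Let $\tilde g_\epsilon\coloneqq v_\epsilon^{4/(n-2)}\delta$, a smooth Riemannian metric on all of $\R^n$. By the conformal transformation law for scalar curvature (\cite[Theorem~1.159]{Besse}), $\Scal(\tilde g_\epsilon)=-\tfrac{4(n-1)}{n-2}v_\epsilon^{-(n+2)/(n-2)}\Delta v_\epsilon\ge0$; and since $v_\epsilon=\conffact{\epsilon}$ for $|x|$ large, $(\R^n,\tilde g_\epsilon)$ has the same asymptotically Euclidean end as before, with $S$ now an ordinary submanifold carrying no end. It then remains to show that the outermost apparent horizon of $(\R^n,\tilde g_\epsilon)$ is again $\Sigma_\epsilon$. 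On one side: because $\tilde g_\epsilon=g_\epsilon$ near $\Sigma_\epsilon$ and $\Sigma_\epsilon$ is minimal for $g_\epsilon$, it is minimal — hence weakly outer trapped — for $\tilde g_\epsilon$, so $\overline{\Omega_\epsilon}$ is contained in the trapped region of $(\R^n,\tilde g_\epsilon)$ and the outermost apparent horizon encloses $\Sigma_\epsilon$. On the other side, suppose the outermost apparent horizon $\tilde\Sigma_\epsilon$ of $(\R^n,\tilde g_\epsilon)$ strictly encloses $\Sigma_\epsilon$. Then $\tilde\Sigma_\epsilon$ is weakly outer trapped and, being disjoint from the interior of the trapped region and hence from the solid $\delta_0\epsilon$-tube, lies in $\{\dist_\delta(\cdot,S)\ge\delta_0\epsilon\}$, where $\tilde g_\epsilon=g_\epsilon$. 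Excising from the region it bounds a thin tube about $S$ — whose boundary, for small radius $\rho$, has mean curvature $\approx -(n-m-1)/\rho<0$ towards the asymptotic end — yields a compact subset of $\R^n\setminus S$ with weakly outer trapped boundary for $g_\epsilon$ that reaches strictly outside $\Sigma_\epsilon$, contradicting that $\Sigma_\epsilon$ is the outermost apparent horizon of $(\R^n\setminus S,g_\epsilon)$. Hence $\tilde\Sigma_\epsilon=\Sigma_\epsilon$, which by Theorem~\ref{thm:main-theorem} is diffeomorphic to $UNS$ and is the graph of a smooth function on $UNS$ in normal coordinates for $S$.

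The scalar-curvature identity and the asymptotic expansion are routine. The points requiring care are the design of $\phi$ in the first step — simultaneously smooth at the origin, positive, and superharmonic after averaging over $S$ — and, more substantially, the last step, which requires transporting statements about trapped regions between the two manifolds $\R^n\setminus S$ and $\R^n$ and leans on the standard existence and regularity theory for outermost apparent horizons. I expect that last step to be the main obstacle.
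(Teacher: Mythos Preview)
Your approach is exactly what the paper sketches in the paragraph preceding the theorem: cap the singularity of $r\mapsto r^{-(n-2)}$ inside a thin tube, integrate over $S$ to get a smooth superharmonic replacement for $u_\epsilon$, and argue that since the modification lies strictly inside $\Sigma_\epsilon$ the horizon is unchanged. The paper gives no further detail than that, so there is little more to compare.

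Two small corrections. Your asserted mean curvature $\approx -(n-m-1)/\rho$ for $\operatorname{Tub}(S,\rho)$ in $g_\epsilon$ is wrong --- that expression is, up to sign, the \emph{Euclidean} value, and the conformal factor alters it substantially; all you need, and what actually holds, is that the outward mean curvature is negative for $0<\rho\le C_{\mathrm{inner}}\epsilon$, by Proposition~\ref{prop:inner-barriers}. More importantly, the step where you assert that $\tilde\Sigma_\epsilon$ is itself weakly outer trapped presupposes regularity of the boundary of the trapped region, which is known in general only for $n\le 7$ and which the paper deliberately avoids in Section~\ref{sec:proof-of-the-main-theorem}. You can sidestep it the same way: work not with $\tilde\Sigma_\epsilon$ but with an arbitrary compact $K$ having smooth weakly outer trapped boundary in $\tilde g_\epsilon$ and reaching outside $\overline\Omega_\epsilon$; if $\partial K$ meets the $\delta_0\epsilon$-tube, replace $K$ by its union with the closed $\delta_0\epsilon$-tube and smooth the resulting corners (both pieces have strictly negative outward mean curvature, so there is room), then excise a small $\rho$-tube as you already do. You flagged this transport step as the main obstacle, and that diagnosis is correct.
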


The basic idea of the proof of \prettyref{thm:main-theorem} is this:
Since the conformal factor is close to one outside of a neighborhood of \(S\) which shrinks as \(\epsilon\) tends to zero, the horizon (if it exists) is expected to approach \(S\) in this limit.
Close to \(S\), the difference between \(S\) and its tangent space should be negligible, so the horizon \(\Sigma_\epsilon\) should be close to the one we would have if \(S\) were a linear subspace, in which case \(\Sigma_\epsilon\) would be a cylinder.
If the horizon is locally close to a cylinder around the tangent space of \(S\), then it should be globally diffeomorphic to the unit normal bundle of \(S\).

We now present some examples of outermost apparent horizons that can be found with \prettyref{thm:main-theorem}.

\begin{itemize}
\item
Horizons diffeomorphic to products of spheres:
Let \(n\) and \(m\) be positive integers such that \(n \geq m + 3\).
Let \(S = S^m\) be the unit sphere in \(\R^{m + 1} \subset \R^n\).
Then \prettyref{thm:main-theorem} gives an outermost apparent horizon diffeomorphic to \(S^m \times S^{n - m - 1}\).
\item
Horizons can have many components:
Let \(n \geq 3\).
Let \(S\) be a set of \(k\) points in \(\R^n\).
Then \prettyref{thm:main-theorem} gives an outermost apparent horizon \(\Sigma\) diffeomorphic to the disjoint union of \(k\) spheres of dimension \((n-1)\).
In general, we can find examples of disconnected horizons using any choice of a disconnected submanifold \(S\).
Refining the proof of \prettyref{thm:main-theorem} one can probably allow the components of \(S\) to have different dimensions, which would allow examples like the ``Black Saturn''.   
\item
Horizons can have any fundamental group:
Let \(\pi\) be a finitely presented group.
Using the generators and relations of \(\pi\), it is not difficult to construct a compact hypersurface \(S \subset \R^5\) such that \(\pi_1(S) = \pi\).
Embed \(\R^5\) in \(\R^7\).
\prettyref{thm:main-theorem} gives us an apparent horizon which is diffeomorphic to the unit normal bundle of \(S \subset \R^7\).
The fiber of this bundle is \(S^2\) so the long exact sequence for the homotopy groups of a fibration tells us that \(\pi_1(UNS) = \pi_1(S) = \pi\).
This shows that every finitely presented group is the fundamental group of an outermost apparent horizon in an asymptotically flat \(7\)-dimensional scalar flat Riemannian manifold.
\end{itemize}

\subsection*{Overview of the paper}

In \prettyref{sec:rescaled-metrics-and-their-convergence} we prove that the function \(\conffact{\epsilon}\) near a point of \(S\) is close to the function \(\conffact{\infty}\) obtained by replacing \(S\) by its tangent space at the point.
This is proved by explicit computations involving rescaled versions of the function \(\conffact{\epsilon}\).

In \prettyref{sec:the-mean-curvature-of-tubular-hypersurfaces} we determine the mean curvature in \(g_\epsilon\) of tubular hypersurfaces around \(S\).
We find constants \(\Cinner\), \(\Couter\) and \(\Router\) such that the mean curvature is negative for tubular hypersurfaces with radius smaller than \(\Cinner \epsilon\) and positive for tubular hypersurfaces with radius between \(\Couter \epsilon\) and \(\Router\).

In \prettyref{sec:the-location-of-outer-area-minimizing-stationary-hypersurfaces} we use these tubular hypersurfaces, the maximum principle of Solomon and White, and a convergence argument to determine the location of outer area minimizing stationary hypersurfaces.
The conclusion is that any such hypersurface must be located between the tubular hypersurfaces of radii \(\Cinner \epsilon\) and \(\Couter \epsilon\).

In \prettyref{sec:tubularity-of-outer-area-minimizing-stationary-hypersurfaces} we apply a convergence argument to prove that the outer area minimizing stationary hypersurfaces are graphs of smooth functions on \(UNS\) in normal coordinates for \(S\).

Finally, in \prettyref{sec:proof-of-the-main-theorem}, we combine the previous results and prove that there is a unique outer area minimizing stationary hypersurface.
This hypersurface is then shown to coincide with the outermost apparent horizon, which proves \prettyref{thm:main-theorem}.

\subsection*{Acknowledgements}

We want to thank
John Andersson,
Lars Andersson,
Alessandro Carlotto,
Michael Eichmair,
Christos Mantoulidis,
Anna Sakovich, and
Fernando Schwartz
for helpful comments and discussions related to the work in this paper.

The many insightful remarks we received from an anonymous referee has helped us improve the paper in ways we would not have been able to by ourselves.
We are deeply thankful for the work and effort made by this person.

Much of this work was done when the authors visited the IHP in Paris for the fall 2015 program on mathematical general relativity. We want to express our gratitude to the institute for its hospitality.

\section{Rescaled metrics and their convergence}

\label{sec:rescaled-metrics-and-their-convergence}

\newcommand{\xipar}{\xi^\parallel}
\newcommand{\xikpar}{\xi_k^\parallel}
\newcommand{\etapar}{\eta^\parallel}
\newcommand{\hatetapar}{\hat{\eta}^\parallel}

The purpose of this section is to prove that, after suitable rescaling and change of coordinates, the metrics \(g_\epsilon\) converge as \(\epsilon\) tends to zero.
This is done in \prettyref{cor:metric-convergence}.
The computations needed are mainly contained in \prettyref{lem:integral-convergence-and-bound}, where rescalings of the integral in the definition of \(\conffact{\epsilon}\) are studied.

For \((x_\infty, x, \epsilon) \in \R^n \times \R^n \times \R^+\) define the map \(\transportandrescale{x_\infty}{x}{\epsilon} \colon T_{x_\infty}\R^n \to \R^n\) to be the composition \(T_{x_\infty}\R^n \to T_x\R^n \to \R^n\) of the isomorphism \(T_{x_\infty}\R^n \to T_x\R^n\) of tangent spaces given by parallel transport with respect to the Euclidean metric \(\delta\), and the map \(\zeta \mapsto \exp^{\delta}(\epsilon \zeta)\).
Here \(\exp^\delta\) is the exponential map for the Euclidean metric \(\delta\).
This map is used to focus attention on an \(\epsilon\)-neighborhood of the point \(x\); the pullback by \(\transportandrescale{x_\infty}{x}{\epsilon}\) of the ball of radius \(\epsilon\) around \(x\) is the ball of radius \(1\) around the origin in \(T_{x_\infty}\R^n\).

In the following it is helpful to keep in mind that \(\transportandrescale{x_\infty}{x}{\epsilon}(\zeta)\) is the same as \(x + \epsilon \zeta\) after natural identifications of points and vectors.

\begin{lem}\label{lem:integral-convergence-and-bound}
Fix \(\gamma > m\) and \(\beta_1, \beta_2 > 0\). 
Let \(x_k \in S\) and \(\epsilon_k > 0\) be sequences with \(x_k \to x_\infty\) and \(\epsilon_k \to 0\).

\begin{enumerate}[label=\emph{\textbf{(\roman*)}},ref=(\roman*)]
\item \label{enum:integral-bound}
There are constants \(R\) and \(C\) depending on \((S, x_\infty, \gamma, \beta_1, \beta_2)\) such that if \(k\) is such that \(|x_k - x_\infty| + \epsilon_k \beta_1 \leq R/2\), and \(\zeta \in T_{x_\infty}\R^n\) satisfies \(|\zeta| \leq \beta_1\) and \(\dist^\delta(\zeta, T_{x_\infty} S) \geq \beta_2\) then
\[
\epsilon_k^{\gamma - m} \int_S |\transportandrescale{x_\infty}{x_k}{\epsilon_k}(\zeta) - y|^{-\gamma} \, dy
<
C.
\]
\item \label{enum:integral-convergence}
For all \(\zeta \notin T_{x_\infty} S\) it holds that
\[
\lim_{k \to \infty} \epsilon_k^{\gamma - m} \int_S |\transportandrescale{x_\infty}{x_k}{\epsilon_k}(\zeta) - y|^{-\gamma} \, dy
=
\int_{T_{x_\infty} S} |\zeta - \eta|^{-\gamma} \, d\eta.
\]
\end{enumerate}
In other words, the sequence of functions
\[
F_k(\zeta) \coloneqq
\epsilon_k^{\gamma - m} \int_S |\transportandrescale{x_\infty}{x_k}{\epsilon_k}(\zeta) - y|^{-\gamma} \, dy
\]
is uniformly bounded for \(\zeta \in T_{x_\infty}\R^n\) with \(|\zeta| \leq \beta_1\) and \(\dist^\delta(\zeta, T_{x_\infty} S) \geq \beta_2\), and it converges pointwise to the function
\[
F_\infty(\zeta)
\coloneqq
\int_{T_{x_\infty}S} |\zeta - \eta|^{-\gamma} \, d\eta
\]
on \(T_{x_\infty} \R^n \setminus T_{x_\infty} S\).
\end{lem}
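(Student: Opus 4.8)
The plan is to reduce both parts to an estimate for the potentials $\int_{S_k}|\zeta - \eta|^{-\gamma}\,d\eta$ of the rescaled submanifolds $S_k \coloneqq \epsilon_k^{-1}(S - x_k)$. Since $\transportandrescale{x_\infty}{x_k}{\epsilon_k}(\zeta)$ equals $x_k + \epsilon_k\zeta$ after the natural identifications, the change of variables $y = x_k + \epsilon_k\eta$, under which the $m$-dimensional Hausdorff measure on $S$ pulls back to $\epsilon_k^{m}$ times that on $S_k$, gives
\[
F_k(\zeta) = \int_{S_k} |\zeta - \eta|^{-\gamma}\,d\eta,
\]
and correspondingly $F_\infty(\zeta) = \int_{T_{x_\infty}S} |\zeta - \eta|^{-\gamma}\,d\eta$ is the analogous potential of the limiting $m$-plane (finite precisely because $\gamma > m$). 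Two facts about the $S_k$ do the work. First, a uniform Ahlfors-type bound $\HmE{m}(S_k \cap B_T(0)) \leq C_2 T^m$, valid for every $T > 0$ and every large $k$: one splits into the regime $\epsilon_k T \leq R_0$, where $S$ is, uniformly over its points, a graph of bounded slope over its tangent plane so that $\HmE{m}(S \cap B_r(x)) \leq C_1 r^m$ for $r \leq R_0$, and the regime $\epsilon_k T > R_0$, where one uses only $\HmE{m}(S_k \cap B_T(0)) \leq \epsilon_k^{-m}\HmE{m}(S)$ together with $\epsilon_k^{-1} < T/R_0$. Second, on any fixed ball $B_\rho(0)$ and for $k$ large, $S_k \cap B_\rho(0)$ is the graph over $T_{x_\infty}S$ of a map $\psi_k$ with $\psi_k \to 0$ and $D\psi_k \to 0$ uniformly: writing $S$ near $x_\infty$ as a graph $\phi$ over $T_{x_\infty}S$ with $\phi(0) = 0$ and $D\phi(0) = 0$, and $x_k = x_\infty + (v_k, \phi(v_k))$ with $v_k \to 0$, one has $\psi_k(w) = \epsilon_k^{-1}\bigl(\phi(v_k + \epsilon_k w) - \phi(v_k)\bigr)$, so that $|D\psi_k(w)| = |D\phi(v_k + \epsilon_k w)| \leq K(|v_k| + \epsilon_k|w|)$ and $|\psi_k(w)| \leq K|w|(|v_k| + \epsilon_k|w|)$ by a first-order Taylor estimate with a bound $K$ on $D^2\phi$.

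For \ref{enum:integral-bound}, take $R$ to be the radius of such a graph chart around $x_\infty$ and split $S$ into $S \cap B_R(x_k)$ and $S \setminus B_R(x_k)$. On the outer piece $|x_k + \epsilon_k\zeta - y| \geq R - \epsilon_k\beta_1 \geq R/2$ by the hypothesis on $k$, so $\epsilon_k^{\gamma - m}$ times that integral is at most $\epsilon_k^{\gamma-m}(R/2)^{-\gamma}\HmE{m}(S)$, which is bounded since $\gamma > m$ and $\epsilon_k \leq R/(2\beta_1)$. On the inner piece the change of variables yields $\int_{S_k \cap B_{R/\epsilon_k}(0)} |\zeta - \eta|^{-\gamma}\,d\eta$, which we split once more at $|\eta| = 2\beta_1$. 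For $|\eta| \geq 2\beta_1$ one has $|\zeta - \eta| \geq |\eta|/2$, and a dyadic decomposition together with the Ahlfors bound estimates the contribution by a convergent geometric series (again using $\gamma > m$). For $|\eta| \leq 2\beta_1$ the graph description keeps $S_k \cap B_{2\beta_1}(0)$ within distance $\delta_k \to 0$ of $T_{x_\infty}S$, so $|\zeta - \eta| \geq \beta_2 - \delta_k \geq \beta_2/2$ for $k$ large, and the Ahlfors bound gives $\HmE{m}(S_k \cap B_{2\beta_1}(0)) \leq C_2(2\beta_1)^m$. Summing the contributions produces the constant $C$, and one checks along the way that it depends only on $(S, x_\infty, \gamma, \beta_1, \beta_2)$.

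For \ref{enum:integral-convergence}, fix $\zeta \notin T_{x_\infty}S$ and put $\beta \coloneqq \dist^\delta(\zeta, T_{x_\infty}S) > 0$. Given $\varepsilon > 0$, pick $R \geq 2|\zeta|$ so large that, by the Ahlfors bound (which the limiting $m$-plane also satisfies) and the same dyadic tail estimate, both $\int_{S_k \setminus B_R(0)} |\zeta - \eta|^{-\gamma}\,d\eta$ (uniformly in large $k$) and $\int_{T_{x_\infty}S \setminus B_R(0)} |\zeta - \eta|^{-\gamma}\,d\eta$ are at most $\varepsilon/3$. It then suffices to show $\int_{S_k \cap B_R(0)} |\zeta - \eta|^{-\gamma}\,d\eta \to \int_{T_{x_\infty}S \cap B_R(0)} |\zeta - \eta|^{-\gamma}\,d\eta$; parametrizing $S_k \cap B_R(0)$ by the graph of $\psi_k$ over an open subset of $T_{x_\infty}S$ converging to $T_{x_\infty}S \cap B_R(0)$, the area element tends to $1$ uniformly and $|\zeta - (w + \psi_k(w))|^{-\gamma} \to |\zeta - w|^{-\gamma}$ uniformly because $|\zeta - w| \geq \beta > 0$ forbids any singularity, so these integrals converge by uniform convergence on a domain of bounded measure. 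Combining the three estimates gives $F_k(\zeta) \to F_\infty(\zeta)$.

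The main obstacle is the clash of scales in part \ref{enum:integral-bound}: after rescaling, the relevant set is $S_k \cap B_{R/\epsilon_k}(0)$, a small graph on a ball whose radius blows up, so it cannot be treated as a graph throughout. The point is to isolate a fixed-size ball around the origin, where the graph description with $\psi_k \to 0$ both applies and keeps $\eta$ close to the plane and hence away from $\zeta$, and to control the complementary large-scale region using nothing but the crude Ahlfors bound together with $\gamma > m$. Establishing that Ahlfors bound uniformly in $k$ across both the microscopic ($\epsilon_k T$ small) and macroscopic ($\epsilon_k T$ large) regimes, and tracking the dependence of all constants on $(S, x_\infty, \gamma, \beta_1, \beta_2)$, is the part requiring the most care.
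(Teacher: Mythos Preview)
Your argument is correct and follows the same overall plan as the paper---split the integral into a far piece controlled crudely by compactness of \(S\) and a near piece handled through a graph parametrization over \(T_{x_\infty}S\)---but the technical execution differs in two places. For the ``medium'' range (large \(|\eta|\) still inside the chart) the paper builds an explicit integrable majorant \(\Phi(\etapar)\) using the Lipschitz constant of the graph function and then, for part~\ref{enum:integral-convergence}, feeds \(\Phi\) into the dominated convergence theorem. You instead prove a uniform Ahlfors bound \(\HmE{m}(S_k\cap B_T(0))\le C_2T^m\) and sum dyadically, and for convergence you run an \(\varepsilon/3\) argument (uniform tail plus uniform convergence on a fixed ball). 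Your route is a bit more geometric and reusable---the Ahlfors bound packages all the scale interaction in one place---while the paper's explicit \(\Phi\) makes the DCT step immediate and ties parts~\ref{enum:integral-bound} and~\ref{enum:integral-convergence} together with a single dominating function.

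One small point to tighten in part~\ref{enum:integral-bound}: you take \(R\) to be the radius of a graph chart and then, in the \(|\eta|\le 2\beta_1\) case, invoke \(\delta_k\le\beta_2/2\) ``for \(k\) large''. The statement, however, asks for the bound to hold for every \(k\) satisfying \(|x_k-x_\infty|+\epsilon_k\beta_1\le R/2\), not just eventually. Your own Taylor estimate \(|\psi_k(w)|\le K|w|(|v_k|+\epsilon_k|w|)\) gives \(\delta_k\le 4K\beta_1(|x_k-x_\infty|+\epsilon_k\beta_1)\), so shrinking \(R\) to at most \(\beta_2/(4K\beta_1)\) makes the hypothesis itself force \(\delta_k\le\beta_2/2\); this is exactly how the paper proceeds, choosing \(R\) so that the graph has Lipschitz constant below \(\beta_2/(4\beta_1)\).
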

\begin{proof}
Let \(\pi \colon T_{x_\infty} \R^n \to T_{x_\infty} S\) denote the orthogonal projection and recall that \(N_{x_\infty}S\) denotes the linear subspace of \(T_{x_\infty} \R^n\) orthogonal to \(T_{x_\infty}S\).
Choose \(R > 0\) such that there is an open neighborhood \(A \subset T_{x_\infty} S\) of \(0\), and a smooth function \(\sigma \colon A \to N_{x_\infty} S\) with
\[
S \cap B_R(x_\infty)
=
\exp^\delta(\graph \sigma)
\]
and such that \(\sigma\) can be extended smoothly to a neighborhood of \(\overline A\), with Lipschitz constant smaller than \(\beta_2/(4\beta_1)\).
We will consider integrals over \(S \setminus B_R(x_\infty)\) and \(S \cap B_R(x_\infty)\) separately.

\stepheader{Part \ref{enum:integral-bound}: Uniform boundedness}
Note that
\[\begin{split}
|\transportandrescale{x_\infty}{x_k}{\epsilon_k}(\zeta) - x_\infty|
&\leq
|x_k - x_\infty| + \epsilon_k |\zeta| \\
&\leq
|x_k - x_\infty| + \epsilon_k \beta_1.
\end{split}\]
Let \(k\) be such that \(|x_k - x_\infty| + \epsilon_k \beta_1 \leq R/2\).
Then
\[\begin{split}
|\transportandrescale{x_\infty}{x_k}{\epsilon_k}(\zeta) - y| 
&\geq
|y - x_\infty| - |\transportandrescale{x_\infty}{x_k}{\epsilon_k}(\zeta) - x_{\infty}|
\\
&\geq 
R - R/2 
\\
&= 
R/2
\end{split}
\]
for \(y \in S \setminus B_R(x_\infty) \), so 
\[\begin{split}
\int_{S \setminus B_R(x_\infty)} |\transportandrescale{x_\infty}{x_k}{\epsilon_k}(\zeta) - y|^{-\gamma} \, dy
&\leq
\int_{S \setminus B_R(x_\infty)} (R/2)^{-\gamma} \, dy \\
&\leq
(R/2)^{-\gamma} \HmE{m}(S).
\end{split}\]
Therefore
\begin{equation}\label{eq:integral-tail}
\epsilon_k^{\gamma - m} \int_{S \setminus B_R(x_\infty)} |\transportandrescale{x_\infty}{x_k}{\epsilon_k}(\zeta) - y|^{-\gamma} \, dy \to 0
\end{equation}
as \(k \to \infty\), since \(\gamma > m\) and \(\epsilon_k \to 0\).

We parametrize \(S \cap B_R(x_\infty)\) by \(\hatetapar \mapsto \exp^\delta(\hatetapar + \sigma(\hatetapar))\) where \(\hatetapar \in A\).
The Jacobian of this map is represented by the matrix
\[\begin{bmatrix}
I\\
J\sigma(\hatetapar)
\end{bmatrix}.\]
With this parametrization, the integral
\[
\epsilon_k^{\gamma - m} \int_{S \cap B_R(x_\infty)} |\transportandrescale{x_\infty}{x_k}{\epsilon_k}(\zeta) - y|^{-\gamma} \, dy
\]
can be expressed as
\[
\epsilon_k^{\gamma - m} \int_A \left|\transportandrescale{x_\infty}{x_k}{\epsilon_k}(\zeta) - \exp^\delta\left(\hatetapar + \sigma(\hatetapar)\right)\right|^{-\gamma} \mathcal J(\hatetapar)\, d\hatetapar
\]
where
\[
\mathcal J(\hatetapar)
=
\sqrt{\det\left(\begin{bmatrix}
I\\
J\sigma(\hatetapar)
\end{bmatrix}^T
\begin{bmatrix}
I\\
J\sigma(\hatetapar)
\end{bmatrix}\right)}
=
\sqrt{\det\left(I + J\sigma(\hatetapar)^T J\sigma(\hatetapar) \right)}.
\]
If \(x_k \in S \cap B_R(x_\infty)\) then \(x_k = \exp^\delta(\xikpar + \sigma(\xikpar))\) for some \(\xikpar \in A\).
It holds that
\[
\transportandrescale{x_\infty}{x_k}{\epsilon_k}(\zeta)
=
\exp^\delta\left(\xikpar + \sigma(\xikpar) + \epsilon_k \zeta\right),
\]
so
\[ \begin{split}
&\epsilon_k^{\gamma - m} \int_{S \cap B_R(x_\infty)} |\transportandrescale{x_\infty}{x_k}{\epsilon_k}(\zeta) - y|^{-\gamma} \, dy \\
&\qquad =
\epsilon_k^{\gamma - m} \int_A \left|\exp^\delta\left(\xikpar + \sigma(\xikpar) + \epsilon_k \zeta\right) - \exp^\delta\left(\hatetapar + \sigma(\hatetapar)\right)\right|^{-\gamma} \mathcal J(\hatetapar)\, d\hatetapar \\
&\qquad =
\epsilon_k^{\gamma - m} \int_A \left|\xikpar + \sigma(\xikpar) + \epsilon_k \zeta - \hatetapar + \sigma(\hatetapar)\right|^{-\gamma} \mathcal J(\hatetapar)\, d\hatetapar \\
&\qquad =
\epsilon_k^{-m} \int_A \left|\zeta - \frac{\hatetapar - \xikpar}{\epsilon_k} - \frac{\sigma(\hatetapar) - \sigma(\xikpar)}{\epsilon_k}\right|^{-\gamma} \mathcal J(\hatetapar)\, d\hatetapar.
\end{split} \]
By a change of variables 
\(\hatetapar = \xikpar + \epsilon_k \etapar\)
we see that
\begin{equation}\label{eq:rescaled-parametrization}
\begin{split}
&\epsilon_k^{\gamma - m} \int_{S \cap B_R(x_\infty)} |\transportandrescale{x_\infty}{x_k}{\epsilon_k}(\zeta) - y|^{-\gamma} \, dy \\
&\qquad=
\int_{(A - \xikpar)/\epsilon_k} \left|\zeta - \etapar - \frac{\sigma(\xikpar + \epsilon_k \etapar) - \sigma(\xikpar)}{\epsilon_k}\right|^{-\gamma} \mathcal J(\xikpar + \epsilon_k \etapar)\, d\etapar.
\end{split}
\end{equation}
We will now bound the integrand by an integrable function of \(\etapar\).
Let \(L\) denote the Lipschitz constant of \(\sigma\) and recall that
\[
L < \beta_2/(4\beta_1).
\]
Then
\begin{equation}\label{eq:the-inequality-involving-L}
\left| \frac{\sigma(\xikpar + \epsilon_k \etapar) - \sigma(\xikpar)}{\epsilon_k} \right|
\leq
\frac{L \epsilon_k |\etapar|}{\epsilon_k}
=
L |\etapar|
<
\frac{\beta_2}{4 \beta_1} |\etapar|.
\end{equation}
By orthogonal decomposition of \(\zeta \in T_{x_\infty} S \oplus N_{x_\infty} S\) we get
\begin{equation}\label{eq:orthogonal-decomposition}
\begin{split}
&\left| \zeta - \etapar - \frac{\sigma(\xikpar + \epsilon_k \etapar) - \sigma(\xikpar)}{\epsilon_k} \right|^2
\\
&\qquad=
|\pi(\zeta) - \etapar|^2
+
\left| (\zeta - \pi(\zeta)) - \frac{\sigma(\xikpar + \epsilon_k \etapar) - \sigma(\xikpar)}{\epsilon_k} \right|^2.
\end{split}
\end{equation}
By the triangle inequality and \eqref{eq:the-inequality-involving-L} we further have
\[\begin{split}
\left| (\zeta - \pi(\zeta)) - \frac{\sigma(\xikpar + \epsilon_k \etapar) - \sigma(\xikpar)}{\epsilon_k} \right|
&\geq
|\zeta - \pi(\zeta)| - \left| \frac{\sigma(\xikpar + \epsilon_k \etapar) - \sigma(\xikpar)}{\epsilon_k} \right|
\\
&\geq
\dist^\delta(\zeta, T_{x_\infty} S) - \frac{\beta_2}{4 \beta_1} |\etapar|
\\
&\geq
\beta_2 - \frac{\beta_2}{4 \beta_1} |\etapar|
\end{split}\]
so if \(|\etapar| < 2\beta_1\) then
\[
\left| (\zeta - \pi(\zeta)) - \frac{\sigma(\xikpar + \epsilon_k \etapar) - \sigma(\xikpar)}{\epsilon_k} \right|
>
\frac{\beta_2}{2}.
\]
This gives a good bound for small \(|\etapar|\).
For large \(|\etapar|\) we will use the following bound instead.
The orthogonal decomposition \eqref{eq:orthogonal-decomposition} tells us that
\[\begin{split}
\left| \zeta - \etapar - \frac{\sigma(\xikpar + \epsilon_k \etapar) - \sigma(\xikpar)}{\epsilon_k} \right|
&\geq
|\pi(\zeta) - \etapar|
\\
&\geq
|\etapar| - |\pi(\zeta)|
\\
&\geq
|\etapar| - |\zeta|
\\
&\geq
|\etapar| - \beta_1.
\end{split}\]
Finally note that \(\mathcal J(\hatetapar)\) is bounded by some constant \(C' = C'(S, x_\infty, \beta_1, \beta_2)\) on \(A\) since \(\sigma\) can be extended smoothly beyond \(\overline A\).
Hence
\[
\left| \frac{\zeta - \etapar - (\sigma(\xikpar + \epsilon_k \etapar) - \sigma(\xikpar))}{\epsilon_k} \right|^{-\gamma} \mathcal J(\xikpar + \epsilon_k \etapar)
\]
is bounded from above by the function
\[
\Phi(\etapar)
\coloneqq
\begin{cases}
C'\left(\frac{\beta_2}{2}\right)^{-\gamma} & \text{ if } |\etapar| < 2\beta_1,\\
C'(|\etapar| - \beta_1)^{-\gamma} & \text{ if } |\etapar| \geq 2\beta_1,
\end{cases}
\]
which is integrable on all of \(T_{x_\infty} S\) since \(\gamma > m\).
This means that we have now shown that
\[\begin{split}
&\epsilon_k^{\gamma - m} \int_S |\transportandrescale{x_\infty}{x_k}{\epsilon_k}(\zeta) - y|^{-\gamma} \, dy
\\
&\qquad
\leq
(R/2)^{-m} \HmE{m}(S) \\
&\qquad \qquad
+
\int_{(A - \xikpar)/\epsilon_k} \left|\zeta - \etapar - \frac{\sigma(\xikpar + \epsilon_k \etapar) - \sigma(\xikpar)}{\epsilon_k} \right|^{-\gamma} \mathcal J(\xikpar + \epsilon_k \etapar)\, d\etapar
\\
&\qquad
\leq
(R/2)^{-m} \HmE{m}(S)
+
\int_{(A - \xikpar)/\epsilon_k} \Phi(\etapar) \, d\etapar
\\
&\qquad
\leq
(R/2)^{-m} \HmE{m}(S)
+
\int_{T_{x_\infty} S} \Phi(\etapar) \, d\etapar
\end{split}\]
for \(x_k \in \R^n\) and \(\epsilon_k > 0\) such that \(|x_k - x_\infty| + \epsilon_k \beta_1 \leq R/2\).
Choosing
\[
C(S, x_\infty, \gamma, \beta_1, \beta_2)
\coloneqq
(R/2)^{-m} \HmE{m}(S) + \int_{T_{x_\infty} S} \Phi(\etapar) \, d\etapar
\]
completes the proof of the boundedness statement in part \ref{enum:integral-bound}.

\stepheader{Part \ref{enum:integral-convergence}: Pointwise convergence}
We will now apply the dominated convergence theorem to pass to the limit in \eqref{eq:rescaled-parametrization}.
As preparation we first note that
\[
\lim_{k \to \infty} 
\frac{\sigma(\xikpar + \epsilon_k \etapar) - \sigma(\xikpar)}{\epsilon_k}
=
\lim_{\substack{\xikpar \to x_\infty\\ \epsilon_k \to 0}} 
\frac{\sigma(\xikpar + \epsilon_k \etapar) - \sigma(\xikpar)}{\epsilon_k}
= 0
\]
since \(J\sigma(x_\infty) = 0\).
For the same reason,
\[
\lim_{k \to \infty} \mathcal J(\xikpar + \epsilon_k \etapar)
= \lim_{\substack{\xikpar \to x_\infty\\ \epsilon_k \to 0}} 
\sqrt{\det(I + J\sigma(\xikpar + \epsilon_k \etapar)^T J\sigma(\xikpar + \epsilon_k \etapar))}
= 1.
\]
Let \(\chi_{(A - \xikpar)/\epsilon_k}\) denote the characteristic function of the set \((A - \xikpar)/\epsilon_k\). 
Using the existence of the integrable function \(\Phi\) we can apply the dominated convergence theorem to see that
\[ \begin{split}
&\lim_{k \to \infty} 
\int_{(A - \xikpar)/\epsilon_k} 
\left| \zeta - \etapar - \frac{\sigma(\xikpar + \epsilon_k \etapar) - \sigma(\xikpar)}{\epsilon_k} \right|^{-\gamma} 
\mathcal J(\xikpar + \epsilon_k \etapar) \, d\etapar \\
&\qquad =
\lim_{k \to \infty} 
\int_{T_{x_\infty}S} 
\chi_{\frac{A - \xikpar}{\epsilon_k}}(\etapar) 
\left| \zeta - \etapar - \frac{\sigma(\xikpar + \epsilon_k \etapar) - \sigma(\xikpar)}{\epsilon_k} \right|^{-\gamma} \\
&\qquad\qquad\qquad\qquad\qquad\qquad\qquad\qquad\qquad\qquad\qquad\qquad \cdot \mathcal J(\xikpar + \epsilon_k \etapar)\, d\etapar \\
&\qquad =
\int_{T_{x_\infty}S} 
\lim_{k \to \infty} 
\left| \zeta - \etapar - \frac{\sigma(\xikpar + \epsilon_k \etapar) - \sigma(\xikpar)}{\epsilon_k} \right|^{-\gamma} 
\mathcal J(\xikpar + \epsilon_k \etapar)\, d\etapar \\
&\qquad =
\int_{T_{x_\infty}S} |\zeta - \etapar|^{-\gamma} \, d\etapar.
\end{split} \]
This together with \eqref{eq:rescaled-parametrization} means that we have now shown that
\[
\lim_{k \to \infty} \epsilon_k^{\gamma - m} 
\int_{S \cap B_R(x_\infty)} |\transportandrescale{x_\infty}{x_k}{\epsilon_k}(\zeta) - y|^{-\gamma} \, dy
=
\int_{T_{x_\infty}S} |\zeta - \etapar|^{-\gamma} \, d\etapar,
\]
which together with \eqref{eq:integral-tail} completes the proof of part \ref{enum:integral-convergence}.
\end{proof}

In the next proposition we prove that the functions \(F_k\) from \prettyref{lem:integral-convergence-and-bound} converge in \(C^r(K)\) for compact domains \(K\).

\begin{prop}\label{prop:uniform-integral-convergence}
Fix \(\gamma > m\). 
Let \(x_k \in S\) and \(\epsilon_k > 0\) be sequences with \(x_k \to x_\infty\) and \(\epsilon_k \to 0\).
Let \(K \subset T_{x_\infty} \R^n \setminus T_{x_\infty}S\) be compact.
Then for \(r \in \N\) the functions \(F_k\) converge to \(F_\infty\) in \(C^r(K)\)  as \(k \to \infty\).
\end{prop}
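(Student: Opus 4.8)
The plan is to bootstrap \prettyref{lem:integral-convergence-and-bound}. Pointwise convergence on \(K\) is already available from part \ref{enum:integral-convergence}, so by the Arzel\`{a}--Ascoli theorem it suffices to bound the derivative sequences \(\{\partial_\zeta^\alpha F_k\}_k\) uniformly on a compact neighborhood \(\tilde K\) of \(K\), for all \(|\alpha| \leq r+1\). Thus I would first fix a compact \(\tilde K\) with \(K \subset \interior \tilde K\) and \(\tilde K \subset T_{x_\infty}\R^n \setminus T_{x_\infty}S\), and put \(\beta_1 \coloneqq \sup_{\tilde K}|\zeta|\) and \(\beta_2 \coloneqq \inf_{\tilde K}\dist^\delta(\zeta, T_{x_\infty}S) > 0\), so that every \(\zeta \in \tilde K\) lies within the scope of part \ref{enum:integral-bound}. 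For \(k\) large (so that \(x_k \in B_R(x_\infty)\) and \(|x_k - x_\infty| + \epsilon_k\beta_1 \leq R/2\), using \(x_k \to x_\infty\) and \(\epsilon_k \to 0\)), the estimates in the proof of \prettyref{lem:integral-convergence-and-bound} show that \(\transportandrescale{x_\infty}{x_k}{\epsilon_k}(\zeta)\) stays at distance bounded below by a positive multiple of \(\epsilon_k\) from \(S\), uniformly for \(\zeta \in \tilde K\); since \(S\) has finite measure and the integrand is then smooth in \(\zeta\) with all \(\zeta\)-derivatives bounded in \(y \in S\) near any point of \(\tilde K\), we may differentiate \(F_k\) under the integral sign.

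The key estimate is for these derivatives. With \(h(w) \coloneqq |w|^{-\gamma}\), the derivative \(\partial^\alpha h\) is smooth on \(\R^n\setminus\{0\}\) and homogeneous of degree \(-\gamma - |\alpha|\), hence \(|\partial^\alpha h(w)| \leq C_{|\alpha|}|w|^{-\gamma-|\alpha|}\); recalling that \(\transportandrescale{x_\infty}{x_k}{\epsilon_k}(\zeta) = x_k + \epsilon_k\zeta\), the chain rule contributes a factor \(\epsilon_k^{|\alpha|}\), so that
\[
\bigl|\partial_\zeta^\alpha F_k(\zeta)\bigr|
\leq
C_{|\alpha|}\,\epsilon_k^{(\gamma+|\alpha|)-m}\int_S \bigl|\transportandrescale{x_\infty}{x_k}{\epsilon_k}(\zeta) - y\bigr|^{-(\gamma+|\alpha|)}\,dy .
\]
The right-hand side is \(C_{|\alpha|}\) times the quantity estimated in \prettyref{lem:integral-convergence-and-bound} with \(\gamma\) replaced by \(\gamma + |\alpha| > m\); applying part \ref{enum:integral-bound} of that lemma with the \(\beta_i\) chosen above bounds it by a constant independent of \(k\) and of \(\zeta \in \tilde K\). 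Carrying this out for all \(|\alpha| \leq r+1\) shows that \(\{F_k\}\) is bounded in \(C^{r+1}(\tilde K)\) for \(k\) large.

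Finally, the uniform bound on the \((r+1)\)-st derivatives on \(\tilde K\) yields equicontinuity of the families \(\{\partial^\alpha F_k\}_k\) on \(K\) for \(|\alpha| \leq r\), so by Arzel\`{a}--Ascoli every subsequence of \(\{F_k\}\) has a further subsequence converging in \(C^r(K)\); by the pointwise convergence in part \ref{enum:integral-convergence} any such limit equals \(F_\infty\), and therefore \(F_k \to F_\infty\) in \(C^r(K)\). I expect the only real work to be the derivative estimate above together with the bookkeeping needed to legitimately differentiate under the integral sign; once the estimate is reduced to \prettyref{lem:integral-convergence-and-bound} at the larger exponent, the remainder is a routine compactness argument requiring no new analytic input.
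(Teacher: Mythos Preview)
Your proposal is correct and follows essentially the same route as the paper: differentiate under the integral sign, use the homogeneity bound \(|\partial^\alpha|\cdot|^{-\gamma}|\le C_{|\alpha|}|\cdot|^{-\gamma-|\alpha|}\) together with the chain-rule factor \(\epsilon_k^{|\alpha|}\) to reduce to \prettyref{lem:integral-convergence-and-bound} at exponent \(\gamma+|\alpha|\), obtain uniform \(C^{r+1}\) bounds, and then upgrade pointwise convergence to \(C^r(K)\) convergence. The only differences are cosmetic: the paper works directly on \(K\) rather than an auxiliary \(\tilde K\), and it compresses your Arzel\`a--Ascoli/subsequence argument into the single remark that uniform \(C^{r+1}(K)\) bounds plus pointwise convergence give \(C^r(K)\) convergence.
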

\begin{proof}
Fix \(r \in \N\).
We will apply \prettyref{lem:integral-convergence-and-bound}.
Let
\[
\beta_1
\coloneqq
\max_{\zeta \in K} |\zeta|,
\]
\[
\beta_2
\coloneqq
\min_{\zeta \in K} \dist^\delta(\zeta, T_{x_\infty} S).
\]
For each \(l \in \N\) let
\[
D_l \coloneqq \prod_{i = 0}^{l-1} (\gamma + i).
\]
We have
\[
\left| \partderiv{}{x^{i_1}} \cdots \partderiv{}{x^{i_l}} |x - y|^{-\gamma} \right|
\leq
D_l |x - y|^{-(\gamma + l)}
\]
so
\[ \begin{split}
&\left| \partderiv{}{\zeta^{i_1}} \cdots \partderiv{}{\zeta^{i_l}} \epsilon_k^{\gamma - m} |\transportandrescale{x_\infty}{x_k}{\epsilon_k}(\zeta) - y|^{-\gamma} \right| \\
&\qquad \qquad \qquad \qquad \qquad \qquad
\leq
D_l \epsilon_k^{\gamma + l - m} |\transportandrescale{x_\infty}{x_k}{\epsilon_k}(\zeta) - y|^{-(\gamma + l)} .
\end{split}\]
By applying \prettyref{lem:integral-convergence-and-bound} for all exponents \(\{\gamma + l \mid 0 \leq l \leq r+1\}\) we get constants \(C_l\) such that
\[
D_l \epsilon_k^{\gamma + l - m}  \int_S |\transportandrescale{x_\infty}{x_k}{\epsilon_k}(\zeta) - y|^{-(\gamma + l)} \, dy
\leq
C_l.
\]
This means that
\[\begin{split}
\partderiv{}{\zeta^{i_1}} \cdots \partderiv{}{\zeta^{i_l}} F_k(\zeta)
&=
\partderiv{}{\zeta^{i_1}} \cdots \partderiv{}{\zeta^{i_l}} \epsilon_k^{\gamma - m} \int_S |\transportandrescale{x_\infty}{x_k}{\epsilon_k}(\zeta) - y|^{-\gamma} \, dy
\\
&=
\epsilon_k^{\gamma - m} \int_S \partderiv{}{\zeta^{i_1}} \cdots \partderiv{}{\zeta^{i_l}} |\transportandrescale{x_\infty}{x_k}{\epsilon_k}(\zeta) - y|^{-\gamma} \, dy
\\
&\leq
D_l \epsilon_k^{\gamma + l - m} \int_S |\transportandrescale{x_\infty}{x_k}{\epsilon_k}(\zeta) - y|^{-(\gamma + l)} \, dy
\\
&\leq
C_l.
\end{split}\]
Hence we have uniform bounds for \(||F_k||_{C^{r+1}(K)}\).

From the uniform bounds on \(||F_k||_{C^{r+1}(K)}\) and the pointwise convergence of \(F_k\) to \(F_\infty\) it follows that \(F_k \to F_\infty\) in \(C^r(K)\).
\end{proof}

As a corollary we can determine what the metrics \(g_{\epsilon_k}\) converge to after appropriate rescaling and change of coordinates.

\begin{cor}\label{cor:metric-convergence}
Let \(K \subset T_{x_\infty} \R^n \setminus T_{x_\infty}S\) be compact.
Let \(x_k \in S\) and \(\epsilon_k > 0\) be sequences with \(x_k \to x_\infty\) and \(\epsilon_k \to 0\).
Then the rescaled pullback metrics \(\epsilon_k^{-2}(\transportandrescale{x_\infty}{x_k}{\epsilon_k})^*(g_{\epsilon_k})\) on \(K\) converge in every \(C^r(K)\) to
\[
\conffact{\infty}^{4/(n - 2)} \delta
\]
where \(\delta\) is the Euclidean metric on \(K \subset  T_{x_\infty}\R^n\) and
\[
\conffact{\infty}(\zeta)
\coloneqq
1 + \int_{T_{x_\infty}S} |\zeta - \eta|^{-(n - 2)} \, d\eta.
\]
\end{cor}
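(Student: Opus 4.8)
Set $\gamma \coloneqq n - 2$, so that $\gamma > m$ by the codimension hypothesis and $\gamma - m = n - m - 2$. The plan is to split the rescaled pullback metric into a conformal factor times the pulled-back Euclidean metric and treat the two factors separately, the conformal factor being handled by \prettyref{prop:uniform-integral-convergence}. First I would record that the map $\transportandrescale{x_\infty}{x_k}{\epsilon_k}$ equals $\zeta \mapsto x_k + \epsilon_k \zeta$ after the natural identifications, so its differential at every point is $\epsilon_k$ times a $\delta$-linear isometry; hence $(\transportandrescale{x_\infty}{x_k}{\epsilon_k})^*\delta = \epsilon_k^2\,\delta$ and therefore $\epsilon_k^{-2}(\transportandrescale{x_\infty}{x_k}{\epsilon_k})^*\delta = \delta$ as tensors on $T_{x_\infty}\R^n$. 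Applying part~\ref{enum:integral-bound} of \prettyref{lem:integral-convergence-and-bound} with $\beta_1 \coloneqq \max_{\zeta\in K}|\zeta|$ and $\beta_2 \coloneqq \min_{\zeta\in K}\dist^\delta(\zeta, T_{x_\infty}S)$, one also sees that for $k$ large the image $\transportandrescale{x_\infty}{x_k}{\epsilon_k}(K)$ stays at positive distance from $S$, so the pullback metric is well defined on $K$ for all large $k$.

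Next I would identify the pulled-back conformal factor. Since $g_{\epsilon_k} = \conffact{\epsilon_k}^{4/(n-2)}\delta$ and pullback commutes with multiplication by a function,
\[
\epsilon_k^{-2}(\transportandrescale{x_\infty}{x_k}{\epsilon_k})^* g_{\epsilon_k}
=
\bigl(\conffact{\epsilon_k}\circ\transportandrescale{x_\infty}{x_k}{\epsilon_k}\bigr)^{4/(n-2)}\,\delta .
\]
By the definition of $\conffact{\epsilon_k}$ and the identity $\transportandrescale{x_\infty}{x_k}{\epsilon_k}(\zeta) = x_k + \epsilon_k\zeta$,
\[
\conffact{\epsilon_k}\bigl(\transportandrescale{x_\infty}{x_k}{\epsilon_k}(\zeta)\bigr)
=
1 + \epsilon_k^{\gamma - m}\int_S \bigl|\transportandrescale{x_\infty}{x_k}{\epsilon_k}(\zeta) - y\bigr|^{-\gamma}\,dy
=
1 + F_k(\zeta),
\]
where $F_k$ is precisely the function of \prettyref{prop:uniform-integral-convergence} for this $\gamma$.

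Now I would invoke \prettyref{prop:uniform-integral-convergence}: for every $r \in \N$ we have $F_k \to F_\infty$ in $C^r(K)$, hence $1 + F_k \to 1 + F_\infty = \conffact{\infty}$ in $C^r(K)$; in particular $\conffact{\infty}$ is finite on $K$. Since $1 + F_k \geq 1$ everywhere and, by the boundedness in part~\ref{enum:integral-bound} of \prettyref{lem:integral-convergence-and-bound}, the functions $1 + F_k$ are bounded above on $K$ by a constant independent of $k$ for $k$ large, the values of $1 + F_k$ and of $\conffact{\infty}$ lie in a fixed compact subinterval of $(0,\infty)$ on which $t \mapsto t^{4/(n-2)}$ is smooth. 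Composition with a fixed smooth function preserves $C^r$ convergence of a sequence of functions whose images lie in a common compact set — the derivatives of $(1+F_k)^{4/(n-2)}$ up to order $r$ are universal polynomials in the derivatives of $1 + F_k$ up to order $r$ with coefficients given by derivatives of $t \mapsto t^{4/(n-2)}$ evaluated at $1 + F_k$, and these converge because $1 + F_k$ converges in $C^r(K)$ with values in a compact set. Hence $(1+F_k)^{4/(n-2)} \to \conffact{\infty}^{4/(n-2)}$ in $C^r(K)$, and multiplying by the fixed tensor $\delta$ gives
\[
\epsilon_k^{-2}(\transportandrescale{x_\infty}{x_k}{\epsilon_k})^* g_{\epsilon_k}
=
(1 + F_k)^{4/(n-2)}\,\delta
\;\longrightarrow\;
\conffact{\infty}^{4/(n-2)}\,\delta
\]
in $C^r(K)$ for every $r$, which is the claim.

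The argument is essentially bookkeeping once \prettyref{prop:uniform-integral-convergence} is in hand; the only step I would expect to require any care is the passage of the $C^r$ convergence of $1 + F_k$ through the nonlinear map $t \mapsto t^{4/(n-2)}$, and for this the key preliminary observation is the two-sided bound keeping the relevant values inside a compact subset of $(0,\infty)$.
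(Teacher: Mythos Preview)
Your proof is correct and follows essentially the same approach as the paper: identify $\conffact{\epsilon_k}\circ\transportandrescale{x_\infty}{x_k}{\epsilon_k} = 1 + F_k$ with $\gamma = n-2$, apply \prettyref{prop:uniform-integral-convergence}, and use $\epsilon_k^{-2}(\transportandrescale{x_\infty}{x_k}{\epsilon_k})^*\delta = \delta$. Your treatment is in fact more careful than the paper's, which simply asserts the conclusion once $C^r$ convergence of the conformal factors is known; you explicitly justify passage through the nonlinearity $t \mapsto t^{4/(n-2)}$ via the two-sided bound on $1 + F_k$.
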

\begin{proof}
With \(\gamma = n-2\) we have \(\conffact{\epsilon_k}(\transportandrescale{x_\infty}{x_k}{\epsilon_k}(\zeta)) = 1 + F_k(\zeta)\) and \(\conffact{\infty}(\zeta) = 1 + F_\infty(\zeta)\).
Since \(n - 2 > m\) it follows from \prettyref{prop:uniform-integral-convergence} that
\[
\conffact{\epsilon_k}(\transportandrescale{x_\infty}{x_k}{\epsilon_k}(\zeta)) \to \conffact{\infty}(\zeta)
\]
in \(C^r(K)\) as \(k \to \infty\).
The corollary follows since \({\epsilon_k^{-2} (\transportandrescale{x_\infty}{x_k}{\epsilon_k})^*(\delta) = \delta}\).
\end{proof}

\section{The mean curvature of tubular hypersurfaces}

\label{sec:the-mean-curvature-of-tubular-hypersurfaces}

By \emph{the tubular hypersurface of radius \(a\) around \(S\)}, denoted \(\Tub{S}{a}\), we mean the image of \(UNS\) under the map \(\zeta \mapsto \exp^\delta(a \zeta)\).
Recall that \(\exp^\delta\) is the exponential map for the Euclidean metric \(\delta\).

We need to understand the behaviour of \(S\) and \(\Tub{S}{a}\) under the rescaling studied in the previous section.
By writing \(S\) as a graph over \(T_{x_\infty} S\) in normal coordinates, we conclude the following.

\begin{lem} \label{lem:convergence-of-S}
Let \(x_k \in S\) and \(\epsilon_k > 0\) be sequences with \(x_k \to x_\infty\) and \(\epsilon_k \to 0\).
Then 
\[
(\transportandrescale{x_\infty}{x_k}{\epsilon_k})^{-1}(S) \to T_{x_\infty} S
\] 
and 
\[
(\transportandrescale{x_\infty}{x_k}{\epsilon_k})^{-1}(\Tub{S}{a\epsilon_k}) \to \Tub{T_{x_\infty} S}{a}
\]
smoothly on compact subsets of \(T_{x_\infty}\R^n\) as \(k \to \infty\).
\end{lem}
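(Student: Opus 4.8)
The plan is to reduce everything to a statement about graphs, since near $x_\infty$ the submanifold $S$ is a graph over $T_{x_\infty}S$ in normal (i.e.\ exponential) coordinates, and the tubular hypersurface $\Tub{S}{a\epsilon_k}$ is obtained from this graph by flowing a distance $a\epsilon_k$ along the Euclidean unit normal. Fix a radius $R>0$ and a smooth map $\sigma\colon A\to N_{x_\infty}S$, defined on a neighbourhood $A\subset T_{x_\infty}S$ of $0$, with $J\sigma(0)=0$, such that $S\cap B_R(x_\infty)=\exp^\delta(\graph\sigma)$, exactly as in the proof of \prettyref{lem:integral-convergence-and-bound}. Under the identification $\transportandrescale{x_\infty}{x_k}{\epsilon_k}(\zeta)=x_k+\epsilon_k\zeta$, writing $x_k=\exp^\delta(\xikpar+\sigma(\xikpar))$, a point $\exp^\delta(\hatetapar+\sigma(\hatetapar))$ of $S$ pulls back to the point with parallel component $(\hatetapar-\xikpar)/\epsilon_k$ and normal component $(\sigma(\hatetapar)-\sigma(\xikpar))/\epsilon_k$. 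Setting $\hatetapar=\xikpar+\epsilon_k\etapar$, this is $\etapar+\bigl(\sigma(\xikpar+\epsilon_k\etapar)-\sigma(\xikpar)\bigr)/\epsilon_k$. So $(\transportandrescale{x_\infty}{x_k}{\epsilon_k})^{-1}(S)$ is, on any fixed compact set, the graph over $T_{x_\infty}S$ of the function
\[
\etapar\ \longmapsto\ \frac{\sigma(\xikpar+\epsilon_k\etapar)-\sigma(\xikpar)}{\epsilon_k}.
\]

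The first key step is to show this function converges to $0$ smoothly on compact sets. Its value tends to $0$ because $J\sigma(x_\infty)=0$ and $\xikpar\to 0$, $\epsilon_k\to 0$ (this is the computation already carried out in Part \ref{enum:integral-convergence} of the proof of \prettyref{lem:integral-convergence-and-bound}); its first derivative in $\etapar$ is $J\sigma(\xikpar+\epsilon_k\etapar)\to J\sigma(x_\infty)=0$; and each higher derivative in $\etapar$ carries a positive power of $\epsilon_k$, namely $\partial^l_{\etapar}\bigl[(\sigma(\xikpar+\epsilon_k\etapar)-\sigma(\xikpar))/\epsilon_k\bigr]=\epsilon_k^{\,l-1}(D^l\sigma)(\xikpar+\epsilon_k\etapar)$, which tends to $0$ for $l\geq 2$ since $D^l\sigma$ is bounded on a neighbourhood of $\overline A$. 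Hence the defining functions converge to $0$ in every $C^r$ on compact subsets, which is exactly the statement $(\transportandrescale{x_\infty}{x_k}{\epsilon_k})^{-1}(S)\to T_{x_\infty}S$ smoothly on compact sets. (Points of $S$ outside $B_R(x_\infty)$ pull back to points of Euclidean norm $\geq (R/2)/\epsilon_k\to\infty$, so they leave every compact set and do not affect the conclusion.)

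For the tubular hypersurface, the second step is to observe that the pullback commutes with the tube construction up to the same rescaling. Concretely, $\Tub{S}{a\epsilon_k}=\{p+a\epsilon_k\,\nu_S(p):p\in S,\ \nu_S(p)\in UN_pS\}$, where $\nu_S(p)$ ranges over Euclidean unit normals of $S$ at $p$; applying $(\transportandrescale{x_\infty}{x_k}{\epsilon_k})^{-1}$, i.e.\ subtracting $x_k$ and dividing by $\epsilon_k$, turns this into $\{q+a\,w: q\in(\transportandrescale{x_\infty}{x_k}{\epsilon_k})^{-1}(S),\ w\in UN_qS_k\}$ where $S_k\coloneqq(\transportandrescale{x_\infty}{x_k}{\epsilon_k})^{-1}(S)$ and $UN_qS_k$ is its Euclidean unit normal sphere at $q$ — in other words $(\transportandrescale{x_\infty}{x_k}{\epsilon_k})^{-1}(\Tub{S}{a\epsilon_k})=\Tub{S_k}{a}$ exactly. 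Now the tube-of-radius-$a$ map is a smooth function of the submanifold together with its first-order data (the normal bundle and its unit sphere depend smoothly on the $1$-jet of the defining graph function), and we have just shown $S_k\to T_{x_\infty}S$ in $C^r$ on compact sets. For a linear subspace the normal bundle is the constant $N_{x_\infty}S$, so the unit normal sphere of $S_k$ converges to $UN(T_{x_\infty}S)$ and the tube maps converge, giving $\Tub{S_k}{a}\to\Tub{T_{x_\infty}S}{a}$ smoothly on compact sets. The main technical obstacle is purely bookkeeping: making precise the claim that ``the tube-of-radius-$a$ map depends continuously, in $C^r$ on compacts, on the submanifold and its unit normal field''. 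The cleanest way is to write $\Tub{S_k}{a}$ itself as a graph (for $k$ large and on a fixed compact set, it is a graph over $T_{x_\infty}S$ — equivalently over $\Tub{T_{x_\infty}S}{a}$ — because $S_k$ is $C^1$-close to the flat subspace, so its normal exponential map is a diffeomorphism on the relevant compact region by the implicit function theorem), express the defining function of this graph explicitly in terms of the $1$-jet of the defining function of $S_k$, and invoke $C^r$-continuity of that explicit formula together with the $C^{r+1}$-convergence $S_k\to T_{x_\infty}S$ established in the first step. This completes the proof.
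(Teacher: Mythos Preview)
Your proposal is correct and follows exactly the approach the paper indicates: the paper's own ``proof'' is the single sentence ``By writing \(S\) as a graph over \(T_{x_\infty} S\) in normal coordinates, we conclude the following,'' and your argument is a faithful (and considerably more detailed) elaboration of that hint, using precisely the graph parametrization \(\sigma\) introduced in the proof of \prettyref{lem:integral-convergence-and-bound}. There is nothing to compare or correct.
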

Here, smooth convergence of submanifolds \(N_k\) means that if \(k\) is sufficiently large then in normal coordinates for the limit submanifold \(N_\infty\), each \(N_k\) is the graph of a smooth function on \(N_\infty\), and these functions converge smoothly to \(0\) as \(k \to \infty\).
 
The cylinder \(\Tub{T_{x_\infty} S}{a}\) has mean curvature \((n-m-1)a^{-1}\) in the metric \(\delta\).
Thus, we can use \prettyref{lem:convergence-of-S} to compute the mean curvature of \(\Tub{S}{a}\) in the Euclidean metric \(\delta\) when the radius \(a\) is small.
See also for instance \cite[Section~4]{MahmoudiMazzeoPacard06}.

\begin{lem}\label{lem:mean-curvature-in-delta}
The mean curvature of \(\Tub{S}{a}\) in the Euclidean metric \(\delta\) is
\[
(n - m - 1)a^{-1} + O(1)
\]
as \(a \to 0\), where the bounded term \(O(1)\) depends only on the geometry of \(S\).
\end{lem}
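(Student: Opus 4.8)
The plan is to reduce the computation of the mean curvature of $\Tub{S}{a}$ to the known mean curvature of the flat cylinder $\Tub{T_{x_\infty}S}{a}$ over the tangent space, using the smooth convergence established in \prettyref{lem:convergence-of-S}. First I would fix an arbitrary point $x_\infty \in S$ and work in Euclidean normal coordinates centered at $x_\infty$, writing $S$ locally as the graph of a smooth function $\sigma \colon A \to N_{x_\infty}S$ with $\sigma(0) = 0$ and $J\sigma(0) = 0$. Then I would apply the diffeomorphism $\transportandrescale{x_\infty}{x_\infty}{a}$ (with $x_k = x_\infty$, $\epsilon_k = a$), which rescales the $a$-neighborhood of $x_\infty$ to a unit-size neighborhood of the origin and is a Euclidean homothety composed with parallel transport. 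Under this map, mean curvature in $\delta$ scales by a factor of $a^{-1}$: if $H_{\Tub{S}{a}}$ denotes the mean curvature of $\Tub{S}{a}$ at the image of a point $\zeta \in UN_{x_\infty}\R^n$, then $H_{\Tub{S}{a}}(\transportandrescale{x_\infty}{x_\infty}{a}(\zeta)) = a^{-1} H_{(\transportandrescale{x_\infty}{x_\infty}{a})^{-1}(\Tub{S}{a})}(\zeta)$, because the Euclidean second fundamental form of a hypersurface scales inversely with dilation.

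Next I would invoke \prettyref{lem:convergence-of-S}: as $a \to 0$, the rescaled tubular hypersurfaces $(\transportandrescale{x_\infty}{x_\infty}{a})^{-1}(\Tub{S}{a})$ converge smoothly on compact subsets of $T_{x_\infty}\R^n$ to the flat cylinder $\Tub{T_{x_\infty}S}{1}$ of radius $1$ around the linear subspace $T_{x_\infty}S$. Since mean curvature is a continuous functional of a hypersurface with respect to $C^2$ convergence, the mean curvature of the rescaled hypersurface at $\zeta$ converges to the mean curvature of $\Tub{T_{x_\infty}S}{1}$, which is the constant $n - m - 1$. Combining this with the scaling identity from the previous paragraph gives
\[
H_{\Tub{S}{a}} = a^{-1}\bigl( (n - m - 1) + o(1) \bigr) = (n-m-1)a^{-1} + o(a^{-1})
\]
as $a \to 0$. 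To upgrade the error from $o(a^{-1})$ to $O(1)$ one observes that $\sigma$ is smooth with $J\sigma(0) = 0$, so a Taylor expansion gives $\sigma(a\eta^\parallel) = O(a^2)$ with all derivatives controlled uniformly; hence the rescaled defining function $\sigma(a\,\cdot\,)/a = O(a)$ in every $C^k$ norm on a compact set, and the second fundamental form of the rescaled hypersurface differs from that of the exact cylinder by a term of size $O(a)$. Multiplying by the $a^{-1}$ scaling factor converts this $O(a)$ correction into an $O(1)$ term, whose size is controlled purely by bounds on the derivatives of $\sigma$, i.e.\ by the geometry of $S$. Finally, since $S$ is compact, these bounds are uniform over $x_\infty \in S$, so the $O(1)$ term is uniform as claimed.

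The main obstacle is making the error term genuinely $O(1)$ rather than merely $o(a^{-1})$, and confirming its uniformity over $S$. The qualitative convergence in \prettyref{lem:convergence-of-S} only yields the weaker estimate, so the quantitative improvement requires an explicit — though routine — expansion of the Euclidean second fundamental form of $\Tub{S}{a}$ in terms of $a$ and the jet of $\sigma$ at the base point, tracking that the coefficient of the $a^{-1}$ term is exactly $(n-m-1)$ with no $a^0 \cdot a^{-1}$ ambiguity and that the next term is $a^0$ times something bounded by the $C^2$-norm of $\sigma$. An alternative, perhaps cleaner, route is to use the standard tube formula for the second fundamental form of a distance hypersurface: the principal curvatures of $\Tub{S}{a}$ in the normal direction $\zeta$ are $a^{-1}$ (with multiplicity $n-m-1$, the "sphere" directions in the fiber) together with $\kappa_i/(1 - a\kappa_i)$ for the principal curvatures $\kappa_i$ of the shape operator of $S$ in the direction $\zeta$ (the $m$ "base" directions); summing gives $(n-m-1)a^{-1} + \sum_i \kappa_i + O(a)$, and $\sum_i \kappa_i$ together with the $O(a)$ remainder is manifestly $O(1)$ and depends only on the second fundamental form of $S$, hence only on its geometry, uniformly by compactness. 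Either way the computation is standard; the only care needed is bookkeeping of the scaling exponents.
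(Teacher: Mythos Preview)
Your proposal is correct and follows essentially the same approach as the paper: the paper does not give a detailed proof but simply remarks that \prettyref{lem:convergence-of-S} reduces the computation to the flat cylinder $\Tub{T_{x_\infty}S}{a}$ with mean curvature $(n-m-1)a^{-1}$, and cites \cite[Section~4]{MahmoudiMazzeoPacard06} for the explicit expansion. Your write-up supplies more detail than the paper, and your observation that the bare convergence statement only yields $o(a^{-1})$---so that one must either quantify the Taylor expansion of $\sigma$ or invoke the tube formula for the principal curvatures to obtain the sharper $O(1)$ remainder---is exactly the content left implicit in the paper's reference.
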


The tubular hypersurfaces \(\Tub{T_{x_\infty} S}{a}\) with varying \(a\) give a foliation of \(T_{x_\infty}\R^n \setminus T_{x_\infty} S\) by cylinders with constant mean curvature in the metric \(\conffact{\infty}^{4/(n - 2)} \delta\). 
In the next lemma we see that this foliation contains exactly one minimal hypersurface.

\begin{lem}\label{lem:mean-curvature-of-cylinders}
Let
\[
\hat a
\coloneqq
\left(D_{n, m} \frac{1 - C_{n, m}}{C_{n, m}}\right)^{1/(n - m - 2)}
\]
where
\[
C_{n, m}
\coloneqq
\frac{(n - 2)(n - m - 1)}{2(n - 1)(n - m - 2)}
\]
and
\[
D_{n, m} \coloneqq \int_{\R^m} (1 + |\eta|^2)^{-(n-2)/2} \, d\eta.
\]
Then the mean curvature of \(\Tub{T_{x_\infty} S}{a}\) in the metric \(\conffact{\infty}^{4/(n - 2)} \delta\) is zero for \(a = \hat a\), negative for \(0 < a < \hat a\) and positive for \(a > \hat a\).
\end{lem}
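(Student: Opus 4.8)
The plan is to compute the mean curvature of the cylinder $\Tub{T_{x_\infty}S}{a}$ in the conformally rescaled metric $\conffact{\infty}^{4/(n-2)}\delta$ directly, using the standard formula for how mean curvature transforms under a conformal change. First I would recall that if $\tilde g = \phi^{4/(n-2)}\delta$ on an $n$-manifold and $\Sigma$ is a hypersurface of mean curvature $H_\delta$ with respect to a unit normal $\nu$ in the metric $\delta$, then the mean curvature in $\tilde g$ is
\[
H_{\tilde g}
=
\phi^{-2/(n-2)}\left(H_\delta + \frac{2(n-1)}{n-2}\,\phi^{-1}\,\partial_\nu \phi\right).
\]
Since $\phi^{-2/(n-2)} > 0$, the sign of $H_{\tilde g}$ is the sign of $H_\delta + \tfrac{2(n-1)}{n-2}\phi^{-1}\partial_\nu\phi$. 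For $\Sigma = \Tub{T_{x_\infty}S}{a}$ and $\nu$ the outward normal we already know $H_\delta = (n-m-1)a^{-1}$, so the whole computation reduces to evaluating $\conffact{\infty}$ and its outward normal derivative on the cylinder of radius $a$.

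The key simplification is that everything is invariant under the isometries of $T_{x_\infty}\R^n$ that fix $T_{x_\infty}S$: translations along $T_{x_\infty}S$ and rotations of the normal space $N_{x_\infty}S \cong \R^{n-m}$. Hence $\conffact{\infty}(\zeta)$ depends only on $\rho \coloneqq \dist^\delta(\zeta, T_{x_\infty}S)$, and I would write $\conffact{\infty}$ as a function $v(\rho)$ by evaluating the defining integral at a point at distance $\rho$ from the subspace. Concretely, placing $\zeta$ at distance $\rho$ and integrating over $\eta \in \R^m = T_{x_\infty}S$,
\[
v(\rho)
=
1 + \int_{\R^m}(\rho^2 + |\eta|^2)^{-(n-2)/2}\,d\eta
=
1 + \rho^{m - (n-2)} D_{n,m}
=
1 + D_{n,m}\,\rho^{-(n-m-2)},
\]
where the substitution $\eta = \rho\,\eta'$ produces exactly the constant $D_{n,m} = \int_{\R^m}(1+|\eta'|^2)^{-(n-2)/2}\,d\eta'$ (finite since $n-2 > m$). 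The outward normal derivative on $\Tub{T_{x_\infty}S}{a}$ is then just $v'(a) = -(n-m-2)D_{n,m}\,a^{-(n-m-1)}$, and $\conffact{\infty}$ there equals $v(a) = 1 + D_{n,m}a^{-(n-m-2)}$.

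Plugging in, the sign of $H_{\tilde g}$ on the radius-$a$ cylinder is the sign of
\[
\frac{n-m-1}{a}
+
\frac{2(n-1)}{n-2}\cdot\frac{-(n-m-2)D_{n,m}a^{-(n-m-1)}}{1 + D_{n,m}a^{-(n-m-2)}}.
\]
Multiplying through by the positive quantity $a\bigl(1 + D_{n,m}a^{-(n-m-2)}\bigr)$, this has the same sign as
\[
(n-m-1)\bigl(1 + D_{n,m}a^{-(n-m-2)}\bigr)
-
\frac{2(n-1)(n-m-2)}{n-2}\,D_{n,m}a^{-(n-m-2)},
\]
which, after collecting the $a^{-(n-m-2)}$ terms, equals
\[
(n-m-1) - \Bigl(\tfrac{2(n-1)(n-m-2)}{n-2} - (n-m-1)\Bigr)D_{n,m}\,a^{-(n-m-2)}.
\]
A short algebraic check identifies the bracketed coefficient with $(n-m-1)(1 - C_{n,m})/C_{n,m}$ using the definition of $C_{n,m}$, so the expression is positive exactly when $a^{-(n-m-2)} < D_{n,m}^{-1}(1-C_{n,m})/C_{n,m} \cdot$ (up to the common factor $n-m-1 > 0$), i.e.\ when $a > \hat a$, vanishes when $a = \hat a$, and is negative when $a < \hat a$. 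This gives the claim.

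The only genuinely delicate point is justifying that the conformal mean-curvature formula and the reduction to the radial function $v(\rho)$ are legitimate here: $\conffact{\infty}$ is smooth away from $T_{x_\infty}S$ (which is where the cylinders live), and the integral defining it converges and is differentiable under the integral sign because $n-2 > m$ — both of which follow from (or are parallel to) \prettyref{lem:integral-convergence-and-bound}. The rest is bookkeeping with the constants $C_{n,m}$, $D_{n,m}$, $\hat a$; I would carry out the identification of the coefficient with $(n-m-1)(1-C_{n,m})/C_{n,m}$ explicitly but briefly, since it is a one-line manipulation once $C_{n,m}$ is substituted.
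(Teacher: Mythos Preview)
Your approach is essentially the same as the paper's: compute $\conffact{\infty}$ on the cylinder via the rescaling $\eta = \rho\eta'$ to get $1 + D_{n,m}\,a^{-(n-m-2)}$, apply the conformal mean-curvature formula, and analyze the sign; the paper writes this as $\conffact{\infty}^{-2/(n-2)}\bigl(\tfrac{n-m-1}{a} + 2\tfrac{n-1}{n-2}\tfrac{d}{da}\ln\conffact{\infty}\bigr)$ and then simply says ``from this it is not complicated to verify the claim,'' whereas you actually carry out the algebra. One slip to fix: your threshold in the penultimate sentence is inverted --- the correct condition is $a^{-(n-m-2)} < D_{n,m}^{-1}\,C_{n,m}/(1-C_{n,m}) = \hat a^{-(n-m-2)}$ --- though your final conclusion $a > \hat a$ is right.
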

\begin{proof}
For \(\zeta \in \Tub{T_{x_\infty} S}{a}\) it holds that
\[\begin{split}
\conffact{\infty}(\zeta)
&=
1 + \int_{T_{x_\infty} S} |\zeta - \eta|^{-(n-2)} \, d\eta
\\
&=
1 + \int_{T_{x_\infty} S} (a^2 + |\eta|^2)^{-(n-2)/2} \, d\eta
\\
&=
1 + a^{-(n - m - 2)} \int_{\R^m} (1 + |\eta|^2)^{-(n-2)/2} \, d\eta
\\
&=
1 + a^{-(n - m - 2)} D_{n, m}.
\end{split}\]
Since the mean curvature of \(\Tub{T_{x_\infty} S}{a}\) in the metric \(\delta\) is \((n-m-1)a^{-1}\) we find that its mean curvature in the metric \(\conffact{\infty}^{4/(n - 2)} \delta\) is
\[\begin{split}
&\conffact{\infty}^{-2/(n-2)}\left(\frac{n - m - 1}{a} + 2 \frac{n - 1}{n - 2} \deriv{}{a} \ln \conffact{\infty}\right)\\
&\qquad\qquad= (1 + a^{-\gamma} D_{n, m})^{-\frac{2}{n-2}} \frac{1}{a} \left((\gamma + 1) - 2 \gamma \frac{n - 1}{n - 2} \frac{a^{-\gamma} D_{n, m}}{1 + a^{-\gamma} D_{n, m}}\right)
\end{split}\]
where we have set \(\gamma \coloneqq n - m - 2\).
From this it is not complicated to verify the claim of the lemma. 
\end{proof}

In the following proposition we show that a tubular hypersurface around \(S\) with radius in a certain interval has positive mean curvature in \((\R^n \setminus S, g_\epsilon)\).

\begin{prop}\label{prop:outer-barriers}
There are constants \(\Couter > 0\) and \(\Router > 0\) such that for all sufficiently small \(\epsilon > 0\) it holds for all \(\Couter \epsilon \leq a \leq \Router\) that the tubular hypersurface \(\Tub{S}{a}\) has positive mean curvature in \((\R^n \setminus S, g_\epsilon)\).
\end{prop}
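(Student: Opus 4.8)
The plan is to reduce everything to the conformal change of mean curvature. For $\Couter\epsilon \le a \le \Router$ with $\Router$ small, the hypersurface $\Tub{S}{a}$ is smooth and embedded and satisfies $\dist^\delta(\cdot, S) \equiv a$ on it; writing $\nu$ for the Euclidean unit normal pointing away from $S$, the mean curvature of $\Tub{S}{a}$ in $g_\epsilon = \conffact{\epsilon}^{4/(n-2)}\delta$ equals
\[
\conffact{\epsilon}^{-2/(n-2)}\left(H_\delta + \frac{2(n-1)}{n-2}\,\partial_\nu\ln\conffact{\epsilon}\right),
\]
where $H_\delta = H_\delta(\Tub{S}{a})$ is the Euclidean mean curvature (this is the same formula used in the proof of \prettyref{lem:mean-curvature-of-cylinders}). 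Since the conformal factor in front is positive, it suffices to show the parenthesis is positive on $\Tub{S}{a}$ for all $a$ in the stated range, after $\Couter$ and $\Router$ are chosen suitably.

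First I would fix $\Router$. By \prettyref{lem:mean-curvature-in-delta} there is a constant $C_0$ depending only on $S$ with $H_\delta(\Tub{S}{a}) \ge (n-m-1)a^{-1} - C_0$ for all small $a$; choosing $\Router$ small enough, and also below the focal radius of $S$ so that $\Tub{S}{a}$ is a smooth embedded hypersurface on which distance to $S$ is identically $a$, gives
\[
H_\delta(\Tub{S}{a}) \ge \tfrac12(n-m-1)a^{-1} \qquad \text{for all } 0 < a \le \Router.
\]

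The core of the argument is an upper bound for $|\partial_\nu\ln\conffact{\epsilon}| \le |\nabla\conffact{\epsilon}|/\conffact{\epsilon} \le |\nabla\conffact{\epsilon}|$ on $\Tub{S}{a}$. Differentiating under the integral sign, $|\nabla\conffact{\epsilon}(x)| \le (n-2)\epsilon^{n-m-2}\int_S |x-y|^{-(n-1)}\,dy$, so I need a constant $C_1 = C_1(S,n,m)$ with $\int_S |x-y|^{-(n-1)}\,dy \le C_1 a^{-(n-m-1)}$ for all $x \in \Tub{S}{a}$ and all small $a$. This is exactly the boundedness statement of \prettyref{lem:integral-convergence-and-bound}\ref{enum:integral-bound}, read with $\gamma = n-1$ (which satisfies $\gamma > m$ since $n-m \ge 3$) and with the role of the small parameter $\epsilon_k$ played by $a$: for $x = \exp^\delta(a\zeta_0)$ with $x_0 \in S$ a nearest point and $\zeta_0 \in UN_{x_0}S$, the quantity $a^{(n-1)-m}\int_S|x-y|^{-(n-1)}\,dy$ is the value at $\zeta_0$ of the function $F_k$ of that lemma with base point $x_0$, and $|\zeta_0| = 1$, $\dist^\delta(\zeta_0, T_{x_0}S) = 1$. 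The lemma is stated for a fixed base point and for $\zeta$ staying in a fixed region; promoting it to a bound uniform over all nearest points $x_0 \in S$ and all unit normals $\zeta_0$ is done by a standard compactness argument (extract $x_k \to x_\infty \in S$, $a_k \to 0$, $\zeta_k \to \zeta_\infty$ with $\zeta_\infty$ a unit normal at $x_\infty$, then apply \prettyref{lem:integral-convergence-and-bound}\ref{enum:integral-bound} with $\beta_1 = 1$, $\beta_2 = 1/2$), keeping track of the harmless dependence of the constants on $S$. I expect this promotion to uniformity to be the one place requiring genuine care. Granting it, on $\Tub{S}{a}$ we obtain
\[
|\partial_\nu\ln\conffact{\epsilon}| \le (n-2)C_1\,\epsilon^{n-m-2}a^{-(n-m-1)} = (n-2)C_1\left(\frac{\epsilon}{a}\right)^{n-m-2}\frac1a.
\]

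Finally I would combine the two estimates: on $\Tub{S}{a}$ with $\Couter\epsilon \le a \le \Router$ one has $(\epsilon/a)^{n-m-2} \le \Couter^{-(n-m-2)}$, hence
\[
H_\delta + \frac{2(n-1)}{n-2}\partial_\nu\ln\conffact{\epsilon} \ge \frac1a\left(\frac{n-m-1}{2} - 2(n-1)C_1\Couter^{-(n-m-2)}\right).
\]
Choosing $\Couter$ so large that $2(n-1)C_1\Couter^{-(n-m-2)} \le \tfrac14(n-m-1)$ — possible because $n-m-2 \ge 1$ — makes the right-hand side at least $\tfrac14(n-m-1)a^{-1} > 0$, so $\Tub{S}{a}$ has positive mean curvature in $(\R^n\setminus S, g_\epsilon)$. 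The hypothesis that $\epsilon$ be sufficiently small enters only through $\Couter\epsilon \le \Router$, which is precisely what makes the interval of admissible radii nonempty.
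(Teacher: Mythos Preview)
Your argument is correct and proceeds along a genuinely different line than the paper's.

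The paper argues by contradiction and rescaling: assuming failure, it extracts sequences $(\epsilon_k,a_k,x_k)$ with nonpositive mean curvature at $x_k\in\Tub{S}{a_k}$, rescales around $x_k$, and splits into three cases according to whether $a_k\to 0$ with $\epsilon_k/a_k\to L>0$, $a_k\to 0$ with $\epsilon_k/a_k\to 0$, or $a_k\to a_\infty>0$. In each case the rescaled tube converges to a cylinder and the rescaled metric converges (via \prettyref{prop:uniform-integral-convergence}) to either $\conffact{\infty}^{4/(n-2)}\delta$ or $\delta$, metrics in which the limit cylinder has strictly positive mean curvature by \prettyref{lem:mean-curvature-of-cylinders} or directly; this contradicts the assumed nonpositivity. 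You instead work pointwise with the conformal change formula, combining the lower bound for $H_\delta$ from \prettyref{lem:mean-curvature-in-delta} with an upper bound for $|\nabla\conffact{\epsilon}|$ obtained from \prettyref{lem:integral-convergence-and-bound}\ref{enum:integral-bound} at exponent $\gamma=n-1$, and then pick $\Couter$ large enough to dominate the conformal correction.

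Your route is more elementary: it bypasses the metric-convergence machinery and the identification of $\hat a$ in \prettyref{lem:mean-curvature-of-cylinders}, and yields explicit (if crude) relations among the constants. The paper's route, on the other hand, gives the sharper information that any $\Couter>\hat a$ works, and its rescaling framework is exactly what is reused in \prettyref{prop:local-graphicality}, so there is no wasted effort from the paper's point of view. One small addition you should make: in your compactness promotion of the bound $\int_S|x-y|^{-(n-1)}\,dy\le C_1 a^{-(n-m-1)}$ you discuss only the branch $a_k\to 0$, whereas the stated bound must hold on the full range $0<a\le\Router$. The branch $a_k\to a_\infty>0$ is harmless (the integral stays bounded while $a_k^{-(n-m-1)}$ stays bounded below), but it should be noted for completeness.
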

\begin{proof}
Suppose that the result does not hold.
Then for every choice of \(C\) and \(R\) there is a sequence \(\epsilon_k \to 0\) and a sequence \(a_k\) with \(C \epsilon_k \leq a_k \leq R\) such that \(\Tub{S}{a_k}\) contains at least one point \(x_k\) where the mean curvature is nonpositive. 

We first consider the case where \(a_k \to 0\) and there is a subsequence for which \(\epsilon_k/a_k\) converges to a positive number \(L\). 
Note that then \(L \leq 1/C\). 
We rescale by \(La_k\) around the point \(x_k\) using the map \(\transportandrescale{x_\infty}{x_k}{La_k}\).
Then by \prettyref{lem:convergence-of-S} we have
\[
(\transportandrescale{x_\infty}{x_k}{La_k})^{-1}(\Tub{S}{a_k}) \to \Tub{T_{x_\infty} S}{1/L}.
\]
It holds that
\[\begin{split}
&(La_k)^{-2}(\transportandrescale{x_\infty}{x_k}{La_k})^*(g_{\epsilon_k})(\zeta) \\
&\quad =
\left(1 + \epsilon_k^{n-m-2} \int_S |\transportandrescale{x_\infty}{x_k}{La_k}(\zeta) - y|^{-(n-2)} \, dy \right)^{4/(n-2)} \delta \\
&\quad =
\left(1 + \left(\frac{\epsilon_k}{La_k}\right)^{n-m-2} F_k(\zeta) \right)^{4/(n-2)} \delta
\end{split}\]
where
\[
F_k(\zeta)
=
(La_k)^{n-m-2} \int_S |\transportandrescale{x_\infty}{x_k}{La_k}(\zeta) - y|^{-(n-2)} \, dy.
\]
We conclude from \prettyref{prop:uniform-integral-convergence} applied with \(\epsilon_k = L a_k\) that
\[\begin{split}
\lim_{k \to \infty} (La_k)^{-2}(\transportandrescale{x_\infty}{x_k}{La_k})^*(g_{\epsilon_k})(\zeta)
&=
\left(1 + F_\infty(\zeta) \right)^{4/(n-2)} \delta
\\
&=
\left( \conffact{\infty}(\zeta) \right)^{4/(n-2)} \delta
\end{split}\]
where
\[
F_\infty(\zeta)
=
\int_{T_{x_\infty}S} |\zeta - \eta|^{-(n-2)} \, d\eta.
\]
Choose \(C > \hat a\).
Then \(1/L \geq C > \hat a\) so that by \prettyref{lem:mean-curvature-of-cylinders} the limit cylinder \(\Tub{T_{x_\infty} S}{1/L}\) has positive mean curvature in the limit metric \(\conffact{\infty}^{4/(n-2)} \delta\).
This contradicts the assumption that the mean curvature of \(\Tub{S}{a_k}\) in the metric \(g_{\epsilon_k}\) is nonpositive at \(x_k\) for every \(k\).

Second, we consider the case where \(a_k \to 0\) and \(\epsilon_k/a_k \to 0\). 
In this case we rescale by \(a_k\) around the point \(x_k\) using the map \(\transportandrescale{x_\infty}{x_k}{a_k}\), in which case 
\[
(\transportandrescale{x_\infty}{x_k}{a_k})^{-1}(\Tub{S}{a_k}) \to \Tub{T_{x_\infty} S}{1}.
\]
From a computation like in the previous case we find that 
\[ 
\lim_{k \to \infty} a_k^{-2}(\transportandrescale{x_\infty}{x_k}{a_k})^*(g_{\epsilon_k}) \\
=
\delta. 
\]
Since \(\Tub{T_{x_\infty} S}{1}\) has positive mean curvature in the metric \(\delta\) we again find a contradiction.

Finally, we consider the case where there is a subsequence for which \(a_k \to a_\infty \neq 0\).
Then \(a_\infty \leq R\).
If \(R\) is sufficiently small, the tubular hypersurfaces \(\Tub{S}{a_k}\) stay in a compact set bounded away from \(S\) and the metrics \(g_{\epsilon_k}\) tend uniformly to the Euclidean metric \(\delta\) on this compact set. 
Choosing \(R\) sufficiently small compared to the geometry of \(S\) we find from \prettyref{lem:mean-curvature-in-delta} that the limit surface \(\Tub{S}{a_\infty}\) has positive mean curvature in the limit metric \(\delta\), which contradicts the assumption of nonpositive mean curvature of \(\Tub{S}{a_k}\) in \(g_{\epsilon_k}\) at \(x_k\).
\end{proof}

The following proposition states that any tubular hypersurface around \(S\) with sufficiently small radius has negative mean curvature in \((\R^n \setminus S, g_\epsilon)\).
The proof is similar to the proof of the previous proposition, and we omit it.

\begin{prop}\label{prop:inner-barriers}
There is a constant \(\Cinner > 0\) such that for all sufficiently small \(\epsilon > 0\) it holds for all \(0 < a \leq \Cinner \epsilon\) that the tubular hypersurface \(\Tub{S}{a}\) has negative mean curvature in \((\R^n \setminus S, g_\epsilon)\).
\end{prop}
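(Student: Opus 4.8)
The plan is to argue by contradiction, following the scheme of the proof of \prettyref{prop:outer-barriers} but with one simplification: since \(a \leq \Cinner\epsilon \to 0\), no analogue of the third case of that proof (the one with \(a_k \to a_\infty \neq 0\)) can occur, so only two cases remain. Suppose the statement fails and fix any constant \(C \in (0, \hat a)\). Then there are sequences \(\epsilon_k \to 0\) and \(a_k\) with \(0 < a_k \leq C\epsilon_k\) such that \(\Tub{S}{a_k}\) carries a point \(p_k\) at which its mean curvature in \(g_{\epsilon_k}\) is nonnegative; let \(x_k \in S\) be the foot point of \(p_k\). After passing to a subsequence we may assume \(x_k \to x_\infty \in S\), and since \(a_k \leq C\epsilon_k \to 0\) we have \(a_k \to 0\). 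Because \(\epsilon_k/a_k \geq 1/C\), a further subsequence satisfies \(\epsilon_k/a_k \to L\) for some \(L \in [1/C, \infty]\), and I treat \(L < \infty\) and \(L = \infty\) separately.

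If \(L < \infty\), I rescale by \(L a_k\) around \(x_k\) using \(\transportandrescale{x_\infty}{x_k}{La_k}\), exactly as in the first case of \prettyref{prop:outer-barriers}. By \prettyref{lem:convergence-of-S} the rescaled tubular hypersurfaces \((\transportandrescale{x_\infty}{x_k}{La_k})^{-1}(\Tub{S}{a_k})\) converge smoothly to \(\Tub{T_{x_\infty}S}{1/L}\), so the preimages of \(p_k\) stay in a fixed compact subset of \(T_{x_\infty}\R^n \setminus T_{x_\infty}S\); and, writing the rescaled pullback metric as \(\bigl(1 + (\epsilon_k/(L a_k))^{n-m-2}F_k\bigr)^{4/(n-2)}\delta\) with \(F_k(\zeta) = (L a_k)^{n-m-2}\int_S |\transportandrescale{x_\infty}{x_k}{La_k}(\zeta) - y|^{-(n-2)}\, dy\), \prettyref{prop:uniform-integral-convergence} applied with \(L a_k\) in the role of \(\epsilon_k\) (so that \(\epsilon_k/(L a_k) \to 1\)) gives that these metrics converge in \(C^2\) on compact subsets of \(T_{x_\infty}\R^n \setminus T_{x_\infty}S\) to \(\conffact{\infty}^{4/(n-2)}\delta\). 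Since \(1/L \leq C < \hat a\), \prettyref{lem:mean-curvature-of-cylinders} says \(\Tub{T_{x_\infty}S}{1/L}\) has strictly negative mean curvature everywhere in the limit metric; the \(C^2\) convergence of surfaces and metrics then forces the mean curvature of \(\Tub{S}{a_k}\) at \(p_k\) to be negative for all large \(k\), a contradiction.

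If \(L = \infty\), the rescaled conformal factor no longer converges, and this is the step requiring an extra idea. I rescale instead by \(a_k\) using \(\transportandrescale{x_\infty}{x_k}{a_k}\), so that \((\transportandrescale{x_\infty}{x_k}{a_k})^{-1}(\Tub{S}{a_k}) \to \Tub{T_{x_\infty}S}{1}\) by \prettyref{lem:convergence-of-S}, and I then multiply the rescaled pullback metric by the positive constant \((\epsilon_k/a_k)^{-4(n-m-2)/(n-2)}\); this renormalization does not change the sign of the mean curvature, since both scaling a metric by a positive constant and pulling it back by a diffeomorphism preserve that sign. The renormalized metrics equal \(\bigl((\epsilon_k/a_k)^{-(n-m-2)} + F_k\bigr)^{4/(n-2)}\delta\) with \(F_k(\zeta) = a_k^{n-m-2}\int_S |\transportandrescale{x_\infty}{x_k}{a_k}(\zeta) - y|^{-(n-2)}\, dy\), and since \((\epsilon_k/a_k)^{-(n-m-2)} \to 0\) while, by \prettyref{prop:uniform-integral-convergence} with \(a_k\) in the role of \(\epsilon_k\), \(F_k \to F_\infty\) in \(C^2\) on compact subsets of \(T_{x_\infty}\R^n \setminus T_{x_\infty}S\), they converge there to \(F_\infty^{4/(n-2)}\delta\) with \(F_\infty(\zeta) = \int_{T_{x_\infty}S} |\zeta - \eta|^{-(n-2)}\, d\eta\). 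It then remains to verify that the unit cylinder \(\Tub{T_{x_\infty}S}{1}\) has strictly negative mean curvature in \(F_\infty^{4/(n-2)}\delta\). This is the computation in the proof of \prettyref{lem:mean-curvature-of-cylinders} with \(\conffact{\infty}\) replaced by \(F_\infty\) --- equivalently, the \(a \to 0\) regime of that computation, in which the additive constant \(1\) becomes negligible --- and it shows the mean curvature to be a positive multiple of \((n - m - 1) - \tfrac{2(n-1)(n-m-2)}{n-2} = \tfrac{2(n-1)(n-m-2)}{n-2}\,(C_{n,m} - 1)\), which is strictly negative because \(C_{n,m} < 1\); indeed, with \(p = n - 1 \geq 2\) and \(q = n - m - 2 \geq 1\) the inequality \((n-2)(n-m-1) < 2(n-1)(n-m-2)\) reduces to \(p(q-1) + q + 1 > 0\). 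As in the previous case, the \(C^2\) convergence then yields a contradiction. I expect this case \(L = \infty\) to be the main obstacle: the rescaled conformal factor diverges, and it is the constant renormalization together with the strict inequality \(C_{n,m} < 1\) --- exactly what makes the limiting cylinder's mean curvature strictly negative rather than vanishing --- that resolve it.
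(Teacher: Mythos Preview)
Your argument is correct. The paper omits the proof entirely, saying only that it is similar to that of \prettyref{prop:outer-barriers}, so there is no detailed proof to compare against; your two-case contradiction scheme is exactly the adaptation the paper has in mind, and your handling of the genuinely new case \(L=\infty\)---multiplying the rescaled pullback metric by \((\epsilon_k/a_k)^{-4(n-m-2)/(n-2)}\) so that the limit is \(F_\infty^{4/(n-2)}\delta\), and then verifying via \(C_{n,m}<1\) that the unit cylinder has strictly negative mean curvature in this limit metric---is precisely the extra ingredient the analogue of \prettyref{prop:outer-barriers} requires but does not itself supply. One cosmetic remark: taking \(x_k\) to be the foot point on \(S\) (rather than the point \(p_k\) on the tube, as the proof of \prettyref{prop:outer-barriers} does) is actually cleaner, since \prettyref{lem:convergence-of-S} and \prettyref{prop:uniform-integral-convergence} are stated for \(x_k\in S\).
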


\section{The location of outer area minimizing stationary hypersurfaces}

\label{sec:the-location-of-outer-area-minimizing-stationary-hypersurfaces}

The purpose of this section is to prove that for small \(\epsilon\), all outer area minimizing stationary bounding hypersurfaces in \((\R^n \setminus S, g_\epsilon)\) are contained in the region between \(\Tub{S}{\Cinner \epsilon}\) and \(\Tub{S}{\Couter \epsilon}\).
In this paper, we use the word \enquote{hypersurface} to mean a smooth hypersurface.
Note that this means that the closure of a hypersurface might contain singular points.
We begin by introducing the relevant terminology.

Let \((M, g)\) be an asymptotically Euclidean Riemannian manifold.
A hypersurface \(\Sigma \subset M\) is called \emph{bounding} if its closure \(\overline \Sigma\) is compact and \(\Sigma\) is the reduced boundary of a Caccioppoli set (see for instance \cite{Giusti84}, in particular \cite[Chapter~3 and Remark~3.2]{Giusti84}), the complement of which is a neighborhood of the asymptotically Euclidean end.
We call this Caccioppoli set \emph{the region inside \(\Sigma\)}.
The bounding hypersurface \(\Sigma\) \emph{encloses} a bounding hypersurface \(\Sigma'\) if the region inside \(\Sigma\) contains the region inside \(\Sigma'\).
We say that a bounding hypersurface in \((\R^n \setminus S, g_\epsilon)\) \emph{encloses the end at \(S\)} if it encloses \(\Tub{S}{a}\) for all sufficiently small \(a > 0\).
We define the \emph{regular} and \emph{singular} sets of a hypersurface \(\Sigma\) as follows:
The regular set \(\reg(\Sigma)\) is the set of points in \(\overline \Sigma\) which have a neighborhood in which \(\overline \Sigma\) is a connected hypersurface.
The singular set is \(\sing(\Sigma) = \overline \Sigma \setminus \reg(\Sigma)\).
We say that \(\Sigma\) is \emph{nonsingular} if \(\sing(\Sigma) = \emptyset\).
We only consider hypersurfaces \(\Sigma\) for which \(\Sigma = \reg(\Sigma)\), in other words that no points where \(\Sigma\) is smooth have been left out.

Let \(\Sigma\) be a bounding hypersurface in \((M, g)\), and let \(E\) be the region inside \(\Sigma\).
Then \(\Sigma\) is \emph{outer area minimizing} if its area is not greater than the perimeter of any Caccioppoli set containing \(E\).
A hypersurface \(\Sigma\) in a Riemannian manifold \((M, g)\) is \emph{stationary} if the first variation of its area vanishes when \(\Sigma\) is deformed in the direction of any vector field with compact support in \(M\).
A hypersurface is \emph{stable} if, in addition, the second variation of area is nonnegative for such vector fields.
We will use the concept of stationarity for \(\Sigma\) in \(M = \R^n \setminus S\) as well as in \(M = \R^n\).
Note that every outer area minimizing stationary hypersurface is stable.

\begin{figure}
 \centering{}
 \input{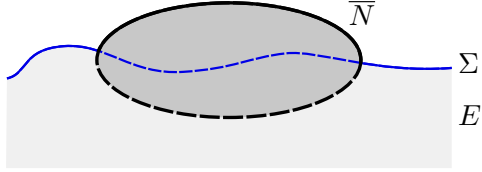}
 \caption{
 	\(\partial(E \cup N)\) is an outward variation of \(\Sigma\) and hence has at least the same area.
	This gives a bound for the area of \(\Sigma \cap N\).
 }
\label{fig:area-bound}
\end{figure}

The outer area minimizing property gives local area bounds, as expressed in the following lemma.
We denote \(k\)-dimensional Hausdorff measure with respect to a metric \(g\) by \(\Hm{g}{k}\).
\begin{lem}\label{lem:local-area-bound}
Let \(\Sigma\) be an outer area minimizing hypersurface in an asymptotically Euclidean Riemannian manifold \((M, g)\).
Let \(N \subset M\) be an open set with compact closure and piecewise smooth boundary.
Then
\[
\Hm{g}{n-1}(\Sigma \cap N)
\leq
\Hm{g}{n-1}(\partial N).
\]
\end{lem}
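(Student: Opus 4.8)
The plan is to test the outer area minimizing property of $\Sigma$ against the region inside $\Sigma$ enlarged by $N$, and then to bound the perimeter of this enlargement by splitting it into the part lying outside $\overline N$ and the part lying on $\partial N$; this is exactly the mechanism depicted in \prettyref{fig:area-bound}. Let $E$ be the region inside $\Sigma$, so that $E$ is a Caccioppoli set whose reduced boundary is $\Sigma$ and whose $g$-perimeter, denoted $P(\cdot)$, satisfies $P(E) = \Hm{g}{n-1}(\Sigma)$. Since $E$ and $N$ have finite perimeter, so does $E \cup N$, and $E \cup N$ contains $E$; hence $E \cup N$ is an admissible competitor in the definition of outer area minimizing, and
\[
\Hm{g}{n-1}(\Sigma) \;=\; P(E) \;\leq\; P(E \cup N).
\]
In the language of \prettyref{fig:area-bound}, $\partial(E \cup N)$ is an outward variation of $\Sigma$.

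To estimate the right-hand side I would localize the reduced boundary of $E \cup N$ to the disjoint decomposition $M = N \sqcup \partial N \sqcup (M \setminus \overline N)$. On the open set $N$ the characteristic function $\chi_{E \cup N}$ is identically $1$, so no reduced boundary is contributed there; on the open set $M \setminus \overline N$ it coincides with $\chi_E$, so the reduced boundary there equals $\Sigma \setminus \overline N$; and the remaining part of the reduced boundary is contained in $\partial N$. This yields
\[
P(E \cup N) \;\leq\; \Hm{g}{n-1}\!\bigl(\Sigma \setminus \overline N\bigr) + \Hm{g}{n-1}(\partial N).
\]
Combining this with the previous inequality, writing $\Hm{g}{n-1}(\Sigma) = \Hm{g}{n-1}(\Sigma \cap \overline N) + \Hm{g}{n-1}(\Sigma \setminus \overline N)$, and cancelling the term $\Hm{g}{n-1}(\Sigma \setminus \overline N)$ (which is finite because $P(E) < \infty$), I would conclude $\Hm{g}{n-1}(\Sigma \cap \overline N) \leq \Hm{g}{n-1}(\partial N)$, and the lemma follows from $\Sigma \cap N \subseteq \Sigma \cap \overline N$.

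The step that I expect to require the most care is the measure-theoretic bookkeeping behind the second displayed inequality: one must invoke the locality of perimeter with respect to open sets in order to split $\partial^*(E \cup N)$ along the partition of $M$, and one must identify the $\Hm{g}{n-1}$-measure of the reduced boundary $\partial^* E$ with the area of the smooth hypersurface $\Sigma$. A tidy way to package this is to use instead the submodularity inequality $P(E \cup N) + P(E \cap N) \leq P(E) + P(N)$, which holds for any two sets of finite perimeter (see \cite{Giusti84}): together with $P(E) \leq P(E \cup N)$ from outer area minimizing it gives $P(E \cap N) \leq P(N)$, and then, localizing to $N$ where $E \cap N$ agrees with $E$,
\[
\Hm{g}{n-1}(\Sigma \cap N) \;=\; \Hm{g}{n-1}\bigl(\partial^*(E \cap N) \cap N\bigr) \;\leq\; P(E \cap N) \;\leq\; P(N) \;=\; \Hm{g}{n-1}(\partial N),
\]
the last equality holding because $\partial N$ is piecewise smooth and its non-smooth locus is therefore $\Hm{g}{n-1}$-negligible.
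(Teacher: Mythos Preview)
Your proposal is correct and your first argument is essentially identical to the paper's: both compare \(\Sigma\) with the outward competitor \(E\cup N\), split along \(\overline N\) and its complement, identify \(\partial^*(E\cup N)\setminus\overline N = \Sigma\setminus\overline N\), and bound the remainder by \(\Hm{g}{n-1}(\partial N)\). Your alternative via the submodularity inequality \(P(E\cup N)+P(E\cap N)\le P(E)+P(N)\) is a neat repackaging of the same idea rather than a different route; it has the minor advantage of making the measure-theoretic bookkeeping (locality of perimeter on the open set \(N\)) more explicit.
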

\begin{proof}
Let \(E\) be the region inside \(\Sigma\).
The set \(\partial(E \cup N)\) is an outward variation of \(\Sigma\), as shown in \prettyref{fig:area-bound}, and hence
\[
\Hm{g}{n-1}(\Sigma)
\leq
\Hm{g}{n-1}(\partial(E \cup N)).
\]
Splitting this inequality into the part inside \(\overline N\) and the part outside \(\overline N\) we have
\[\begin{split}
&\Hm{g}{n-1}\left(\Sigma \cap \overline N\right) + \Hm{g}{n-1}\left(\Sigma \setminus \overline N\right)
\\
&\qquad \leq
\Hm{g}{n-1}\left(\partial(E \cup N) \cap \overline N\right) + \Hm{g}{n-1}\left(\partial(E \cup N) \setminus \overline N\right).
\end{split}\]
Since
\(
\Sigma \setminus \overline N
=
\partial(E \cup N) \setminus \overline N
\)
the above inequality reduces to
\[
\Hm{g}{n-1}(\Sigma \cap \overline N)
\leq
\Hm{g}{n-1}(\partial(E \cup N) \cap \overline N).
\]
Since
\(
\Sigma \cap N
\subseteq
\Sigma \cap \overline N
\)
and
\(
\partial(E \cup N) \cap \overline N
\subseteq
\partial N
\)
we obtain the desired bound.
\end{proof}

For the proof of \prettyref{prop:convergence-without-rescaling} and \prettyref{prop:local-graphicality} we will use a convergence result for stable hypersurfaces.
We have not been able to find a proof of this result in the exact form we need, but it follows from the work of Schoen and Simon in \cite{SchoenSimon81} and we sketch a proof.
\begin{thm}[Schoen--Simon \cite{SchoenSimon81}]\label{thm:minimal-surface-convergence}
Let \(\Omega\) be an orientable smooth manifold of dimension \(n\) and let \(K \subset \Omega\) be compact.
Let \(h_\infty\) be a Riemannian metric on \(\Omega\), let \(h_k\) be a sequence of smooth Riemannian metrics on \(\Omega\), and let \(\Gamma_k\) be a sequence of hypersurfaces in \(\Omega\).
Assume that
\begin{itemize}
	\item \(\overline \Gamma_k \cap K \neq \emptyset\) for all \(k\),
	\item each \(\Gamma_k\) is a stable hypersurface for the metric \(h_k\),
	\item \(\Hm{h_\infty}{n-3}(\sing(\Gamma_k)) = 0\),
	\item for every open set \(\Omega'\) with compact closure \(\overline{\Omega'} \subseteq \Omega\)
	\begin{itemize}
		\item the metrics \(h_k\) converge to \(h_\infty\) uniformly in \(C^r(\Omega')\) for every \(r\), and
		\item \(\limsup_{k \to \infty} \Hm{h_k}{n-1}(\Gamma_k \cap \Omega') < \infty\).
	\end{itemize}
\end{itemize}
Then there is
\begin{itemize}
\item a subsequence of \(\Gamma_k\), still denoted \(\Gamma_k\), and
\item a hypersurface \(\Gamma_\infty \subset \Omega\)
\end{itemize}
such that
\begin{itemize}
	\item \(\Gamma_k \to \Gamma_\infty\) as varifolds,
	\item \(\Hm{h_\infty}{\alpha}(\sing(\Gamma_\infty)) = 0\) if \(\alpha > n - 8\) and \(\alpha \geq 0\),
	\item \(\Gamma_\infty \cap K \neq \emptyset\),
	\item \(\Gamma_\infty\) is a stable hypersurface in the metric \(h_\infty\),
	\item for every open set \(\Omega'\) with compact closure \(\overline{\Omega'} \subseteq \Omega \setminus \sing(\Gamma_\infty)\)
	\begin{itemize}
		\item \(\Hm{h_\infty}{n-1}(\Gamma_\infty \cap \Omega') \leq \limsup_{k \to \infty} \Hm{h_k}{n-1}(\Gamma_k \cap \Omega')\), and
		\item the subsequence \(\Gamma_k\) converges smoothly to \(\Gamma_\infty\) (disregarding multiplicity) on \(\Omega'\).
	\end{itemize}
\end{itemize}
\end{thm}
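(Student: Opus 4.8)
The plan is to derive this from the compactness and regularity theory for stable minimal hypersurfaces of Schoen and Simon \cite{SchoenSimon81}; the one point not literally contained there is the uniformity of their estimates under the convergence \(h_k \to h_\infty\) of the ambient metrics, which is harmless because all of those estimates are local and perturbative in the metric. I would argue in three stages: extract a varifold limit, upgrade it to smooth convergence away from a small set by the Schoen--Simon estimates, and then pass stability to the limit.

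\emph{Stage 1 (a varifold limit).} Fix \(h_\infty\) as a reference metric and let \(V_k\) be the multiplicity-one integral varifold carried by \(\Gamma_k\) (well defined since \(\sing(\Gamma_k)\) is \(\Hm{h_\infty}{n-1}\)-null). The local area bounds together with \(h_k \to h_\infty\) give locally uniform mass bounds for the \(V_k\). Since \(\Gamma_k\) is stationary for \(h_k\), the varifold \(V_k\) has vanishing --- hence locally bounded --- first variation with respect to \(h_k\), so Allard's integral compactness theorem, applied along an exhaustion of \(\Omega\) with a diagonal subsequence, yields an integral varifold \(V\) with \(V_k \to V\) as varifolds. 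Because the first-variation functional is jointly continuous in the varifold (weak-\(*\)) and the metric (\(C^1_{\mathrm{loc}}\)), passing to the limit in the stationarity of \(V_k\) for \(h_k\) shows that \(V\) is stationary for \(h_\infty\). Put \(\Gamma_\infty \coloneqq \reg(V)\) and \(\sing(\Gamma_\infty) \coloneqq \operatorname{spt} V \setminus \Gamma_\infty\). Lower semicontinuity of mass on open sets gives \(\Hm{h_\infty}{n-1}(\Gamma_\infty \cap \Omega') \leq \limsup_k \Hm{h_k}{n-1}(\Gamma_k \cap \Omega')\) for \(\Omega' \Subset \Omega \setminus \sing(\Gamma_\infty)\); and, choosing \(p_k \in \overline{\Gamma_k} \cap K\) with \(p_k \to p_\infty \in K\), the monotonicity formula for \(V_k\) yields a uniform lower density bound near \(p_k\), so that lower semicontinuity forces \(p_\infty \in \operatorname{spt} V\), and \(\Gamma_\infty\) then meets \(K\) because by Stage 2 the singular set is lower-dimensional.

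\emph{Stage 2 (regularity and smooth convergence).} Here I would invoke \cite{SchoenSimon81}. Since a set of vanishing \((n-3)\)-dimensional Hausdorff measure has vanishing \((n-2)\)-dimensional Hausdorff measure, the hypothesis \(\Hm{h_\infty}{n-3}(\sing(\Gamma_k)) = 0\) guarantees that each \(\Gamma_k\) satisfies the smallness condition on its singular set under which Schoen and Simon establish their interior curvature estimate for stable minimal hypersurfaces. The constants in that estimate depend on the ambient metric only through finitely many of its \(C^r\)-norms, and on the local area bound, hence are uniform in \(k\); combining this with their dimension-reduction argument produces a further subsequence along which \(\Gamma_k \to \Gamma_\infty\) smoothly, with locally constant integer multiplicity, on \(\Omega \setminus \sing(\Gamma_\infty)\), where \(\sing(\Gamma_\infty)\) is closed and satisfies \(\Hm{h_\infty}{\alpha}(\sing(\Gamma_\infty)) = 0\) for every \(\alpha > n - 8\) (in particular \(\sing(\Gamma_\infty) = \emptyset\) when \(n \leq 7\)). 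This multiplicity is precisely what renders the varifold convergence \(V_k \to V\) compatible with the smooth convergence, identifying \(V\) with the varifold carried by \(\Gamma_\infty\).

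\emph{Stage 3 (stability of the limit), and the main obstacle.} On any ball \(B \Subset \Omega \setminus \sing(\Gamma_\infty)\) the smooth, multi-sheeted convergence lets one pass the stability inequality to the limit: for \(\phi \in C^\infty_c(B)\) the quantities \(\int_{\Gamma_k} |\nabla_{h_k}\phi|^2\) and \(\int_{\Gamma_k} \bigl(|A_{h_k}|^2 + \Ric_{h_k}(\nu_k,\nu_k)\bigr)\phi^2\) converge, sheet by sheet, to their \(h_\infty\)-analogues for \(\Gamma_\infty\), so each sheet of \(\Gamma_\infty\), and therefore \(\Gamma_\infty\) itself, is stable for \(h_\infty\). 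The real difficulty lies entirely in Stage 2: one must extract from \cite{SchoenSimon81} --- whose statements are phrased for a fixed ambient metric --- a compactness theorem with \emph{variable} metrics \(h_k \to h_\infty\) and with the stated \(\Hm{h_\infty}{\alpha}\)-control on the singular set of the limit. Concretely, this means verifying that the curvature estimate and the dimension-reduction scheme depend on the metric only through a fixed finite jet, uniformly along the sequence; it also means ordering the argument so that the smooth convergence is obtained before, or together with, the limiting stability inequality --- to avoid circularity --- and keeping track of the integer multiplicity of the limit throughout.
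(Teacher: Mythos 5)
Your proposal is correct and takes essentially the same route as the paper: both arguments derive the compactness statement from the curvature estimate and regularity/compactness theory of Schoen and Simon \cite{SchoenSimon81}, and both correctly identify that the only piece not literally in \cite{SchoenSimon81} is the uniformity of the constants when the ambient metrics \(h_k\) vary (handled because the estimates are local and perturbative, with \(h_k \to h_\infty\) in \(C^r_{\mathrm{loc}}\)).

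The organizational difference is small but worth noting. The paper covers \(\Omega\) by small balls, applies Schoen--Simon's Theorem 2 (the compactness theorem for stable hypersurfaces) directly in each ball to get a \(C^2\) limit with the codimension-seven singular set bound, patches the local limits by a diagonal argument, upgrades varifold convergence to Hausdorff-distance convergence via their inequality (1.18), and then uses their Theorem 1 (the curvature estimate) plus elliptic bootstrap to obtain the smooth multi-sheeted convergence. You instead begin with Allard's integral compactness theorem --- which uses only the locally bounded first variation coming from stationarity plus \(h_k\to h_\infty\), not stability --- to produce the varifold limit and pass stationarity to it, and only then invoke the Schoen--Simon curvature estimate together with their dimension-reduction scheme (i.e., essentially the proof of their Theorem 2) to obtain regularity and smooth convergence away from \(\sing(\Gamma_\infty)\). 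Your route is slightly more modular in separating what is soft GMT from what needs stability, at the cost of re-deriving a piece of SS Theorem 2 rather than citing it; the paper's route is shorter because it uses Theorem 2 as a black box for the compactness with small singular set. Your Stage 3 (passing stability to the limit via sheet-by-sheet smooth convergence) corresponds to what the paper obtains directly from the conclusion of SS Theorem 2. One small imprecision: your density/monotonicity argument establishes \(p_\infty \in \operatorname{spt} V = \overline{\Gamma_\infty}\); deducing \(\Gamma_\infty \cap K \neq \emptyset\) requires using that \(K\) is not entirely contained in the codimension-seven singular set (true for the thick \(K\) used in the paper), which you gesture at but do not spell out --- the paper's own wording has the same slight imprecision, so this is not a real gap.
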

The smooth convergence of \(\Gamma_k\) to \(\Gamma_\infty\) on \(\Omega'\), disregarding multiplicity, means that if \(k\) is sufficiently large then in normal coordinates for \(\Gamma_\infty\), each \(\Gamma_k\) is the union of graphs of a finite number of smooth functions on \(\Gamma_\infty\), and these functions converge in \(C^\infty(\Gamma_\infty \cap \Omega')\) to \(0\) as \(k \to \infty\).
\begin{proof}
In this proof, we let \(\sing_{C^2}(\Gamma)\) denote the singular part of \(\Gamma\) in the \(C^2\) sense.
Cover \(\Omega\) by balls of a fixed small radius.
Applying \cite[Theorem~2]{SchoenSimon81} in each ball we obtain a (possibly empty) stable \(C^2\) hypersurface \(\Gamma_\infty\), with \(\Hm{h_\infty}{\alpha}(\sing_{C^2}(\Gamma_\infty)) = 0\) if \(\alpha > n - 8\) and \(\alpha \geq 0\), to which a subsequence \(\Gamma_k\) converges in the sense of varifolds.
Moreover, there is at least one ball in which \(\Gamma_\infty \cap K \neq \emptyset\).
Taking a diagonal subsequence we can patch these locally defined hypersurfaces together to a hypersurface \(\Gamma_\infty \subset \Omega\) with \(\Gamma_\infty \cap K \neq \emptyset\).

Let \(\Omega'\) be an open set with compact closure \(\overline{\Omega'} \subseteq \Omega \setminus \sing_{C^2}(\Gamma_\infty)\).
Since \(\Gamma_k \to \Gamma_\infty\) as varifolds, we have \(\Hm{h_\infty}{n-1}(\Gamma_\infty \cap \Omega') \leq \limsup_{k \to \infty} \Hm{h_k}{n-1}(\Gamma_k \cap \Omega')\).

Using \cite[Inequality~(1.18)]{SchoenSimon81} one can prove that varifold convergence implies convergence in Hausdorff distance (see also \cite[Remark~10 on p.~779]{SchoenSimon81}).
Hence \cite[Theorem~1]{SchoenSimon81} is applicable at every point of \(\Gamma_\infty \cap \Omega'\), telling us that the hypersurfaces \(\Gamma_k \cap \Omega'\) eventually become graphs of functions over the tangent space of \(\Gamma_\infty \cap \Omega'\) at the chosen point.
This theorem also gives uniform \(C^2\) bounds for these functions, which implies subsequential convergence in \(C^{1,\alpha}\) for all \(\alpha \in (0, 1)\).
The hypersurfaces \(\Gamma_k \cap \Omega'\) are graphs of functions satisfying the minimal surface equations for a converging sequence of metrics.
By ellipticity of the minimal surface equations, we conclude \(C^\infty\) convergence and that \(\Gamma_\infty \cap \Omega'\) is smooth, which implies that \(\sing(\Gamma_\infty) = \sing_{C^2}(\Gamma_\infty)\).
\end{proof}

Having completed these preliminaries, we continue the study of the location of outer area minimizing stationary bounding hypersurfaces in \((\R^n \setminus S, g_\epsilon)\).
Since the metrics \(g_\epsilon\) converge uniformly to the Euclidean metric outside of a neighborhood of \(S\) as \(\epsilon \to 0\) we conclude the following.
\begin{prop}\label{prop:large-coordinate-sphere}
Let \(\sphere^{n-1}(R)\) denote the sphere of radius \(R\) around the origin in \(\R^n\).
There is a constant \(\Rend\) such that, for small \(\epsilon\), it holds that \(\sphere^{n-1}(R)\) has positive mean curvature in \((\R^n, g_\epsilon)\) if \(R \geq \Rend\).
\end{prop}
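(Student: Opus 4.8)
The plan is to compute the mean curvature of the coordinate sphere \(\sphere^{n-1}(R)\) in \(g_\epsilon\) by means of the conformal transformation law for mean curvature, and then to check that the Euclidean contribution \((n-1)/R\) dominates the correction coming from \(\conffact{\epsilon}\), uniformly for all large \(R\), once \(\epsilon\) is small. First I would fix \(R_0 := 1 + 2\max_{y \in S}|y|\), so that for \(|x| \geq R_0\) and \(y \in S\) one has \(|x - y| \geq |x|/2\); in particular \(\sphere^{n-1}(R)\) is disjoint from \(S\) for \(R \geq R_0\), so \(g_\epsilon\) is a smooth metric near it. By the conformal transformation law for mean curvature (as used in the proof of \prettyref{lem:mean-curvature-of-cylinders}), a hypersurface with Euclidean unit normal \(\nu\) and Euclidean mean curvature \(H_\delta\) has mean curvature
\[
\conffact{\epsilon}^{-2/(n-2)}\left(H_\delta + \frac{2(n-1)}{n-2}\,\partial_\nu \ln \conffact{\epsilon}\right)
\]
in \(g_\epsilon = \conffact{\epsilon}^{4/(n-2)}\delta\). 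Applying this with \(\nu = \partial_r\) (the outward normal) and \(H_\delta = (n-1)/R\), the mean curvature of \(\sphere^{n-1}(R)\) in \(g_\epsilon\) has the same sign as
\[
\frac{n-1}{R} + \frac{2(n-1)}{n-2}\,\frac{\partial_r \conffact{\epsilon}}{\conffact{\epsilon}}.
\]

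Next I would bound the correction term. Since \(\nabla_x |x - y|^{-(n-2)} = -(n-2)|x - y|^{-n}(x - y)\) has Euclidean norm \((n-2)|x-y|^{-(n-1)}\), differentiating under the integral sign gives
\[
|\partial_r \conffact{\epsilon}(x)| \leq (n-2)\,\epsilon^{n-m-2} \int_S |x - y|^{-(n-1)}\,dy,
\]
and for \(|x| = R \geq R_0\) the estimate \(|x - y| \geq R/2\) on \(S\) yields \(|\partial_r \conffact{\epsilon}(x)| \leq (n-2)\,2^{n-1}\,\HmE{m}(S)\,\epsilon^{n-m-2}\,R^{-(n-1)}\). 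Using \(\conffact{\epsilon} \geq 1\) and \(R \geq R_0\) we obtain, on \(\sphere^{n-1}(R)\),
\[
\begin{split}
\frac{n-1}{R} + \frac{2(n-1)}{n-2}\,\frac{\partial_r \conffact{\epsilon}}{\conffact{\epsilon}}
&\geq
\frac{n-1}{R}\left(1 - 2^{n}\,\HmE{m}(S)\,\epsilon^{n-m-2}\,R^{-(n-2)}\right) \\
&\geq
\frac{n-1}{R}\left(1 - 2^{n}\,\HmE{m}(S)\,\epsilon^{n-m-2}\,R_0^{-(n-2)}\right),
\end{split}
\]
where \(n - 2 > 0\) was used in the last step.

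Finally, since \(n - m \geq 3\) we have \(n - m - 2 \geq 1\), so \(\epsilon^{n-m-2} \to 0\) as \(\epsilon \to 0\); hence there is \(\epsilon_0 > 0\) such that the bracket above is at least \(1/2\) for all \(0 < \epsilon \leq \epsilon_0\). Consequently the mean curvature of \(\sphere^{n-1}(R)\) in \((\R^n, g_\epsilon)\) is positive for every \(R \geq R_0\), and \(\Rend := R_0\) works. I do not expect any genuine obstacle here; the one point to keep in mind is that it does not suffice to say merely that \(g_\epsilon\) converges uniformly to \(\delta\) away from \(S\), because the Euclidean mean curvature \((n-1)/R\) of a large sphere is itself small. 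What makes the estimate go through uniformly in \(R \geq \Rend\) is that \(\partial_r \conffact{\epsilon}\) decays one power of \(R\) faster than \(1/R\), so the correction is genuinely negligible against the main term.
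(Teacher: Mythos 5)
Your proof is correct, and it is more careful than what the paper offers. The paper does not give a proof at all: it states, immediately before the proposition, only that the metrics \(g_\epsilon\) converge uniformly to \(\delta\) outside a neighborhood of \(S\), and then asserts the conclusion. As you rightly observe, that one line does not by itself deliver a statement that is uniform over all \(R \geq \Rend\), because the quantity being perturbed, namely the Euclidean mean curvature \((n-1)/R\), itself tends to zero as \(R \to \infty\). Your explicit calculation is exactly the missing ingredient: using the conformal change formula (the same one the paper invokes in \prettyref{lem:mean-curvature-of-cylinders}), you show that \(\partial_r \conffact{\epsilon}\) decays like \(R^{-(n-1)}\) with a coefficient of size \(O(\epsilon^{n-m-2})\), one power of \(R\) faster than the main term \((n-1)/R\), so that for small \(\epsilon\) the correction is dominated uniformly in \(R \geq \Rend\). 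This is precisely what is needed in \prettyref{prop:minsurf-a-priori-bound} and \prettyref{prop:outer-minimizing-existence}, where the paper uses that the region outside \(\sphere^{n-1}(\Rend)\) is foliated by positive mean curvature spheres. In short, you did not take a different route; you filled in the quantitative estimate the paper leaves implicit, and you correctly identified why it cannot be dispensed with.
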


Using the maximum principle of Solomon and White \cite{SolomonWhite89} we can prove the following proposition.

\begin{prop}\label{prop:minsurf-a-priori-bound}
Every stationary bounding hypersurface \(\Sigma\) in \((\R^n \setminus S, g_\epsilon)\) is contained in the region between \(\Tub{S}{\Cinner \epsilon}\) and \(\sphere^{n-1}(\Rend)\).
\end{prop}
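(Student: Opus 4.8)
The plan is to play off the tubular hypersurfaces of \prettyref{prop:inner-barriers} near \(S\) against the coordinate spheres of \prettyref{prop:large-coordinate-sphere} far out, and to conclude with \solomonwhite. Enlarging \(\Rend\) if necessary, we may assume \(S \subset B_{\Rend}(0)\). Since \(\Sigma\) is bounding, its closure \(\overline\Sigma\) is a compact subset of \(\R^n \setminus S\); in particular it is bounded and lies at positive Euclidean distance from \(S\), so that \(a^* \coloneqq \min_{x \in \overline\Sigma} \dist^\delta(x, S)\) is positive and \(R^* \coloneqq \max_{x \in \overline\Sigma} |x|\) is finite. Using that for small \(a\) the hypersurface \(\Tub{S}{a}\) is exactly the level set \(\{\dist^\delta(\cdot, S) = a\}\) and bounds the solid tube \(\{\dist^\delta(\cdot, S) < a\}\), the claim is equivalent to the two inequalities \(a^* \geq \Cinner \epsilon\) and \(R^* \leq \Rend\), both of which I would establish by contradiction.

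Suppose first \(R^* > \Rend\), and pick \(p \in \overline\Sigma\) with \(|p| = R^*\). Since \(R^* > \Rend\), the sphere \(\sphere^{n-1}(R^*)\) is disjoint from \(S\), so \(p\) has a neighborhood contained in \(\R^n \setminus S\), where \(\Sigma\) is stationary. By the choice of \(R^*\), in this neighborhood \(\overline\Sigma\) lies in the closed ball \(\overline{B_{R^*}(0)}\), hence touches \(\sphere^{n-1}(R^*)\) at the interior point \(p\) from the side of the ball. By \prettyref{prop:large-coordinate-sphere} the mean curvature of \(\sphere^{n-1}(R^*)\) with respect to the outward normal is positive, so its mean curvature vector points into the ball, that is, into the side of \(\sphere^{n-1}(R^*)\) containing \(\Sigma\). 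Then, by \solomonwhite, \(\Sigma\) would coincide with \(\sphere^{n-1}(R^*)\) in a neighborhood of \(p\), which is impossible because \(\sphere^{n-1}(R^*)\) has nonzero mean curvature while \(\Sigma\) is stationary. Hence \(R^* \leq \Rend\).

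Suppose next \(a^* < \Cinner \epsilon\), and pick \(p \in \overline\Sigma\) with \(\dist^\delta(p, S) = a^*\). For \(\epsilon\) small, \(\Tub{S}{a^*}\) is a smooth hypersurface contained in \(\R^n \setminus S\), and by the choice of \(a^*\), near \(p\) the set \(\overline\Sigma\) lies in the closed region \(\{\dist^\delta(\cdot, S) \geq a^*\}\) exterior to \(\Tub{S}{a^*}\), touching it at the interior point \(p\). Since \(0 < a^* < \Cinner \epsilon\), \prettyref{prop:inner-barriers} gives that \(\Tub{S}{a^*}\) has negative mean curvature with respect to the normal pointing towards the asymptotically Euclidean end, so its mean curvature vector points away from \(S\), that is, into the exterior region containing \(\Sigma\). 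Again by \solomonwhite, \(\Sigma\) would coincide with \(\Tub{S}{a^*}\) near \(p\), contradicting that \(\Tub{S}{a^*}\) is not minimal. Hence \(a^* \geq \Cinner \epsilon\), which completes the argument.

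The step that demands the most care is the use of \solomonwhite. In each case one must check that \(\overline\Sigma\) lies on a single side of the smooth barrier in a neighborhood of the extremal point --- which is precisely what the choice of \(p\) as a maximizer of \(|x|\), respectively a minimizer of \(\dist^\delta(x, S)\), over the compact set \(\overline\Sigma\) guarantees --- that the barrier's mean curvature vector points towards that side, and that the barrier together with the relevant portion of \(\overline\Sigma\) lies in \(\R^n \setminus S\), where \(\Sigma\) is a stationary varifold. The remaining ingredients are just the barrier inequalities already proved in \prettyref{prop:inner-barriers} and \prettyref{prop:large-coordinate-sphere}.
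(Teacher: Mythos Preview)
Your argument is correct and follows essentially the same line as the paper's proof: take the extremal value of \(\dist^\delta(\cdot,S)\) (respectively \(|\cdot|\)) on the compact set \(\overline\Sigma\), invoke \prettyref{prop:inner-barriers} (respectively \prettyref{prop:large-coordinate-sphere}) to see that the corresponding tubular hypersurface or coordinate sphere has strictly signed mean curvature, and derive a contradiction from \solomonwhite. The only cosmetic differences are your explicit enlargement of \(\Rend\) so that \(S\subset B_{\Rend}(0)\) and the precise phrasing of the Solomon--White conclusion; neither affects the substance.
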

\begin{proof}
Let \(a \coloneqq \inf_{x \in \Sigma} \dist^\delta(x, S)\).
It holds that \(0 < a\) since \(\overline \Sigma \subset \R^n \setminus S\) is compact.
Let \(N\) denote the region outside \(\Tub{S}{a}\).
Suppose for contradiction that \(a \leq \Cinner \epsilon\).
By \prettyref{prop:inner-barriers} the hypersurface \(\partial N = \Tub{S}{a}\) has negative mean curvature.
Since \(\Sigma\) is a stationary hypersurface with \(\overline \Sigma \cap \partial N \neq \emptyset\), \solomonwhite{} implies that the mean curvature of at least one connected component of \(\partial N\) is zero, which is a contradiction.
Hence \(\Sigma\) is contained in the region outside \(\Tub{S}{\Cinner \epsilon}\).

A similar argument using \(a \coloneqq \sup_{x \in \Sigma} \dist^\delta(x, 0)\) and \prettyref{prop:large-coordinate-sphere} proves that \(\Sigma\) is contained in the region inside \(\sphere^{n-1}(\Rend)\).
\end{proof}

The previous result restricts the location of outer area minimizing stationary hypersurfaces to within a large coordinate sphere.
We will now prove that they actually collapse to \(S\) as \(\epsilon \to 0\).
The proof is inspired by the proof of \cite[Theorem~3.2]{ChruscielMazzeo03}.

\begin{prop}\label{prop:convergence-without-rescaling}
Let \(R > 0\).
For sufficiently small \(\epsilon > 0\), every outer area minimizing stationary hypersurface \(\Sigma\) in \((\R^n \setminus S, g_\epsilon)\) with \(\HmE{n-3}(\sing(\Sigma)) = 0\) is enclosed by \(\Tub{S}{R}\).
\end{prop}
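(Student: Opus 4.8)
The plan is to argue by contradiction using the convergence theorem for stable hypersurfaces (\prettyref{thm:minimal-surface-convergence}), following the strategy of Chru\'sciel and Mazzeo. Suppose the statement fails for some fixed $R > 0$. Then there is a sequence $\epsilon_k \to 0$ and outer area minimizing stationary bounding hypersurfaces $\Sigma_k$ in $(\R^n \setminus S, g_{\epsilon_k})$ with $\HmE{n-3}(\sing(\Sigma_k)) = 0$, each of which contains a point $p_k$ with $\dist^\delta(p_k, S) \geq R$; by \prettyref{prop:minsurf-a-priori-bound} we may also assume $p_k$ lies inside $\sphere^{n-1}(\Rend)$, so after passing to a subsequence $p_k \to p_\infty$ with $R \leq \dist^\delta(p_\infty, S) \leq \Rend$. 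In particular $p_\infty \in \R^n \setminus S$.

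Next I would set up the hypotheses of \prettyref{thm:minimal-surface-convergence} on the open manifold $\Omega = \R^n \setminus S$ with $h_k = g_{\epsilon_k}$ and $h_\infty = \delta$. The metrics $g_{\epsilon_k}$ converge to $\delta$ in every $C^r$ uniformly on sets with compact closure in $\R^n \setminus S$, since on such a set $\conffact{\epsilon_k} \to 1$ smoothly (the integrand stays bounded away from its singularity and $\epsilon_k^{n-m-2} \to 0$). Each $\Sigma_k$ is stationary and outer area minimizing, hence stable, and $\HmE{n-3}(\sing(\Sigma_k)) = 0$ by hypothesis. The only nontrivial hypothesis to check is the uniform local area bound $\limsup_k \Hm{g_{\epsilon_k}}{n-1}(\Sigma_k \cap \Omega') < \infty$ for each $\Omega'$ with $\overline{\Omega'} \subseteq \R^n \setminus S$: this follows from \prettyref{lem:local-area-bound} applied with $N$ a slightly larger open set with $\overline N \subset \R^n \setminus S$, together with the fact that $\Hm{g_{\epsilon_k}}{n-1}(\partial N)$ converges to the finite value $\HmE{n-1}(\partial N)$. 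Taking $K$ to be a small closed $\delta$-ball around $p_\infty$ contained in $\R^n \setminus S$ (which meets $\overline{\Sigma_k}$ since $p_k \in \Sigma_k$ and $p_k \to p_\infty$), the theorem produces a subsequence converging as varifolds to a nonempty stable hypersurface $\Sigma_\infty$ in $(\R^n \setminus S, \delta)$ with $\HmE{\alpha}(\sing(\Sigma_\infty)) = 0$ for $\alpha > n-8$, and with $\Sigma_\infty \cap K \neq \emptyset$.

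Finally I would derive a contradiction from the existence of $\Sigma_\infty$. Since varifold limits of stationary varifolds are stationary, $\Sigma_\infty$ is a stationary hypersurface for the \emph{Euclidean} metric $\delta$. One then shows $\Sigma_\infty$ is bounding: varifold convergence together with the uniform local area bounds gives that the closure of $\Sigma_\infty$ is compact in $\R^n \setminus S$ (it cannot escape to infinity by \prettyref{prop:minsurf-a-priori-bound}, nor accumulate at $S$), and passing to the limit in the regions $E_k$ inside $\Sigma_k$ produces a region inside $\Sigma_\infty$; moreover $\Sigma_\infty$ is outer area minimizing for $\delta$ by lower semicontinuity of perimeter under the varifold/$L^1$ convergence. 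But a compact minimal (indeed stationary) hypersurface in Euclidean space $\R^n$, bounding a compact region, violates the maximum principle against large coordinate spheres --- equivalently, there is no compact minimal hypersurface in $\R^n$ at all, since the coordinate function giving the largest distance from the origin attains an interior maximum on $\overline{\Sigma_\infty}$, forcing the mean curvature there to be nonzero (or using \solomonwhite{} against the foliation by spheres $\sphere^{n-1}(r)$). This contradiction proves the proposition.

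The main obstacle I expect is the bookkeeping in the second step: verifying that the varifold limit $\Sigma_\infty$ genuinely inherits the "bounding" property and the outer area minimizing property, i.e.\ controlling the Caccioppoli sets $E_k$ and ensuring no area or boundary is lost in the limit near $S$ or at infinity. The smooth convergence away from $\sing(\Sigma_\infty)$ provided by \prettyref{thm:minimal-surface-convergence}, combined with the a priori location bounds of \prettyref{prop:minsurf-a-priori-bound}, should be exactly what is needed to make this rigorous.
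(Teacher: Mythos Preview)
Your overall architecture matches the paper's: argue by contradiction, pass to a limit via \prettyref{thm:minimal-surface-convergence} to get a nonempty stable hypersurface \(\Sigma_\infty\) in \((\R^n\setminus S,\delta)\), and then rule this out by a Euclidean maximum principle argument. The setup of the convergence theorem and the area bounds are essentially as in the paper.

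The genuine gap is your claim that the closure of \(\Sigma_\infty\) is compact in \(\R^n\setminus S\), i.e.\ that \(\Sigma_\infty\) cannot accumulate at \(S\). This is not ``bookkeeping'': by \prettyref{prop:minsurf-a-priori-bound} the hypersurfaces \(\Sigma_k\) are only bounded away from \(S\) by \(\Cinner\epsilon_k\to 0\), so there is nothing preventing the limit from touching \(S\). Neither the local area bounds nor the outer area minimizing property give you a uniform lower bound on \(\dist^\delta(\Sigma_k,S)\). Without this, your final step breaks down: \(\Sigma_\infty\) is only stationary in \(\R^n\setminus S\), and sliding spheres or hyperplanes in from infinity may first touch \(\overline{\Sigma_\infty}\) at a point of \(S\), where the Solomon--White maximum principle says nothing.

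The paper does \emph{not} try to keep \(\Sigma_\infty\) away from \(S\). Instead it proves that \(\Sigma_\infty\) is stationary in all of \((\R^n,\delta)\), via a removable-singularity (capacity) argument: with cutoffs \(\eta_\rho\) supported outside \(\Tub{S}{\rho}\), one estimates \(\int_{\Sigma_\infty}|d\eta_\rho|\,d\Hm{\delta}{n-1}\) using \prettyref{lem:local-area-bound} on the \(\Sigma_k\) with \(N\) the annular region between \(\Tub{S}{\rho}\) and \(\Tub{S}{2\rho}\), obtaining a bound of order \(\rho^{\,n-m-2}\). This is where the codimension hypothesis \(n-m\geq 3\) enters: it makes the exponent positive, so the contribution vanishes as \(\rho\to 0\), and the first-variation formula extends across \(S\). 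Once \(\Sigma_\infty\) is stationary in \(\R^n\), the hyperplane (or sphere) maximum principle yields the contradiction. You should replace your unjustified non-accumulation claim with this cutoff argument.
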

\begin{proof}
Suppose for contradiction that there is a sequence \(\epsilon_k \to 0\), a sequence \(\Sigma_k\) of outer area minimizing stationary hypersurfaces in \((\R^n \setminus S, g_{\epsilon_k})\), and a sequence of points \(x_k \in \overline \Sigma_k\) such that \(\dist^\delta(x_k, S) > R\).
Note that \(g_{\epsilon_k}\) converges to the Euclidean metric \(\delta\) uniformly on compact subsets in \(\R^n \setminus S\).
Moreover, since \(\Sigma_k\) is outer area minimizing we have \(\Hm{g_{\epsilon_k}}{n-1}(\Sigma_k) \leq \Hm{g_{\epsilon_k}}{n-1}(\sphere^{n-1}(\Rend))\) which is uniformly bounded.
By \prettyref{thm:minimal-surface-convergence}, applied with \(\Omega\) equal to \(\R^n \setminus S\) and \(K\) being the closure of the region between \(\Tub{S}{R}\) and \(\sphere^{n-1}(\Rend)\), there is then a nonempty stable hypersurface \(\Sigma_\infty\) in \((\R^n \setminus S, \delta)\) such that \(\Hm{\delta}{\alpha}(\sing(\Sigma_\infty) \setminus S) = 0\) for all nonnegative \(\alpha > n - 8\) and
\[
\Hm{\delta}{n-1}(\Sigma_\infty \cap \Omega') \leq \limsup_{k \to \infty} \Hm{g_{\epsilon_k}}{n-1}(\Sigma_k \cap \Omega')
\]
for all compact sets \(\Omega' \subset \R^n \setminus S\).
Moreover, \(\Sigma_\infty\) is enclosed by \(S^{n-1}(\Rend)\) since this holds for each \(\Sigma_k\) by \prettyref{prop:minsurf-a-priori-bound}.

The hypersurface \(\Sigma_\infty\) is stationary in \((\R^n \setminus S, \delta)\).
We now prove that it is stationary as a hypersurface in \((\R^n, \delta)\).
We need to prove that
\[
\int_{\Sigma_\infty} \div^\delta_{\Sigma_\infty}(X) \, d\Hm{\delta}{n-1} = 0
\]
for all compactly supported vector fields \(X\) on \(\R^n\).
Here \(\div^\delta_{\Sigma_\infty}(X)\) is the divergence of \(X\) along the hypersurface \(\Sigma_\infty\) in the Euclidean metric \(\delta\).
For this, we define a family of smooth cut-off functions \(\eta_\rho\) for all sufficiently small \(\rho > 0\) by
\[
\eta_\rho(x)
\coloneqq
\eta\left( \frac{\dist^\delta(x, S)}{\rho} \right)
\]
where \(\eta \colon [0, \infty) \to [0, 1]\) is a smooth increasing function such that \(\eta(1) = 0\) and \(\eta(2) = 1\), with derivative \(\eta'(t) < 2\) for all \(t\).
Since \(\Sigma_k \to \Sigma_\infty\), and each \(\Sigma_k\) is outer area minimizing so that \prettyref{lem:local-area-bound} is applicable, there are constants \(C_1\), \(C_2\), and \(C_3\) such that
\[\begin{split}
\int_{\Sigma_\infty} |d\eta_\rho| \, d\Hm{\delta}{n-1}
&\leq
\frac{2}{\rho} \Hm{\delta}{n-1}(\Sigma_\infty \cap \operatorname{supp}(d\eta_\rho))
\\
& =
\frac{2}{\rho} \Hm{\delta}{n-1}(\Sigma_\infty \cap \{x \in \R^n \mid \rho < \dist^\delta(x, S) < 2\rho\})
\\
& \leq
\frac{C_1}{\rho} \Hm{g_{\epsilon_k}}{n-1}(\Sigma_k \cap \{x \in \R^n \mid \rho < \dist^\delta(x, S) < 2\rho\})
\\
& \leq
\frac{C_1}{\rho} \Hm{g_{\epsilon_k}}{n-1}(\Tub{S}{\rho} \cup \Tub{S}{2\rho})
\\
& \leq
\frac{C_2}{\rho} \Hm{\delta}{n-1}(\Tub{S}{\rho} \cup \Tub{S}{2\rho})
\\
& =
C_3 \rho^{n-m-2},
\end{split}\]
where the intermediate inequalities hold for all sufficiently large \(k\).
Since \(\Sigma_\infty\) is stationary as a hypersurface in \((\R^n \setminus S, \delta)\) it holds that
\[
\int_{\Sigma_\infty} \div^\delta_{\Sigma_\infty}(\eta_\rho X) \, d\Hm{\delta}{n-1}
=
0.\]
Let \(\pi\) denote the \(\delta\)-orthogonal projection of \(T\R^n\) onto \(T\Sigma_\infty\).
Since \(\div^\delta_{\Sigma_\infty}(\eta_\rho X) = \eta_\rho \div^\delta_{\Sigma_\infty}(X) + \pi(X)(\eta_\rho)\) and \(n - m - 2 \geq 1\) we have
\[\begin{split}
\left| \int_{\Sigma_\infty} \div^\delta_{\Sigma_\infty}(X) \, d\Hm{\delta}{n-1} \right|
&=
\lim_{\rho \to 0} \left| \int_{\Sigma_\infty} \eta_\rho \div^\delta_{\Sigma_\infty}(X) \, d\Hm{\delta}{n-1} \right|
\\
&=
\lim_{\rho \to 0} \left|\int_{\Sigma_\infty} \pi(X)(\eta_\rho) \, d\Hm{\delta}{n-1}\right|
\\
&\leq
\left(\sup_{\Sigma_\infty} |X|\right) \lim_{\rho \to 0} \int_{\Sigma_\infty} |d\eta_\rho| \, d\Hm{\delta}{n-1}
\\
&\leq
\left(\sup_{\Sigma_\infty} |X|\right) \lim_{\rho \to 0} C_3 \rho^{n - m - 2}
\\
&=
0.
\end{split}\]
Hence \(\Sigma\) is stationary as a hypersurface in \((\R^n, \delta)\).

Let \(L \colon \R^n \to \R\) be a nonzero linear function.
Let \(a \coloneqq \sup_{x \in \Sigma} L(x)\).
Since \(\overline \Sigma\) is compact, \(a < \infty\).
The hyperplane \(L^{-1}(a)\) is a connected minimal hypersurface in \((\R^n, \delta)\) and \(\Sigma\) is stationary.
According to \solomonwhite, the hyperplane \(L^{-1}(a)\) is contained in \(\overline \Sigma\) which contradicts compactness of \(\overline \Sigma\).
\end{proof}

We summarize the results of this section in the following proposition.

\begin{prop}\label{prop:location-of-stationary-hypersurfaces}
For all sufficiently small \(\epsilon > 0\), every outer area minimizing stationary hypersurface \(\Sigma\) in \((\R^n \setminus S, g_\epsilon)\) with \(\HmE{n-3}(\sing(\Sigma)) = 0\) must be contained in the region between \(\Tub{S}{\Cinner \epsilon}\) and \(\Tub{S}{\Couter \epsilon}\).
\end{prop}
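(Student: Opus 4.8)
The plan is to establish the two inclusions separately, reusing the machinery already in place. The lower bound is immediate: \prettyref{prop:minsurf-a-priori-bound} shows that, for all sufficiently small \(\epsilon\), any stationary bounding hypersurface in \((\R^n \setminus S, g_\epsilon)\)—in particular an outer area minimizing one—is contained in the region outside \(\Tub{S}{\Cinner\epsilon}\). So only the upper bound, that \(\Sigma\) is enclosed by \(\Tub{S}{\Couter\epsilon}\), requires work, and for that I would argue by contradiction, along the lines of the proof of \prettyref{prop:minsurf-a-priori-bound}.

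Assume \(\Sigma \neq \emptyset\) (otherwise there is nothing to prove) and set \(a \coloneqq \sup_{x \in \overline\Sigma} \dist^\delta(x, S)\); since \(\overline\Sigma\) is a compact subset of \(\R^n \setminus S\), this is a positive real number, attained at some \(x_0 \in \overline\Sigma\) with \(\dist^\delta(x_0, S) = a\). Suppose for contradiction that \(a > \Couter\epsilon\), where \(\Couter\) and \(\Router\) are the constants of \prettyref{prop:outer-barriers}. The crucial point is to bound \(a\) above by \(\Router\), so that \(\Tub{S}{a}\) falls in the radius range in which \prettyref{prop:outer-barriers} supplies a barrier; the a priori bound \prettyref{prop:minsurf-a-priori-bound} alone does not suffice, as it only confines \(\Sigma\) inside the large sphere \(\sphere^{n-1}(\Rend)\). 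Instead I would invoke the collapsing estimate \prettyref{prop:convergence-without-rescaling} with \(R = \Router\): for all sufficiently small \(\epsilon\) the hypersurface \(\Sigma\) is enclosed by \(\Tub{S}{\Router}\), and since (for \(\Router\) small) the region inside \(\Tub{S}{\Router}\) is the embedded Euclidean tubular neighborhood \(\{x : 0 < \dist^\delta(x, S) < \Router\}\), this forces \(\overline\Sigma \subseteq \{x : 0 < \dist^\delta(x, S) \leq \Router\}\), hence \(a \leq \Router\).

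Now \(\Couter\epsilon < a \leq \Router\), so \prettyref{prop:outer-barriers} gives that \(\Tub{S}{a}\) has positive mean curvature in \((\R^n \setminus S, g_\epsilon)\) with respect to the normal pointing away from \(S\); flipping the normal, every connected component of \(\Tub{S}{a}\) has negative mean curvature with respect to the normal pointing towards \(S\). Let \(N\) be the region inside \(\Tub{S}{a}\), so that the normal pointing into \(N\) is the one pointing towards \(S\). By the choice of \(a\) we have \(\overline\Sigma \subseteq \overline N\), and \(x_0 \in \overline\Sigma \cap \partial N\) since \(\dist^\delta(x_0, S) = a\), so \(\overline\Sigma\) touches \(\partial N = \Tub{S}{a}\). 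Since \(\Sigma\) is stationary and every connected component of \(\partial N\) has negative mean curvature with respect to the normal pointing into \(N\), \solomonwhite{} implies that the mean curvature of at least one connected component of \(\partial N\) is zero, a contradiction. Hence \(a \leq \Couter\epsilon\), which is the upper bound. Taking \(\epsilon\) below the finitely many thresholds demanded by \prettyref{prop:minsurf-a-priori-bound}, \prettyref{prop:convergence-without-rescaling} (with \(R = \Router\)) and \prettyref{prop:outer-barriers} completes the proof.

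I expect the only genuine subtlety to be bookkeeping the sides and the choice of unit normal in the application of the Solomon--White maximum principle—here \(\Sigma\) lies on the inside of the barrier \(\Tub{S}{a}\), whereas in \prettyref{prop:minsurf-a-priori-bound} it lay on the outside of the inner barrier \(\Tub{S}{\Cinner\epsilon}\)—together with the step of using \prettyref{prop:convergence-without-rescaling} to place \(a\) in the interval \((\Couter\epsilon, \Router]\) on which \prettyref{prop:outer-barriers} produces a mean-convex barrier.
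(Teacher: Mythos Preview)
Your proof is correct and follows essentially the same approach as the paper: the paper's proof is a terse three-sentence version of exactly your argument, invoking \prettyref{prop:minsurf-a-priori-bound} for the inner bound, \prettyref{prop:convergence-without-rescaling} with \(R = \Router\) to confine \(\Sigma\) to the \(\Router\)-tube, and then the foliation by positive mean curvature tubes from \prettyref{prop:outer-barriers} together with the Solomon--White maximum principle ``as in \prettyref{prop:minsurf-a-priori-bound}'' --- which is precisely your supremum-of-distance contradiction argument spelled out.
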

\begin{proof}
By \prettyref{prop:minsurf-a-priori-bound} and \prettyref{prop:convergence-without-rescaling} it is sufficient to prove that \(\Sigma\) does not intersect the region between \(\Tub{S}{\Couter \epsilon}\) and \(\Tub{S}{\Router}\).
\prettyref{prop:outer-barriers} tells us that this region is foliated by hypersurfaces with positive mean curvature, which means that \(\Sigma\) cannot intersect it without contradicting the maximum principle of Solomon and White, as in \prettyref{prop:minsurf-a-priori-bound}.
\end{proof}

\section{Tubularity of outer area minimizing stationary hypersurfaces}

\label{sec:tubularity-of-outer-area-minimizing-stationary-hypersurfaces}

In this section we will prove \prettyref{prop:global-graphicality}, which states that if \(\epsilon > 0\) is sufficiently small, then every outer area minimizing stationary hypersurface \(\Sigma\) in \((\R^n \setminus S, g_\epsilon)\) which encloses the end at \(S\) and has \(\HmE{n-3}(\sing(\Sigma)) = 0\) is diffeomorphic to the unit normal bundle of \(S\).
The idea of the proof is the following.
By using a contradiction argument it is sufficient to study a sequence of such hypersurfaces \(\Sigma_k\) close to \(x_k\) for a convergent sequence \((x_k, \epsilon_k) \to (x_\infty, 0)\).
We prove that, after rescaling around the points \(x_k\), the hypersurfaces \(\Sigma_k\) converge smoothly to a cylinder around \(T_{x_\infty} S\).
For this convergence we use the area bounds from \prettyref{lem:local-area-bound}, which we obtained by using the fact that the hypersurfaces are outer area minimizing.
Since the rescaled hypersurfaces converge smoothly to a cylinder, they must be the union of graphs of a number of smooth functions on the cylinder provided \(k\) is sufficiently large.
By using the outer area minimizing property again we can prove that, near \(x_k\), each hypersurface is the graph of a single smooth function, thereby proving the proposition.

The contradiction part of the argument is contained in \prettyref{prop:global-graphicality}, while the local result is the following proposition.

\begin{prop}\label{prop:local-graphicality}
Let \(x_k \to x_\infty\) be a convergent sequence of points in \(S\) and let \(\epsilon_k \to 0\).
For each \(k\), let \(\Sigma_k\) be an outer area minimizing stationary hypersurface in \((\R^n \setminus S, g_{\epsilon_k})\) which encloses the end at \(S\) and has \(\HmE{n-3}(\sing(\Sigma_k)) = 0\).
Then for all sufficiently large \(k\) there is an open set \(U_k \subseteq S\) containing \(x_k\) such that, in normal coordinates, the set \(\overline \Sigma_k \cap \exp^\delta((\Cinner \epsilon_k, \Couter \epsilon_k) UNU_k)\) is the graph of a smooth function on the unit normal bundle \(UNU_k\) of \(U_k\).
\end{prop}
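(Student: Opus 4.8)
The plan is to pass to rescaled coordinates around \(x_k\) as in \prettyref{sec:rescaled-metrics-and-their-convergence}, apply the Schoen--Simon convergence theorem \prettyref{thm:minimal-surface-convergence} to identify the blow-up limit as the minimal cylinder around \(T_{x_\infty}S\), and then use the outer area minimizing property to cut away all but one sheet. Set \(\hat\Sigma_k \coloneqq (\transportandrescale{x_\infty}{x_k}{\epsilon_k})^{-1}(\Sigma_k)\) and \(h_k \coloneqq \epsilon_k^{-2}(\transportandrescale{x_\infty}{x_k}{\epsilon_k})^{*}(g_{\epsilon_k})\) on \(\Omega \coloneqq T_{x_\infty}\R^n \setminus T_{x_\infty}S\). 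Since \(\transportandrescale{x_\infty}{x_k}{\epsilon_k}\) is an affine map rescaling \(\delta\) by \(\epsilon_k^{2}\), each \(\hat\Sigma_k\) is again outer area minimizing, stationary and hence stable for \(h_k\), it encloses the end at \((\transportandrescale{x_\infty}{x_k}{\epsilon_k})^{-1}(S)\), and \(\HmE{n-3}(\sing(\hat\Sigma_k)) = 0\). First I would record the three limits: by \prettyref{cor:metric-convergence} the metrics \(h_k\) converge in every \(C^{r}\) on compact subsets of \(\Omega\) to \(h_\infty \coloneqq \conffact{\infty}^{4/(n-2)}\delta\); by \prettyref{lem:convergence-of-S} we have \((\transportandrescale{x_\infty}{x_k}{\epsilon_k})^{-1}(S) \to T_{x_\infty}S\) and \((\transportandrescale{x_\infty}{x_k}{\epsilon_k})^{-1}(\Tub{S}{a\epsilon_k}) \to \Tub{T_{x_\infty}S}{a}\) smoothly on compact sets, so that (combining the latter with \prettyref{prop:location-of-stationary-hypersurfaces}) on any compact \(K' \subset \Omega\) the hypersurfaces \(\hat\Sigma_k\) eventually lie in an arbitrarily small neighbourhood of the closed annular region \(\{\Cinner \le \dist^{\delta}(\cdot, T_{x_\infty}S) \le \Couter\}\); and \prettyref{lem:local-area-bound}, applied after rescaling, gives \(\Hm{h_k}{n-1}(\hat\Sigma_k \cap N) \le \Hm{h_k}{n-1}(\partial N)\) for every open \(N\) with compact closure in \(\Omega\) and piecewise smooth boundary, so the rescaled areas are locally uniformly bounded.

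Next I would apply \prettyref{thm:minimal-surface-convergence}, with \(K\) a fixed compact set large enough that the enclosing property of \(\hat\Sigma_k\) forces \(\overline{\hat\Sigma_k} \cap K \neq \emptyset\) (it suffices that \(K\) contain, for each unit normal to \(T_{x_\infty}S\) at the origin, the part of that normal ray with \(\dist^{\delta}(\cdot, T_{x_\infty}S) \in [\Cinner/2, 2\Couter]\)). Along a subsequence, \(\hat\Sigma_k\) converges as varifolds, and smoothly disregarding multiplicity on compact subsets of \(\Omega \setminus \sing(\Gamma_\infty)\), to a nonempty stable stationary hypersurface \(\Gamma_\infty\) in \((\Omega, h_\infty)\) with \(\HmE{\alpha}(\sing(\Gamma_\infty)) = 0\) for \(\alpha > n-8\); passing to the limit in the confinement statement gives \(\Gamma_\infty \subseteq \{\Cinner \le \dist^{\delta}(\cdot, T_{x_\infty}S) \le \Couter\}\). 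This region is foliated by the cylinders \(\Tub{T_{x_\infty}S}{a}\), which by \prettyref{lem:mean-curvature-of-cylinders} have mean curvature in \(h_\infty\) negative for \(a < \hat a\) and positive for \(a > \hat a\). As in the hyperplane argument at the end of the proof of \prettyref{prop:convergence-without-rescaling}, \solomonwhite{}, applied at a point where \(\Gamma_\infty\) touches one of the foliating cylinders from inside or from outside, forces \(\inf_{\Gamma_\infty}\dist^{\delta}(\cdot, T_{x_\infty}S) = \hat a = \sup_{\Gamma_\infty}\dist^{\delta}(\cdot, T_{x_\infty}S)\); hence \(\Gamma_\infty\) is supported in the connected minimal cylinder \(\Tub{T_{x_\infty}S}{\hat a}\). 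Since a stationary integral varifold supported in a connected minimal hypersurface has constant integer multiplicity, \(\Gamma_\infty = \Tub{T_{x_\infty}S}{\hat a}\) with some multiplicity \(\theta \ge 1\); in particular \(\sing(\Gamma_\infty) = \emptyset\), and near the origin \(\Gamma_\infty\) is the graph of the zero function on the unit normal bundle of \(T_{x_\infty}S\).

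It then remains to show \(\theta = 1\), and here the outer area minimizing hypothesis is indispensable. Fix a small ball \(\hat U\) centred at the origin in \(T_{x_\infty}S\). Since \(\sing(\Gamma_\infty) = \emptyset\), smooth convergence disregarding multiplicity says that for all large \(k\) the part of \(\hat\Sigma_k\) over \(\hat U\) is the disjoint union of graphs of functions \(u_k^{1} < \dots < u_k^{\theta}\) over \(\Tub{T_{x_\infty}S}{\hat a}\), ordered by \(\dist^{\delta}(\cdot, T_{x_\infty}S)\), with \(u_k^{i} \to 0\) in \(C^{\infty}\); as \(\hat a \in (\Cinner, \Couter)\) these graphs lie inside the rescaled annular region. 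Because \(\hat\Sigma_k\) encloses the end at \((\transportandrescale{x_\infty}{x_k}{\epsilon_k})^{-1}(S)\) and, by \prettyref{prop:location-of-stationary-hypersurfaces}, has no points outside the annular region, the region inside \(\hat\Sigma_k\) contains the part of the annulus below \(\graph(u_k^{1})\) and excludes the part above \(\graph(u_k^{\theta})\); crossing each graph switches sides, so \(\theta\) is odd. If \(\theta \ge 3\) I would take the competitor obtained by adjoining to the region inside \(\hat\Sigma_k\) the slice of the annulus between \(\graph(u_k^{1})\) and \(\graph(u_k^{2})\) over \(\hat U\): this is an admissible outward variation whose boundary arises from \(\hat\Sigma_k\) by deleting the two graphs over \(\hat U\) — whose \((n-1)\)-areas converge to a fixed positive constant — and adjoining only the lateral piece over \(\partial\hat U\) between heights \(u_k^{1}\) and \(u_k^{2}\), of area at most a constant times \(\max_i(u_k^{i+1} - u_k^{i}) \to 0\). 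For \(k\) large this strictly decreases the area, contradicting outer area minimality; hence \(\theta = 1\). Transporting back by \(\transportandrescale{x_\infty}{x_k}{\epsilon_k}\) and letting \(U_k\) be the image of \(\hat U\) in \(S\) gives the claim (when \(m = 0\) there is no lateral term and the argument is immediate).

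The step I expect to be the main obstacle is the last one: making the competitor estimate actually close. One must bound the lateral boundary contribution over \(\partial\hat U\) — which is unavoidable because the modification has to be localized — against the positive lower bound for the area of the two discarded graphs, and this is exactly where outer area minimizing, rather than mere stationarity and stability, is needed; the same mechanism is what pins the limit multiplicity at one. A second point needing care is identifying \(\Gamma_\infty\) with the entire cylinder \(\Tub{T_{x_\infty}S}{\hat a}\): the maximum principle of Solomon and White must be used on the non-compact leaves of the foliation (via a touching point supplied near the origin by the enclosing property, or in its local form), the confinement from \prettyref{prop:location-of-stationary-hypersurfaces} is needed to keep the limit from escaping the annular region, and the structure theory of stationary varifolds supported in a smooth hypersurface upgrades ``supported in'' to ``equal to''.
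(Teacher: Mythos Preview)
Your overall strategy matches the paper's, and your multiplicity-one argument (parity plus the two-sheet competitor) is a clean variant of the paper's construction. The genuine gap is in the identification of the limit \(\Gamma_\infty\) with the minimal cylinder. You invoke the maximum principle ``at a point where \(\Gamma_\infty\) touches one of the foliating cylinders,'' but \(\Gamma_\infty\) is noncompact, so the supremum \(a = \sup_{\Gamma_\infty} \dist^\delta(\cdot, T_{x_\infty}S)\) need not be attained, and neither the enclosing property nor the local form of Solomon--White supplies such a touching point. The analogy with the hyperplane argument at the end of \prettyref{prop:convergence-without-rescaling} fails precisely because there \(\overline{\Sigma_\infty}\) was compact.

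The paper closes this gap with an additional translation-and-limit step exploiting the translational symmetry of \(h_\infty\) along \(T_{x_\infty}S\): one picks \(\xi_i \in \Gamma_\infty\) with \(\dist^\delta(\xi_i, T_{x_\infty}S) \to a\), translates each \(\xi_i\) back into a fixed compact set, and applies \prettyref{thm:minimal-surface-convergence} once more to the translated hypersurfaces \(\Gamma_\infty^i = \tau_i(\Gamma_\infty)\). The local area bound \(\Hm{h_\infty}{n-1}(\Gamma_\infty \cap \Omega') \le \Hm{h_\infty}{n-1}(\partial\Omega')\) is invariant under these translations, so a further subsequential limit \(\Gamma_\infty^\infty\) exists and now genuinely touches \(\Tub{T_{x_\infty}S}{a}\) from inside at an interior point; Solomon--White then gives the contradiction. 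This second compactness pass is the missing idea, and it is why the paper records the inherited area bound as a separate inequality rather than using it only once.

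A smaller point: transporting back by \(\transportandrescale{x_\infty}{x_k}{\epsilon_k}\) yields a graph over the unit normal bundle of the affine plane \(P_k = \transportandrescale{x_\infty}{x_k}{\epsilon_k}(T_{x_\infty}S)\), not of \(S\). The paper has a final paragraph converting the former into the latter, using that \(|d\Psi_k| \to 0\) and that the tangent planes of \(S\) near \(x_k\) are uniformly close to \(P_k\); this step should not be elided.
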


\begin{proof}
\setcounter{stepnumber}{0}
Throughout the proof, \(\exp^\delta\) denotes the normal exponential map of \(S \subset \R^n\) in the Euclidean metric \(\delta\), and we consider only \(\epsilon > 0\) which are sufficiently small for \(\exp^\delta \colon [\Cinner \epsilon, \Couter \epsilon] UNS \to \R^n\) to be an embedding.

\stepheader{\step:\label{step:rescaling} Rescaling}
Consider the metrics
\[
h_k \coloneqq \epsilon_k^{-2} \left(\transportandrescale{x_\infty}{x_k}{\epsilon_k}\right)^* (g_{\epsilon_k})
\]
on
\[
\Omega \coloneqq T_{x_\infty}\R^n \setminus T_{x_\infty}S.
\]
By \prettyref{cor:metric-convergence} these metrics converge in \(C^r(\Omega')\) for every \(r \geq 0\) and every open set \(\Omega'\) with compact closure \(\overline{\Omega'} \subseteq \Omega\) to
\[
h_\infty
\coloneqq
\conffact{\infty}^{4/(n - 2)} \delta
\]
where
\[
\conffact{\infty}(\zeta)
=
1 + \int_{T_{x_\infty}S} |\zeta - \eta|^{-(n - 2)} \, d\eta.
\]
Define
\[
\Gamma_k \coloneqq \left(\transportandrescale{x_\infty}{x_k}{\epsilon_k}\right)^{-1}(\Sigma_k).
\]
Then \(\Gamma_k\) is an outer area minimizing stable hypersurface for the metric \(h_k\).

\stepheader{\step:\label{step:convergence} Convergence}
We are now going to apply \prettyref{thm:minimal-surface-convergence} with \(\Omega\), \(h_k\), \(h_\infty\), and \(\Gamma_k\) as defined in \prettyref{step:rescaling}, and with \(K \coloneqq [\Cinner/2, 2\Couter] UN_{x_\infty}S\).
\prettyref{prop:location-of-stationary-hypersurfaces} tells us that the intersection \(\Gamma_k \cap K\) is nonempty for sufficiently large \(k\).
By \prettyref{lem:local-area-bound} it holds that
\[
\Hm{h_k}{n-1}(\Gamma_k \cap \Omega')
\leq
\Hm{h_k}{n-1}(\partial \Omega')
\]
for every open set \(\Omega'\) with compact closure \(\overline{\Omega'} \subseteq \Omega\) and piecewise smooth boundary.
This, together with the smooth convergence \(h_k \to h_\infty\), tells us that
\[
\limsup_{k \to \infty} \Hm{h_k}{n-1}(\Gamma_k \cap \Omega')
\leq
\limsup_{k \to \infty} \Hm{h_k}{n-1}(\partial \Omega')
=
\Hm{h_\infty}{n-1}(\partial \Omega').
\]
With these choices, \prettyref{thm:minimal-surface-convergence} is applicable, giving a stable hypersurface \(\Gamma_\infty\) and a subsequence of \(\Gamma_k\), still denoted \(\Gamma_k\), which are such that
\begin{itemize}
	\item \(\Gamma_k \to \Gamma_\infty\) as varifolds,
	\item \(\Hm{h_\infty}{\alpha}(\sing(\Gamma_\infty)) = 0\) if \(\alpha > n - 8\) and \(\alpha \geq 0\),
	\item \(\Gamma_\infty \cap K \neq \emptyset\),
	\item \(\Gamma_\infty\) is a stable hypersurface in the metric \(h_\infty\),
	\item for every open set \(\Omega'\) with compact closure \(\overline{\Omega'} \subseteq \Omega \setminus \sing(\Gamma_\infty)\)
	\begin{itemize}
		\item \(\Hm{h_\infty}{n-1}(\Gamma_\infty \cap \Omega') \leq \limsup_{k \to \infty} \Hm{h_k}{n-1}(\Gamma_k \cap \Omega')\), and
		\item the subsequence \(\Gamma_k\) converges smoothly to \(\Gamma_\infty\) (disregarding multiplicity) on \(\Omega'\).
	\end{itemize}
\end{itemize}
By the above,
\begin{equation}\label{eq:limit-area-bound}
\Hm{h_\infty}{n-1}(\Gamma_\infty \cap \Omega')
\leq
\Hm{h_\infty}{n-1}(\partial \Omega').
\end{equation}

\stepheader{\step: Identifying the limit}\label{step:identifying-the-limit}
We will now show that \(\Gamma_\infty = \Tub{T_{x_\infty} S}{\hat a}\) with \(\hat a\) as in \prettyref{lem:mean-curvature-of-cylinders}.
Suppose for contradiction that \(\Gamma_\infty \setminus \Tub{T_{x_\infty} S}{\hat a} \neq \emptyset\).
Then either \(\Gamma_\infty\) contains points strictly outside of \(\Tub{T_{x_\infty} S}{\hat a}\), or it contains points strictly inside of \(\Tub{T_{x_\infty} S}{\hat a}\).
We derive a contradiction only in the former case; the argument for a contradiction in the latter case is analogous.
Let
\[
a
\coloneqq
\sup_{\xi \in \Gamma_\infty} \dist(\xi, T_{x_\infty} S)
>
\hat a
\]
as illustrated in \prettyref{fig:Gamma-infty}.
We will use a sequence of translations and an application of \prettyref{thm:minimal-surface-convergence} to reduce to the case where the supremum is attained.

Let \(\xi_i \in \Gamma_\infty\) be a sequence of points such that \(\dist(\xi_i, T_{x_\infty} S) \to a\).
As above, let \(K \coloneqq [\Cinner/2, 2\Couter] UN_{x_\infty}S\).
Let \(\Omega_K\) be an open neighborhood of \(K\) with compact closure \(\overline{\Omega_K} \subset \Omega\) and piecewise smooth boundary.
Note that the metric \(h_\infty\) has translational symmetry along \(T_{x_\infty} S\).
For each \(i\) let \(\tau_i\) be a translation along \(T_{x_\infty} S\) with \(\tau_i(\xi_i) \in K\).
Let \(\Gamma_\infty^i \coloneqq \tau_i(\Gamma_\infty) \cap \Omega_K\).
Then, by the translation symmetry of \(h_\infty\) and by inequality \eqref{eq:limit-area-bound},
\[\begin{split}
\Hm{h_\infty}{n-1}(\Gamma_\infty^i)
&=
\Hm{h_\infty}{n-1}(\tau_i(\Gamma_\infty) \cap \Omega_K)
\\
&=
\Hm{h_\infty}{n-1}(\Gamma_\infty \cap \tau_i^{-1}(\Omega_K))
\\
&\leq
\Hm{h_\infty}{n-1}(\partial(\tau_i^{-1}(\Omega_K)))
\\
&=
\Hm{h_\infty}{n-1}(\partial \Omega_K).
\end{split}\]
This means that
\[
\Hm{h_\infty}{n-1}(\Gamma_\infty^i \cap \Omega')
\leq
\Hm{h_\infty}{n-1}(\partial \Omega_K)
\]
for every open set \(\Omega'\) with compact closure \(\overline{\Omega'} \subseteq \Omega_K\).
The hypersurfaces \(\Gamma_\infty^i\) all intersect the compact set \(K\).
By \prettyref{thm:minimal-surface-convergence}, a subsequence of \(\Gamma_\infty^i\) converges to a stable hypersurface \(\Gamma_\infty^\infty\).

\begin{figure}
\centering
\begin{minipage}{.5\textwidth}
  \centering
  \input{fig9.tex}
  \captionof{figure}{The surface \(\Gamma_\infty\) approaches \(\Tub{T_{x_\infty} S}{a}\) from the inside.}
  \label{fig:Gamma-infty}
\end{minipage}%
\begin{minipage}{.5\textwidth}
  \centering
  \input{fig8.tex}
  \captionof{figure}{After translations, the surface \(\Gamma_\infty^\infty\) is tangent to \(\Tub{T_{x_\infty} S}{a}\) from the inside.}
  \label{fig:Gamma-infty-infty}
\end{minipage}
\end{figure}

Let \(\xi_\infty\) be a limit point of the sequence \(\tau_i(\xi_i)\).
Since \(\Gamma_\infty^i \to \Gamma_\infty^\infty\) in Hausdorff distance (see \cite[Remark~10 on p.~779]{SchoenSimon81}) it holds that \(\xi_\infty \in \overline{\Gamma^\infty_\infty}\).
This proves that the supremum \(a\) of \(\dist(\cdot, T_{x_\infty} S)\) on \(\overline{\Gamma_\infty^\infty}\) is attained in an interior point \(\xi_\infty\).
The stationary hypersurface \(\Gamma_\infty^\infty \subset \Omega_K\) is contained in the region inside \(\Tub{T_{x_\infty} S}{a}\), and \(\xi_\infty \in \overline{\Gamma_\infty^\infty} \cap \Tub{T_{x_\infty} S}{a}\) as in \prettyref{fig:Gamma-infty-infty}.
Since we have assumed for contradiction that \(a > \hat a\), \prettyref{lem:mean-curvature-of-cylinders} tells us that \(\Tub{T_{x_\infty} S}{a}\) has positive mean curvature.
As in \prettyref{prop:minsurf-a-priori-bound}, this contradicts the maximum principle of Solomon and White.
Hence \( \sup_{\xi \in \Gamma_\infty} \dist(\xi, T_{x_\infty} S) = \hat a\).
Similarly, \(\inf_{\xi \in \Gamma_\infty} \dist(\xi, T_{x_\infty} S) = \hat a\), so that \({\Gamma_\infty \subseteq \Tub{T_{x_\infty} S}{\hat a}}\).

The hypersurfaces \(\Sigma_k\) enclose the end at \(S\), so each \(\overline \Gamma_k\) separates \(\left(\transportandrescale{x_\infty}{x_k}{\epsilon_k}\right)^{-1}(S)\) from faraway points in \(\R^n\), in the sense that every continuous curve from \(\left(\transportandrescale{x_\infty}{x_k}{\epsilon_k}\right)^{-1}(S)\) from faraway points in \(\R^n\) intersects \(\overline \Gamma_k\).
This means that the hypersurface \(\overline \Gamma_\infty\) separates \(T_{x_\infty} S\) from faraway points in the normal space \(N_{x_\infty} S\).
The only subset of \(\Tub{T_{x_\infty} S}{\hat a}\) which separates \(T_{x_\infty} S\) from such points is \(\Tub{T_{x_\infty} S}{\hat a}\) itself, so \(\overline \Gamma_\infty = \Tub{T_{x_\infty} S}{\hat a}\).
By our convention, \(\Gamma_\infty = \reg \Gamma_\infty\), so it holds that \(\Gamma_\infty\) is nonsingular.

\stepheader{\step: Determining the multiplicity}
To simplify notation we introduce a \emph{rescaling in the normal direction} defined for \(r \in \R\) and \(\zeta \in T_{x_\infty} \R^n\) by
\[
r * \zeta = \pi(\zeta) + r(\zeta - \pi(\zeta)).
\]
Recall that \(\pi\) denotes the orthogonal projection \(\pi \colon T_{x_\infty}\R^n \to T_{x_\infty}S\).

In the previous step we found that the hypersurfaces \(\Gamma_k\) converge smoothly to \(\Tub{T_{x_\infty} S}{\hat a}\) on each compact set, if we disregard multiplicities.
We will now prove that the multiplicity is one.
Choose an open neighborhood \(U \subset T_{x_\infty} S\) of \(0\) with compact closure.
The smooth convergence of \(\Gamma_k\) to \(\Tub{T_{x_\infty} S}{\hat a}\) on \((\Cinner, \Couter) * \Tub{U}{1}\) implies for any \(\rho > 0\) that for all sufficiently large \(k\), there is a finite set of smooth functions
\[
\Psi^1_k, \ldots, \Psi^N_k \colon \Tub{U}{1} \to (\Cinner, \Couter)
\]
such that
\[
\sup_{\substack{\zeta \in \Tub{U}{1}\\i \in \{1, \ldots, N\}}} |\Psi^i_k(\zeta) - \hat a|
<
\rho
\]
and such that the maps \(\omega \mapsto \Psi^i_k(\omega) * \omega\) together give a surjective map
\[
\bigsqcup_{i \in \{1, \ldots, N\}} \Tub{U}{1}
\to
\Gamma_k \cap (\Cinner, \Couter) * \Tub{U}{1},
\]
see \prettyref{fig:Delta-k}.
The functions \((\Psi^i_k)_{1 \leq i \leq N}\) may agree at some points, but since we assume that \(\HmE{n-3}(\sing(\Sigma_k)) = 0\), this intersection set has zero \((n-3)\)-dimensional Hausdorff measure.
Thus, we may order the functions so that \(\Psi^1_k < \Psi^2_k < \cdots < \Psi^N_k\) except on a set with zero \((n-3)\)-dimensional Hausdorff measure.

Our goal is now to show that \(N = 1\).
To do this, we will use an argument similar to the one we used in \prettyref{lem:local-area-bound} to obtain area bounds.
Since \(\Gamma_k\) is bounding, it encloses a region \(E_k\).
Choose an open neighborhood \(V \subset T_{x_\infty} S\) of \(0\) with smooth boundary such that \(\overline V \subset U\).
Let
\[
\outwardvariation{k}
\coloneqq
\{r * \omega \mid \omega \in \Tub{V}{1}, \Psi^1_k(\omega) \leq r \leq \Psi^{N - 1}_k(\omega)\},
\]
as shown in \prettyref{fig:Xi-k}.
The boundary of \(\outwardvariation{k}\) has a \enquote{vertical} component
\[
\partial^\perp \outwardvariation{k}
\coloneqq
\{r * \omega \mid \omega \in \partial \Tub{V}{1}, \Psi^1_k(\omega) \leq r \leq \Psi^{N - 1}_k(\omega)\}.
\]
Of course \(E_k \subseteq E_k \cup \outwardvariation{k}\) so \(\partial(E_k \cup \outwardvariation{k})\) is an outward variation of \(\Gamma_k\).
\begin{figure} 
\centering
\begin{minipage}{.5\textwidth}
  \centering
  \input{fig10.tex}
  \captionof{figure}{The hypersurface \(\Gamma_k\) is the union of graphs of functions \(\Psi^i_k\). The set \(E_k\) is shown in light grey.}
  \label{fig:Delta-k}
\end{minipage}%
\begin{minipage}{.5\textwidth}
  \centering
  \input{fig11.tex}
  \captionof{figure}{The set \(\outwardvariation{k}\) is shown in dark grey.}
  \label{fig:Xi-k}
\end{minipage}
\end{figure}
In this variation the boundary is changed by adding the hypersurface \(\partial (E_k \cup \outwardvariation{k}) \setminus \partial E_k\) and removing the hypersurface \(\partial E_k \setminus \partial (E_k \cup \outwardvariation{k})\).
To estimate the area added and removed, we note that
\[
\partial (E_k \cup \outwardvariation{k}) \setminus \partial E_k
\subseteq
\partial^\perp \outwardvariation{k}
\]
and
\[
\partial E_k \setminus \partial (E_k \cup \outwardvariation{k})
=
\bigcup_{i = 1}^{N - 1} \{\Psi^i_k(\omega) * \omega \mid \omega \in \Tub{V}{1}\},
\]
up to sets of zero \((n-3)\)-dimensional Hausdorff measure.
On the set \((\Cinner, \Couter) * \Tub{U}{1}\) all metrics \(h_k\) and \(h_\infty\) are conformal to the Euclidean metric with uniformly bounded conformal factors.
Therefore there are constants \(C^+\) and \(C^-\) depending on \(U\), \(\Cinner\), and \(\Couter\), but not depending on \(\rho\) and \(k\), such that
\[
\Hm{h_k}{n-1}(\partial^\perp \outwardvariation{k})
\leq
\rho C^+ \Hm{h_\infty}{n-2}(\partial \Tub{V}{1})
\]
and
\[
\Hm{h_k}{n-1}\left(\{\Psi^i_k(\omega) * \omega \mid \omega \in \Tub{V}{1}\}\right)
\geq
C^- \Hm{h_\infty}{n-1}(\Tub{V}{1})
\]
for all \(1 \leq i \leq N - 1\).

The hypersurface \(\Gamma_k\) is outer area minimizing, since \(\Sigma_k\) is outer area minimizing by assumption.
Hence \(\partial(E_k \cup \outwardvariation{k})\) has at least the same area as \(\Gamma_k\) in \(h_k\).
This means that
\[
\Hm{h_k}{n-1}(\partial E_k \setminus \partial (E_k \cup \outwardvariation{k}))
\leq
\Hm{h_k}{n-1}(\partial (E_k \cup \outwardvariation{k}) \setminus \partial E_k).
\]
With the above estimates we get
\[
N - 1
\leq
\frac{
	\rho C^+ \Hm{h_\infty}{n-2}(\partial \Tub{V}{1})
}{
	C^- \Hm{h_\infty}{n-1}(\Tub{V}{1})
}.
\]
Choosing \(\rho\) such that the right hand side is less than \(1\) we conclude that \(N = 1\).
We have now proved that, for all sufficiently large \(k\), there is a single function \(\Psi_k \coloneqq \Psi_k^1 \colon \Tub{U}{1} \to (\Cinner, \Couter)\) such that the map \(\omega \mapsto \Psi_k(\omega) * \omega\) is a diffeomorphism 
\[
\Tub{U}{1}
\to
\Gamma_k \cap (\Cinner, \Couter) * \Tub{U}{1}.
\]

We will now use the function \(\Phi_k\) to write \(\Sigma_k\) as a graph.
Let \(P_k \coloneqq \transportandrescale{x_\infty}{x_k}{\epsilon_k}(T_{x_\infty} S)\) of \(\R^n\).
This is the tangent space of \(S\) at \(x_\infty\) considered as an affine subspace of \(\R^n\).
Define the map
\[\widehat\Psi_k \colon UNP_k \supset UN\widehat U_k \to \Tub{P_k}{\epsilon_k} \to \Tub{T_{x_\infty} S}{1} \to (\Cinner \epsilon_k, \Couter \epsilon_k)\]
by
\[\widehat\Psi_k(\omega) \coloneqq \epsilon_k \Phi_k\left(\left(\transportandrescale{x_\infty}{x_k}{\epsilon_k}\right)^{-1}\left(\exp^\delta(\epsilon_k \omega)\right)\right)\]
where \(\widehat U_k \coloneqq \transportandrescale{x_\infty}{x_k}{\epsilon_k}(U) \subset P_k\).
Let
\[
\widehat \Omega_k
\coloneqq
\exp^\delta((\Cinner \epsilon_k, \Couter \epsilon_k) UN\widehat U_k).
\]
With these choices, \(\overline \Sigma_k\) is the graph of \(\widehat\Psi_k\) over \(\widehat U_k\) in the sense that
\[
\overline \Sigma_k \cap \widehat \Omega_k
=
\exp^\delta\left(\{\widehat\Psi_k(\omega) \omega \mid \omega \in UN\widehat U_k\}\right).
\]
If \(|d\widehat\Psi_k|\) is bounded and there is an open set \(\widetilde \Omega_k \subseteq \widehat \Omega_k\) containing \(x_k\) in which the tangent spaces to \(S\) are sufficiently close to being parallel to \(P_k\), then \(\overline \Sigma_k\) is the graph of a smooth function \(\widetilde \Psi_k\) over some open set \(\widetilde U_k \subset S\) containing \(x_k\) in the sense that
\[
\overline \Sigma_k \cap \widetilde \Omega_k
=
\exp^\delta\left(\{\widetilde \Psi_k(\omega) \omega \mid \omega \in UN\widetilde U_k\}\right),
\]
as shown in \prettyref{fig:local-graphicality}.
\begin{figure}
 \centering{}
 \input{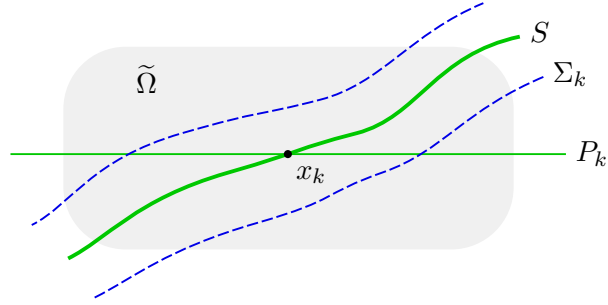}
 \caption{
  Since \(\Sigma_k\) is the graph of a function on \(UNP_k\) with bounded gradient and \(S\) is close to \(P_k\), it follows that it is also the graph of a function on \(UN\widetilde U_k\) for some \(\widetilde U_k \subset S\).
 }
\label{fig:local-graphicality}
\end{figure}
By letting \(k\) be sufficiently large, and letting \(\widetilde \Omega_k \subseteq \widehat \Omega_k\) be sufficiently small, this condition on the tangent spaces of \(S\) is satisfied.
Moreover, \(|d\widehat\Psi_k|\) is bounded since \(|d\Psi_k| \to 0 \) when \(k \to 0\).
Hence, in normal coordinates, the set \(UN\widetilde U_k \cap \overline \Sigma_k\) is the graph of a smooth function on the unit normal bundle \(UN\widetilde U_k\) of \(\widetilde U_k\).
Choosing \(U_k = \widetilde U_k\) completes the proof.
\end{proof}

\prettyref{prop:local-graphicality} globalizes in the following manner.
\begin{prop}\label{prop:global-graphicality}
For sufficiently small \(\epsilon > 0\) it holds that every outer area minimizing stationary hypersurface \(\Sigma\) in \((\R^n \setminus S, g_\epsilon)\) which encloses the end at \(S\) and has \(\HmE{n-3}(\sing(\Sigma)) = 0\) is diffeomorphic to the unit normal bundle \(UNS\).
In fact, each such hypersurface \(\Sigma\) is the graph of a smooth function on \(UNS\) in normal coordinates for \(S\).
\end{prop}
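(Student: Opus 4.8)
The plan is to deduce the global statement from the local \prettyref{prop:local-graphicality} by a contradiction argument run along a sequence. Suppose the proposition fails. Then there are \(\epsilon_k \to 0\) and outer area minimizing stationary hypersurfaces \(\Sigma_k\) in \((\R^n \setminus S, g_{\epsilon_k})\), each enclosing the end at \(S\) and with \(\HmE{n-3}(\sing(\Sigma_k)) = 0\), such that no \(\Sigma_k\) is the graph of a smooth function on \(UNS\) in normal coordinates for \(S\). We restrict to \(k\) large enough that \(\exp^\delta\) is an embedding on \([\Cinner \epsilon_k, \Couter \epsilon_k] UNS\), write \(T_k \coloneqq \exp^\delta((\Cinner \epsilon_k, \Couter \epsilon_k) UNS)\) for the corresponding open solid tube, which is diffeomorphic to \(UNS \times (\Cinner \epsilon_k, \Couter \epsilon_k)\), and let \(p_k \colon T_k \to UNS\) denote the projection \(\exp^\delta(r\, \omega) \mapsto \omega\) for \(\omega \in UNS\) and \(r \in (\Cinner \epsilon_k, \Couter \epsilon_k)\). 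By \prettyref{prop:location-of-stationary-hypersurfaces} we have \(\overline \Sigma_k \subset T_k\) for all large \(k\).

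First I would establish that, for all sufficiently large \(k\), every point \(x \in S\) has a neighborhood \(U \ni x\) in \(S\) such that \(\overline \Sigma_k \cap p_k^{-1}(UNU)\) is the graph of a smooth function on \(UNU\) in normal coordinates, exactly in the sense of \prettyref{prop:local-graphicality}. If this failed, then along a subsequence there would be points \(y_k \in S\) for which no neighborhood of \(y_k\) has this property; passing to a further subsequence, \(y_k \to y_\infty \in S\) by compactness of \(S\). Applying \prettyref{prop:local-graphicality} to the sequence \(x_k \coloneqq y_k\) (with the given \(\epsilon_k \to 0\) and the hypotheses on \(\Sigma_k\)) then produces, for all large \(k\) in this subsequence, a neighborhood \(U_k \ni y_k\) with precisely this property --- contradicting the choice of \(y_k\). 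Note that one cannot instead argue, for a single fixed small \(\epsilon\), by covering \(S\) with finitely many graph neighborhoods and patching: \prettyref{prop:local-graphicality} is itself a statement about sequences \(\epsilon_k \to 0\), so the globalization must also be run along a sequence.

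Now fix a large \(k\) as furnished above. For each \(x \in S\) pick a neighborhood \(U_x \ni x\) in \(S\) and a smooth function \(\psi_k^x \colon UNU_x \to (\Cinner \epsilon_k, \Couter \epsilon_k)\) whose graph \(\{\exp^\delta(\psi_k^x(\omega)\, \omega) : \omega \in UNU_x\}\) equals \(\overline \Sigma_k \cap p_k^{-1}(UNU_x)\), as provided by \prettyref{prop:local-graphicality}. On any overlap \(UNU_x \cap UNU_{x'}\), the graphs of \(\psi_k^x\) and of \(\psi_k^{x'}\) both equal \(\overline \Sigma_k \cap p_k^{-1}(UNU_x \cap UNU_{x'})\), hence \(\psi_k^x = \psi_k^{x'}\) there. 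Since \(\{U_x\}_{x \in S}\) covers \(S\), these functions glue to a single smooth function \(\psi_k \colon UNS \to (\Cinner \epsilon_k, \Couter \epsilon_k)\), and its graph equals \(\bigcup_x \bigl( \overline \Sigma_k \cap p_k^{-1}(UNU_x) \bigr) = \overline \Sigma_k \cap p_k^{-1}(UNS) = \overline \Sigma_k\), where the last equality uses \(\overline \Sigma_k \subset T_k\). Thus \(\Sigma_k\) is the graph of the smooth function \(\psi_k\) on \(UNS\) in normal coordinates for \(S\), and the smooth embedding \(\omega \mapsto \exp^\delta(\psi_k(\omega)\, \omega)\) identifies \(\overline \Sigma_k\) with \(UNS\). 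This contradicts the choice of the sequence \((\Sigma_k)\), and the proposition follows.

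The main obstacle I anticipate is the first reduction. One must extract, from the mere failure of \(\Sigma_k\) to be a global graph, a point sequence \(y_k \in S\) at which local graphicality genuinely fails, so that \prettyref{prop:local-graphicality} can be invoked for a contradiction; here it helps that the set of points of \(S\) admitting a graph neighborhood (in the precise sense of \prettyref{prop:local-graphicality}) is open, so its complement is closed in the compact set \(S\) and, when nonempty, contains points one may select. One should also be a little careful that, because \(\overline \Sigma_k\) is confined to \(T_k\) by \prettyref{prop:location-of-stationary-hypersurfaces}, the local graph pieces supplied by \prettyref{prop:local-graphicality} are genuine full restrictions \(\overline \Sigma_k \cap p_k^{-1}(UNU_x)\) carrying no additional sheets --- which is exactly what lets local graphicality globalize to the single function \(\psi_k\).
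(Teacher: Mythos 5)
Your proposal is correct and follows essentially the same route as the paper: a contradiction along a sequence \(\epsilon_k \to 0\), confinement of \(\overline\Sigma_k\) to the tube via Proposition 4.8, extraction of a convergent sequence of bad points in \(S\), and an appeal to Proposition 5.1. The paper compresses the passage from "locally graphical everywhere" to "globally graphical" into the single assertion that failure of global graphicality yields a bad point \(x_k\); you make that contrapositive explicit by gluing the local graphs on overlaps and noting they agree there because each is the full restriction of \(\overline\Sigma_k\) to the corresponding slab, which is a clean and worthwhile elaboration but not a different proof.
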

\begin{proof}
Suppose for contradiction that this is not true.
Then there is a sequence \(\epsilon_k \to 0\) and outer area minimizing stationary hypersurfaces \(\Sigma_k\) in \((\R^n \setminus S, g_{\epsilon_k})\) such that \(\Sigma_k\) is not the graph of a smooth function on \(UNS\) in normal coordinates for \(S\).
From \prettyref{prop:location-of-stationary-hypersurfaces} we know that \(\overline \Sigma\) is contained in the image of \((\Cinner \epsilon_k, \Couter \epsilon_k) UNS\) in normal coordinates.
Hence, for each \(k\) there is a point \(x_k \in S\) such that if \(U_k \subseteq S\) is an open neighborhood of \(x_k\) then \(\overline \Sigma_k \cap \exp^\delta((\Cinner \epsilon_k, \Couter \epsilon_k) UNU_k)\) is not the graph of a smooth function on \(UNU_k\) in normal coordinates.
Taking a convergent subsequence \(x_k \to x_\infty\), this contradicts \prettyref{prop:local-graphicality}.
\end{proof}

\section{Proof of the main theorem}

\label{sec:proof-of-the-main-theorem}

In this section we prove existence and uniqueness of an \emph{outermost} outer area minimizing stationary hypersurface \(\Sigma_\epsilon\) which encloses the end at \(S\) and has \(\HmE{n-3}(\sing(\Sigma_\epsilon)) = 0\), in other words one which is not enclosed by any other such hypersurface.
Further, we prove that this hypersurface is the outermost apparent horizon, which by definition is the boundary of the trapped region.
This, combined with the conclusion of the previous section proves our main theorem.

There are general results in dimensions \(3 \leq n \leq 7\) stating that the boundary of the trapped region is smooth, and hence is the unique outermost outer area minimizing stationary hypersurface; see \cite[Theorem~5.1]{Eichmair10} and \cite[Theorem~4.6]{AnderssonEichmairMetzger11}.
In our special case, the uniqueness can be deduced by a simpler argument which does not need the dimensional restriction.

First, we prove that there are outer area minimizing stationary hypersurfaces outside any obstacle in the form of a bounding hypersurface with negative mean curvature.

\begin{prop}\label{prop:outer-minimizing-existence}
Let \(T\) be a bounding nonsingular hypersurface in \((\R^n \setminus S, g_\epsilon)\) which encloses the end at \(S\) and has negative mean curvature.
There is at least one outer area minimizing stationary hypersurface \(\Sigma\) in \((\R^n \setminus S, g_\epsilon)\) which encloses \(T\) and is such that \(\Hm{h_\infty}{\alpha}(\sing(\Sigma)) = 0\) if \(\alpha > n - 8\) and \(\alpha \geq 0\).
\end{prop}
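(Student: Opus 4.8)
The plan is to produce $\Sigma$ as the reduced boundary of a perimeter‑minimizing Caccioppoli set, in the classical style of existence results for outer area minimizing hypersurfaces.

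\textbf{Setting up the minimization.} Let $E_T$ be the region inside $T$. By hypothesis its complement is a neighborhood of the asymptotically Euclidean end, and since $T$ encloses $\Tub{S}{a}$ for all small $a>0$ the set $E_T$ contains a punctured Euclidean neighborhood of $S$. Fix $\Rend$ as in \prettyref{prop:large-coordinate-sphere}, enlarged so that $\overline{E_T}$ lies well inside $\sphere^{n-1}(\Rend)$, and let $E_{\Rend}$ be the region inside $\sphere^{n-1}(\Rend)$. Consider the class $\mathcal C$ of Caccioppoli sets $E\subset\R^n\setminus S$ whose complement is a neighborhood of the asymptotically Euclidean end and which satisfy $E_T\subseteq E$ up to a null set. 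Then $\mathcal C\ne\emptyset$ because $E_T\in\mathcal C$, so $I\coloneqq\inf_{E\in\mathcal C}\Hm{g_\epsilon}{n-1}(\partial^*E)<\infty$.

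\textbf{Compactness via the outer barrier.} Since \prettyref{prop:large-coordinate-sphere} provides a foliation of the region outside $\sphere^{n-1}(\Rend)$ by strictly mean-convex coordinate spheres, the set $E_{\Rend}$ is outer area minimizing; hence for any $E\in\mathcal C$ the competitor $E\cap E_{\Rend}$ again lies in $\mathcal C$ and satisfies $\Hm{g_\epsilon}{n-1}(\partial^*(E\cap E_{\Rend}))\leq\Hm{g_\epsilon}{n-1}(\partial^*E)$ by the standard cut-and-paste inequality for reduced boundaries. Therefore $I$ is unchanged if we restrict to competitors contained in $E_{\Rend}$, and a minimizing sequence may be taken inside $E_{\Rend}$; its reduced boundaries then lie in the compact region between $\overline T$ and $\sphere^{n-1}(\Rend)$ with uniformly bounded perimeter. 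By $BV$ compactness a subsequence converges in $L^1_{\mathrm{loc}}$ to a Caccioppoli set $E_\infty$ with $E_T\subseteq E_\infty\subseteq E_{\Rend}$ (mod null sets), and lower semicontinuity of perimeter shows $E_\infty$ realizes $I$. Because $E_\infty\supseteq E_T$ contains a punctured neighborhood of $S$, the closure of $\partial^*E_\infty$ is a compact subset of $\R^n\setminus S$, so $\Sigma\coloneqq\reg(\partial^*E_\infty)$ is a bounding hypersurface whose region inside is $E_\infty$; in particular $\Sigma$ encloses $T$, and it is outer area minimizing since any Caccioppoli set containing $E_\infty$ also contains $E_T$, hence belongs to $\mathcal C$ and has perimeter at least $I=\Hm{g_\epsilon}{n-1}(\Sigma)$.

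\textbf{Detachment, stationarity, and regularity.} It remains to see that $\Sigma$ is stationary with small singular set. The key point is that $\overline{\partial^*E_\infty}$ is disjoint from both barriers. If it touched $\overline T$, then on the contact set the first-variation inequality for the constrained minimization — outward perturbations of $E_\infty$ stay in $\mathcal C$ and in $E_{\Rend}$, so they cannot decrease perimeter — would force the mean curvature of the smooth hypersurface $T$ to be non-negative with respect to the outward normal, contradicting the hypothesis; likewise, touching $\sphere^{n-1}(\Rend)$ is excluded because inward perturbations are then admissible and contradict the strict positivity of its mean curvature. Hence $\partial^*E_\infty$ lies in the open region strictly between $T$ and $\sphere^{n-1}(\Rend)$, where compactly supported variations of $E_\infty$ in either direction are admissible competitors; thus $E_\infty$ is an unconstrained local perimeter minimizer near its boundary, so $\Sigma$ is stationary (indeed stable), and the classical interior regularity theory for minimizing boundaries gives $\HmE{\alpha}(\sing(\Sigma))=0$ for every $\alpha>n-8$ with $\alpha\geq 0$.

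\textbf{Main obstacle.} The only delicate step is the detachment of $\partial^*E_\infty$ from the obstacle $T$: only once this is known is $E_\infty$ a genuine local minimizer and $\Sigma$ honestly stationary rather than a mere one-sided minimizer. This rests on the strict sign of the mean curvature of the smooth barrier together with the Euler–Lagrange inequality of the obstacle-type problem (equivalently, a maximum-principle comparison as in \prettyref{prop:minsurf-a-priori-bound}); the remaining ingredients are routine geometric measure theory.
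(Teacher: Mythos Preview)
Your proof is correct and follows essentially the same route as the paper: minimize perimeter among Caccioppoli sets containing the region inside $T$ and contained in the region inside $\sphere^{n-1}(\Rend)$, then use the strict mean-curvature signs of $T$ and $\sphere^{n-1}(\Rend)$ to detach the minimizer from both obstacles, so that it becomes an unconstrained local perimeter minimizer with the standard codimension-$7$ singular set. The only cosmetic differences are that the paper cites Giusti's existence theorem directly rather than spelling out the $BV$ compactness step, and invokes the Solomon--White maximum principle by name for the detachment where you phrase it as an Euler--Lagrange (first-variation) inequality at the obstacle; these are the same argument in different dress.
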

\begin{proof}
Let \(L \subset \R^n\) be the closure of the region inside \(T\).
Let \(\Omega \subset \R^n\) be the open region between \(T\) and \(\sphere^{n-1}(\Rend)\).
Consider the Caccioppoli sets \(F\) such that \(L \subseteq F \subseteq L \cup \Omega\).
By the existence theorem for minimal sets (see for instance \cite[Theorem~1.20]{Giusti84}) there is a set \(E\) which minimizes perimeter among all such sets.
This minimizer \(E\) may be chosen such that \(\partial E = \overline{\partial^* E}\); see \cite[Theorem~4.4]{Giusti84}.
Here \(\partial^* E\) is the reduced boundary of \(E\); see \cite[Definition 3.3]{Giusti84}.

The reduced boundary \(\partial^* E\) is a rectifiable varifold (see \cite[Theorem~14.3]{Simon}) which is area minimizing with respect to \(T\) and \(S(\Rend)\) as obstacles, since \(E\) minimizes perimeter with respect to such variations.
By \prettyref{prop:large-coordinate-sphere} the sphere \(\sphere^{n-1}(\Rend)\) has positive mean curvature, and by hypothesis \(T\) has negative mean curvature.
Hence it follows from \solomonwhite{} that \(\overline{\partial^*E}\) does not intersect \(\partial \Omega\).
This means that \(\partial^* E\) is actually a solution to an area minimization problem without obstacles, and hence smooth (compare \cite[Theorem~8.4]{Giusti84}).
Let \(\Sigma = \partial^* E\).
Now \(\sing \Sigma \subseteq \partial E \setminus \partial^* E\), and it holds by \cite[Theorem~11.8]{Giusti84} that \(\HmE{\alpha}(\sing(\partial E \setminus \partial^* E)) = 0\) for all \(\alpha > n - 8\) and \(\alpha \geq 0\).

By construction, \(\Sigma\) encloses \(T\).
It coincides with the global area minimizer outside of \(T\) since the global area minimizer cannot intersect the region outside \(\sphere^{n-1}(\Rend)\) (which by \prettyref{prop:large-coordinate-sphere} can be foliated by positive mean curvature spheres) without contradicting the maximum principle of Solomon and White.
Hence \(\Sigma\) is also outer area minimizing.
\end{proof}

By \prettyref{prop:inner-barriers}, the tubular hypersurface \(\Tub{S}{\Cinner \epsilon}\) has negative mean curvature if \(\epsilon > 0\) is sufficiently small, which yields the following.
\begin{cor}\label{cor:outer-minimizing-existence}
For all sufficiently small \(\epsilon > 0\) there is at least one outer area minimizing stationary hypersurface \(\Sigma\) in \((\R^n \setminus S, g_\epsilon)\) which encloses the end at \(S\) and is such that \(\Hm{h_\infty}{\alpha}(\sing(\Sigma)) = 0\) if \(\alpha > n - 8\) and \(\alpha \geq 0\).
\end{cor}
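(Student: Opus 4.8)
The plan is to deduce the corollary from \prettyref{prop:outer-minimizing-existence} by taking the obstacle hypersurface to be $T = \Tub{S}{\Cinner \epsilon}$. First I would fix $\epsilon > 0$ small enough that the map $\zeta \mapsto \exp^\delta(\Cinner \epsilon\, \zeta)$ is an embedding of $UNS$ into $\R^n$, which is possible because $S$ is a compact embedded submanifold and the normal exponential map restricts to a diffeomorphism onto a tubular neighborhood for small radius; then $T = \Tub{S}{\Cinner \epsilon}$ is a smooth compact embedded hypersurface, in particular nonsingular and diffeomorphic to $UNS$. Shrinking $\epsilon$ further if necessary, \prettyref{prop:inner-barriers} applies and tells us that $\Tub{S}{a}$ has negative mean curvature in $(\R^n \setminus S, g_\epsilon)$ for every $0 < a \leq \Cinner \epsilon$; in particular $T$ itself has negative mean curvature.

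Next I would verify the remaining hypotheses of \prettyref{prop:outer-minimizing-existence}. The hypersurface $T$ is bounding: its closure is compact, it is the reduced boundary of the Caccioppoli set consisting of the open solid tube $\exp^\delta(\{a \zeta \mid \zeta \in UNS,\, 0 < a < \Cinner \epsilon\})$ around $S$, and the complement of this set is a neighborhood of the asymptotically Euclidean end. Moreover $T$ encloses the end at $S$, since the region inside $T$ contains the region inside $\Tub{S}{a}$ for every $0 < a < \Cinner \epsilon$, hence for all sufficiently small $a > 0$.

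With these hypotheses in place, \prettyref{prop:outer-minimizing-existence} produces an outer area minimizing stationary hypersurface $\Sigma$ in $(\R^n \setminus S, g_\epsilon)$ which encloses $T$ and satisfies $\Hm{h_\infty}{\alpha}(\sing(\Sigma)) = 0$ for all $\alpha > n - 8$ with $\alpha \geq 0$. Since $\Sigma$ encloses $T = \Tub{S}{\Cinner \epsilon}$ and $T$ encloses $\Tub{S}{a}$ for all small $a$, the containment of the regions inside is transitive, so $\Sigma$ encloses the end at $S$ as well, which is the assertion of the corollary. There is no genuine obstacle in this argument; the only points requiring a moment's care are the choice of a single $\epsilon$ small enough for both the embeddedness of the tube and the conclusion of \prettyref{prop:inner-barriers}, and the observation that the \enquote{encloses} relation is transitive.
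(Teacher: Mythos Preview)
Your argument is correct and follows exactly the approach the paper intends: the paper's justification is the single sentence preceding the corollary, namely that \prettyref{prop:inner-barriers} gives \(\Tub{S}{\Cinner\epsilon}\) negative mean curvature for small \(\epsilon\), after which \prettyref{prop:outer-minimizing-existence} applies with \(T = \Tub{S}{\Cinner\epsilon}\). Your added verifications (embeddedness of the tube, the bounding property, and transitivity of enclosing) are routine but correct elaborations of that one line.
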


Note that it follows from \prettyref{prop:global-graphicality} that the hypersurface \(\Sigma\) from \prettyref{cor:outer-minimizing-existence} is nonsingular.

To prove that there is a unique outermost outer area minimizing stationary hypersurface, we also need to be able to find such hypersurfaces outside the union of two obstacles with possibly nonempty singular parts.
However, in this case, we only need to consider obstacles which are outer area minimizing and stationary.

\begin{prop}\label{prop:pairwise-outer-existence}
Let \(\Sigma_1\) and \(\Sigma_2\) be outer area minimizing stationary hypersurfaces in \((\R^n \setminus S, g_\epsilon)\) which enclose the end at \(S\) and have \(\HmE{n-3}(\sing(\Sigma_1)) = \HmE{n-3}(\sing(\Sigma_2)) = 0\).
If \(\epsilon > 0\) is sufficiently small, there is an outer area minimizing stationary hypersurface \(\Sigma\) which encloses both \(\Sigma_1\) and \(\Sigma_2\), and is such that \(\Hm{h_\infty}{\alpha}(\sing(\Sigma)) = 0\) if \(\alpha > n - 8\) and \(\alpha \geq 0\).
\end{prop}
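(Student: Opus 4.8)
The plan is to repeat the obstacle-problem construction of \prettyref{prop:outer-minimizing-existence}, this time using the union of the regions inside \(\Sigma_1\) and \(\Sigma_2\) as the obstacle. Write \(E_1\) and \(E_2\) for these regions; they are Caccioppoli sets with compact closure, and by \prettyref{prop:minsurf-a-priori-bound} they are contained in the region inside \(\sphere^{n-1}(\Rend)\). Take \(\epsilon\) small enough that all results of the preceding sections apply; then \prettyref{prop:location-of-stationary-hypersurfaces} places \(\Sigma_1\) and \(\Sigma_2\) in the region between \(\Tub{S}{\Cinner \epsilon}\) and \(\Tub{S}{\Couter \epsilon}\), and \prettyref{prop:global-graphicality} shows that they are nonsingular, so that \(\partial(E_1 \cup E_2) \subseteq \Sigma_1 \cup \Sigma_2\). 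Among Caccioppoli sets \(F\) with \(E_1 \cup E_2 \subseteq F\) and \(F\) contained in the region inside \(\sphere^{n-1}(\Rend)\), the existence theorem for minimal sets \cite[Theorem~1.20]{Giusti84} yields a perimeter minimizer \(E\), which we choose with \(\partial E = \overline{\partial^* E}\) by \cite[Theorem~4.4]{Giusti84}; set \(\Sigma \coloneqq \partial^* E\). Exactly as in \prettyref{prop:outer-minimizing-existence}, \cite[Theorem~11.8]{Giusti84} gives the asserted bound on \(\sing(\Sigma)\). Because \(E \supseteq E_1 \cup E_2\), the hypersurface \(\Sigma\) encloses both \(\Sigma_1\) and \(\Sigma_2\); and because \(E\) contains \(E_1\), which in turn contains the region inside \(\Tub{S}{\Cinner \epsilon}\), the closure \(\overline\Sigma\) is a compact subset of the region between \(\Tub{S}{\Cinner \epsilon}\) and \(\sphere^{n-1}(\Rend)\), in particular bounded away from \(S\).

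As in \prettyref{prop:outer-minimizing-existence} and \prettyref{prop:minsurf-a-priori-bound}, the positive mean curvature of \(\sphere^{n-1}(\Rend)\) from \prettyref{prop:large-coordinate-sphere} together with \solomonwhite{} shows that \(\overline{\partial^* E}\) cannot meet \(\sphere^{n-1}(\Rend)\). Hence \(E\) is a perimeter minimizer among \emph{all} Caccioppoli sets containing \(E_1 \cup E_2\), and a fortiori among all Caccioppoli sets containing \(E\); that is, \(\Sigma\) is outer area minimizing.

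The main step is to show that \(\Sigma\) is stationary. Away from \(\overline{E_1 \cup E_2}\) the set \(E\) is a local perimeter minimizer with no obstacle, so \(\Sigma\) is there a smooth minimal hypersurface (away from its singular set). The key observation is that \(\Sigma_1\) and \(\Sigma_2\) are \emph{themselves} stationary, so away from the intersection \(\Sigma_1 \cap \Sigma_2\) the boundary of the obstacle \(E_1 \cup E_2\) is locally a single smooth minimal hypersurface. Thus if \(\Sigma\) touches \(\Sigma_1\) at a point \(p \notin \Sigma_2\), then \(\Sigma\) lies on the side of the minimal hypersurface \(\Sigma_1\) away from \(E_1\) and touches it at \(p\), and \(\Sigma\) is stationary where it is free while carrying the one-sided mean-curvature sign of an obstacle minimizer where it coincides with \(\Sigma_1\); \solomonwhite{} then forces \(\Sigma\) to coincide with \(\Sigma_1\) in a neighborhood of \(p\), so that \(\Sigma\) has vanishing mean curvature there. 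The only obstacle points not reached by this argument lie in \(\Sigma_1 \cap \Sigma_2\), which is contained in a union of two smooth hypersurfaces and therefore has dimension at most \(n-2\), so \(\HmE{n-1}(\Sigma_1 \cap \Sigma_2) = 0\). Since \(\partial^* E\) is the reduced boundary of a perimeter minimizer its first variation has no singular part, and \(\Sigma\) is smooth with vanishing mean curvature at \(\HmE{n-1}\)-almost every point of \(\overline\Sigma\); hence \(\Sigma\) is stationary. Together with the two preceding paragraphs this completes the proof.

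The hard part is this stationarity argument: one must upgrade the one-sided (obstacle-problem) minimality of \(E\) to genuine vanishing of the mean curvature of \(\partial^* E\), and the essential inputs are that the walls of the obstacle \(E_1 \cup E_2\) are minimal because \(\Sigma_1\) and \(\Sigma_2\) are stationary, the maximum principle of Solomon and White, and the fact that the non-smooth locus \(\Sigma_1 \cap \Sigma_2\) of the obstacle is too small to carry any mean curvature.
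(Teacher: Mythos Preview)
Your setup and the first two paragraphs match the paper's proof almost verbatim. The gap is in the third paragraph, at the sentence ``Since \(\partial^* E\) is the reduced boundary of a perimeter minimizer its first variation has no singular part.'' This is exactly the point that needs work, and you have not justified it. An \emph{outward} perimeter minimizer with obstacle certainly has its first variation nonnegative on outward variations, but nothing you have said rules out a nonzero inward-pointing singular contribution supported on the ridge \(\Sigma_1 \cap \Sigma_2\). Concretely, think of two minimal hypersurfaces crossing transversally: the boundary of the union of the enclosed regions has a codimension-two edge, and the associated varifold has a singular first variation concentrated there, pointing into the obstacle. Outward minimality of \(E\) is perfectly compatible with such a term, so ``\(H=0\) \(\HmE{n-1}\)-a.e.'' does \emph{not} by itself give stationarity of \(\Sigma\). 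Your dimension count on \(\Sigma_1 \cap \Sigma_2\) only says this bad set has zero \((n-1)\)-measure; it does not prevent first-variation mass from living on it.

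The paper confronts this head on: it proves that \(\overline\Sigma \cap \Sigma_1 \cap \Sigma_2 = \emptyset\). At a putative touching point \(x \in \Sigma_1 \cap \Sigma_2\) it builds an auxiliary smooth hypersurface \(\Sigma_* \subset \overline{E_1 \cup E_2}\) through \(x\) with nonpositive mean curvature (using that both \(\Sigma_1\) and \(\Sigma_2\) are minimal, so one can balance the mean curvature of \(\Sigma_1 \cap \Sigma_2\) against curvature in the remaining normal direction), and then applies the Solomon--White argument with \(\Sigma_*\) as barrier, checking that the resulting area-decreasing vector field is outward along both \(\Sigma_1\) and \(\Sigma_2\). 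Once \(\overline\Sigma\) is known to miss \(\Sigma_1 \cap \Sigma_2\), the remaining contact set is handled componentwise via the maximum principle and White's subtraction trick, yielding genuine stationarity. This barrier construction at the intersection is the substantive step your proposal is missing.
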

\begin{proof}
Assume without loss of generality that \(\Sigma_1\) and \(\Sigma_2\) have no common connected components.

It follows from \prettyref{prop:global-graphicality} that \(\Sigma_1\) and \(\Sigma_2\) are nonsingular.
Let \(E_1\) denote the closure of the region inside \(\Sigma_1\) and let \(E_2\) denote the closure of the region inside \(\Sigma_2\).
Let \(L \coloneqq E_1 \cup E_2 \subset \R^n\).
Let \(\Omega \subset \R^n\) be the open region between \(\partial L\) and \(\sphere^{n-1}(\Rend)\).
As in \prettyref{prop:outer-minimizing-existence} we obtain a Caccioppoli set \(E\) with \(\partial E = \overline{\partial^* E}\) which minimizes perimeter among those which contain \(E_1 \cup E_2\).
Let \(\Sigma = \partial^* E\).
By the maximum principle of Solomon and White, \(\overline \Sigma\) cannot intersect the outer boundary \(\sphere^{n-1}(\Rend)\).

We begin by proving that \(\overline \Sigma\) cannot intersect \(\Sigma_1 \cap \Sigma_2\).
Suppose for contradiction that there is a point \(x \in \overline \Sigma \cap \Sigma_1 \cap \Sigma_2\).
Let \(\nu_i\) denote the outward-directed unit normal vector field of \(\Sigma_i\).
It cannot hold that \(\nu_1 = -\nu_2\) anywhere, since \prettyref{prop:global-graphicality} tells us that \(\Sigma_1\) and \(\Sigma_2\) are graphs over \(UNS\) in normal coordinates.
Let \((\nu_*)_x\) be the unit vector in the direction \((\nu_1)_x + (\nu_2)_x\), so that \(g_\epsilon((\nu_*)_x, (\nu_i)_x) > 0\).

For the proof we need, in a neighborhood of \(x\), a stationary nonsingular hypersurface \(\Sigma_*\) with \(x \in \Sigma_* \subset \overline{E_1 \cup E_2}\), and normal vector \((\nu_*)_x\) at \(x\).
If \(\nu_1 = \nu_2\) at \(x\), we choose \(\Sigma_*\) to be an open neighborhood of \(x\) in \(\Sigma_1\).
If not, then the intersection of \(\Sigma_1\) and \(\Sigma_2\) is transverse, in which case \(I \coloneqq \Sigma_1 \cap \Sigma_2\) is a smooth submanifold of codimension 2 in \(\R^n\).
Extend \(\nu_*\) be the unit vector field on \(I\) in direction \(\nu_1 + \nu_2\).
Let \(\Sigma_*\) be a hypersurface which contains \(I\), is orthogonal to \(\nu_*\) along \(I\), and has nonpositive mean curvature.
This exists since the mean curvature of \(I\) in direction \(\nu_*\) can be compensated by the curvature in the direction orthogonal to \(I\) and \(\nu_*\).
After shrinking \(\Sigma_*\) if necessary it holds that \(\Sigma_* \subset \overline{E_1 \cup E_2}\).
Extend \(\nu_*\) to a unit normal vector field on all of \(\Sigma_*\).

If \(\Sigma_* \subseteq \overline \Sigma\), it holds that \(\Sigma_*\), \(\Sigma_1\) and \(\Sigma_2\) all agree in a neighborhood of \(x\) since \(\Sigma\) encloses \(\Sigma_1\) and \(\Sigma_2\), which in turn enclose \(\Sigma_*\).
This proves that \(\overline \Sigma \cap \Sigma_1 \cap \Sigma_2\) is an open subset of both \(\Sigma_1\) and \(\Sigma_2\).
It is also a closed subset since \(\overline \Sigma\), \(\Sigma_1\) and \(\Sigma_2\) are closed sets.
Since we assumed that \(\Sigma_1\) and \(\Sigma_2\) have no common connected component, this is a contradiction.

Suppose instead that \(\Sigma_*\) is not a subset of \(\overline \Sigma\).
The proof of \cite[Theorem,~p.~686]{SolomonWhite89} then gives a vector field \(v\) which defines a variation which strictly decreases the area of \(\overline \Sigma\).
It is not obvious that this vector field is outward-directed along \(\Sigma_1\) and \(\Sigma_2\).
However, by choosing \(\epsilon\) and \(s\) in \cite[pp.~687-690]{SolomonWhite89} sufficiently small, we may arrange that the functions \(u_{s, t, \epsilon}\) in that proof are \(C^1\)-close to the function \(u\) for all \(t\) sufficiently close to \(0\).
This means that \(\tau\) may be chosen arbitrarily close to \(0\), so that the function \(u_{s, \tau}\) in the proof is arbitrarily close to \(u\) in \(C^1\) norm.
This, in turn, means that the vector field \(v\) can be chosen arbitrarily close to our vector field \(\nu_*\), so that \(v\) is outward-directed along \(\Sigma_1\) and \(\Sigma_2\) in a neighborhood of \(x\).
Since \(\Sigma\) minimizes area outside of \(E_1 \cup E_2\) and the variation along \(v\) decreases area, we have a contradiction.
This completes the proof that \(\overline \Sigma \cap \Sigma_1 \cap \Sigma_2 = \emptyset\).

Let \(x \in \overline \Sigma \cap \Sigma_1\).
Then \(x \notin \Sigma_2\), so we can apply the maximum principle of Solomon and White to conclude that \(x\) is an interior point of \(\overline \Sigma \cap \Sigma_1\) in \(\Sigma_1\).
The set \(\overline \Sigma \cap \Sigma_1\) is closed in \(\Sigma_1\).
Hence every connected component of \(\Sigma_1\) is either contained in \(\overline \Sigma\) or disjoint from \(\overline \Sigma\).
Similarly, every connected component of \(\Sigma_2\) is either contained in \(\overline \Sigma\) or disjoint from \(\overline \Sigma\).

Fix \(i \in \{1, 2\}\).
We will show, using the argument from \cite[Theorem~4]{White10}, that every connected component of \(\overline \Sigma\) either coincides with a connected component of \(\Sigma_i\) or is disjoint from \(\Sigma_i\).
Suppose for contradiction that there is a point \(x \in \Sigma_i \cap (\overline{\overline \Sigma \setminus \Sigma_i})\).
Let \(W\) be the connected component of \(\Sigma_1\) containing \(x\).
Let \(W' = \Sigma - W\), where we view \(\Sigma\) and \(W\) as unit density rectifiable varifolds.
Since \(W\) is stationary and \(\Sigma\) minimizes area to first order in the complement of \(E_1 \cup E_2\), it holds that \(W'\) minimizes area to first order in this region.
We can then apply the maximum principle of Solomon and White in a neighborhood of \(x\) to see that the support of \(W'\) includes \(W\), which is a contradiction.
Hence every connected component of \(\overline \Sigma\) either coincides with a connected component of \(\Sigma_i\) or is disjoint from \(\Sigma_i\).

The connected components which coincide with connected components of \(\Sigma_1\) or \(\Sigma_2\) are stationary, since the hypersurfaces \(\Sigma_1\) and \(\Sigma_2\) are stationary.
The connected components which are disjoint from \(\Sigma_1\) and \(\Sigma_2\) are solutions to an area minimization problem without obstacles, and hence stationary.
This means that \(\Sigma\) is stationary.
Since it is the global area minimizer in the complement of \(E_1 \cup E_2\) it is also outer area minimizing.
As in \prettyref{prop:outer-minimizing-existence} it follows that \(\Hm{h_\infty}{\alpha}(\sing(\Sigma)) = 0\) if \(\alpha > n - 8\) and \(\alpha \geq 0\).
\end{proof}

Having proved that there is an outer area minimizing stationary hypersurface enclosing the end at \(S\), we turn our attention to proving that there is an outermost such hypersurface, and that this outermost hypersurface is unique.
\begin{prop}\label{prop:outermost-uniqueness}
For all sufficiently small \(\epsilon > 0\), there is a unique outermost outer area minimizing stationary hypersurface \(\Sigma_\epsilon\) in \((\R^n \setminus S, g_\epsilon)\) which encloses the end at \(S\) and has \(\HmE{n-3}(\sing(\Sigma_\epsilon)) = 0\).
\end{prop}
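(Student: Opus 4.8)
The plan is to realize the outermost hypersurface as the limit of an increasing sequence of members of the family
\(
\mathcal F
\)
of outer area minimizing stationary hypersurfaces in \((\R^n \setminus S, g_\epsilon)\) which enclose the end at \(S\) and satisfy \(\HmE{n-3}(\sing(\cdot)) = 0\). Write \(|F|\) for the \(g_\epsilon\)-volume of a measurable set \(F\). By \prettyref{cor:outer-minimizing-existence}, together with the inequality \(n-3 > n-8\), the family \(\mathcal F\) is nonempty for all sufficiently small \(\epsilon\). By \prettyref{prop:global-graphicality} every \(\Sigma \in \mathcal F\) is nonsingular and is the graph of a smooth function \(\psi_\Sigma\) on \(UNS\) in normal coordinates for \(S\), and by \prettyref{prop:location-of-stationary-hypersurfaces} we have \(\Cinner \epsilon < \psi_\Sigma < \Couter \epsilon\); in particular the region \(E_\Sigma\) inside \(\Sigma\) has volume bounded independently of \(\Sigma\), and \(E_\Sigma\) contains the region between \(S\) and \(\Tub{S}{\Cinner\epsilon}\) while being contained in the region between \(S\) and \(\Tub{S}{\Couter\epsilon}\). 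Set \(V \coloneqq \sup_{\Sigma \in \mathcal F} |E_\Sigma| < \infty\), pick \(\Sigma_j' \in \mathcal F\) with \(|E_{\Sigma_j'}| \to V\), and apply \prettyref{prop:pairwise-outer-existence} inductively — its output again lies in \(\mathcal F\), since it encloses a given member of \(\mathcal F\), hence the end at \(S\), and satisfies \(\HmE{n-3}(\sing(\cdot)) = 0\) as \(n-3 > n-8\) — to produce \(\Sigma_j \in \mathcal F\) with \(E_{\Sigma_{j-1}} \cup E_{\Sigma_j'} \subseteq E_{\Sigma_j}\). Then \(E_{\Sigma_1} \subseteq E_{\Sigma_2} \subseteq \cdots\), \(|E_{\Sigma_j}| \to V\), and \(E_\infty \coloneqq \bigcup_j E_{\Sigma_j}\) satisfies \(|E_\infty| = V\) and \(E_{\Sigma_j} \to E_\infty\) in \(L^1\).

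Next I would pass to the limit. The hypersurfaces \(\Sigma_j\) are stable, they all meet the compact set \(K\) equal to the closure of the region between \(\Tub{S}{\Cinner \epsilon}\) and \(\Tub{S}{\Couter \epsilon}\), they satisfy \(\HmE{n-3}(\sing(\Sigma_j)) = 0\), and they have uniformly bounded area \(\Hm{g_\epsilon}{n-1}(\Sigma_j) \leq \Hm{g_\epsilon}{n-1}(\sphere^{n-1}(\Rend))\) because they are outer area minimizing. Hence \prettyref{thm:minimal-surface-convergence}, applied with \(\Omega = \R^n \setminus S\) and the constant metric sequence \(h_k = h_\infty = g_\epsilon\), yields a stationary stable hypersurface \(\Sigma_\infty\) with \(\Hm{g_\epsilon}{\alpha}(\sing(\Sigma_\infty)) = 0\) for \(\alpha > n-8\); in particular \(\HmE{n-3}(\sing(\Sigma_\infty)) = 0\). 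Because the graphs \(\psi_{\Sigma_j}\) increase monotonically, the varifold limit has multiplicity one and is the graph over \(UNS\) of the pointwise limit \(\psi_\infty \coloneqq \lim_j \psi_{\Sigma_j}\), which takes values in \([\Cinner\epsilon, \Couter\epsilon]\); thus \(\Sigma_\infty = \partial^* E_\infty\), the region inside \(\Sigma_\infty\) is \(E_\infty\), and \(\Sigma_\infty\) encloses the end at \(S\) (each \(\Sigma_j\) does) while staying in the tube region, away from \(S\). To see \(\Sigma_\infty\) is outer area minimizing, let \(F\) be any Caccioppoli set with \(E_\infty \subseteq F\); then \(E_{\Sigma_j} \subseteq F\) for all \(j\), so \(\Hm{g_\epsilon}{n-1}(\Sigma_j) = \Hm{g_\epsilon}{n-1}(\partial^* E_{\Sigma_j}) \leq \Hm{g_\epsilon}{n-1}(\partial^* F)\), and lower semicontinuity of perimeter under \(L^1\) convergence gives \(\Hm{g_\epsilon}{n-1}(\Sigma_\infty) = \Hm{g_\epsilon}{n-1}(\partial^* E_\infty) \leq \liminf_j \Hm{g_\epsilon}{n-1}(\partial^* E_{\Sigma_j}) \leq \Hm{g_\epsilon}{n-1}(\partial^* F)\). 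Therefore \(\Sigma_\infty \in \mathcal F\), and \prettyref{prop:global-graphicality} confirms it is nonsingular.

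Finally I would show that \(\Sigma_\infty\) encloses every member of \(\mathcal F\), which gives both existence and uniqueness of the outermost element. Given \(\Sigma \in \mathcal F\), apply \prettyref{prop:pairwise-outer-existence} to \(\Sigma\) and \(\Sigma_j\) to obtain \(\Sigma_j'' \in \mathcal F\) enclosing both; then \(|E_{\Sigma_j} \cup E_\Sigma| \leq |E_{\Sigma_j''}| \leq V\), so \(|E_\Sigma \setminus E_{\Sigma_j}| = |E_{\Sigma_j} \cup E_\Sigma| - |E_{\Sigma_j}| \leq V - |E_{\Sigma_j}| \to 0\), and hence \(|E_\Sigma \setminus E_\infty| = 0\). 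Since \(E_\Sigma\) and \(E_\infty\) are the regions under the graphs of the continuous functions \(\psi_\Sigma\) and \(\psi_\infty\), this forces \(\psi_\Sigma \leq \psi_\infty\) everywhere, i.e.\ \(\Sigma_\infty\) encloses \(\Sigma\). Consequently \(\Sigma_\infty\) is an outermost element of \(\mathcal F\), and any outermost element \(\Sigma'\) is enclosed by \(\Sigma_\infty \in \mathcal F\), which by the definition of outermost forces \(\Sigma' = \Sigma_\infty\). Setting \(\Sigma_\epsilon \coloneqq \Sigma_\infty\) then completes the proof.

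I expect the main obstacle to be the passage to the limit in the second paragraph: guaranteeing that the increasing sequence of minimal graphs converges to a hypersurface with the regularity needed for membership in \(\mathcal F\), and that the outer area minimizing property survives in the limit. I would handle the regularity and convergence with \prettyref{thm:minimal-surface-convergence} — the monotonicity of the graphs \(\psi_{\Sigma_j}\) removes any multiplicity ambiguity in the limit — and the preservation of the minimizing property with lower semicontinuity of perimeter combined with the obstacle inclusions \(E_{\Sigma_j} \subseteq F\).
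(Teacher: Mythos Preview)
Your proof is correct and uses the same toolbox as the paper (\prettyref{cor:outer-minimizing-existence}, \prettyref{prop:pairwise-outer-existence}, \prettyref{thm:minimal-surface-convergence}, and \prettyref{prop:global-graphicality}), but the organization is genuinely different. The paper argues existence abstractly via Zorn's lemma: it partially orders \(\mathcal F\) by enclosure, and for an arbitrary chain extracts a countable increasing subsequence (using second countability of \(\R^n\)), passes to the limit with \prettyref{thm:minimal-surface-convergence}, and then derives uniqueness in a separate step by applying \prettyref{prop:pairwise-outer-existence} to two hypothetical outermost elements. You instead run a direct volume-maximization: you build an increasing sequence by iterated use of \prettyref{prop:pairwise-outer-existence}, pass to the limit, and then show via the same pairwise proposition and a volume-squeeze that the limit encloses every element of \(\mathcal F\), getting existence and uniqueness in one stroke. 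Your approach is constructive and avoids the Axiom of Choice; it also makes explicit the monotonicity of the graph functions \(\psi_{\Sigma_j}\), which is exactly what rules out higher multiplicity in the varifold limit and lets you identify \(\Sigma_\infty\) with \(\partial^* E_\infty\). The paper's version is more compact but leaves this multiplicity point implicit (it simply asserts \(\Hm{g_\epsilon}{n-1}(\Sigma_\infty) = \lim_k \Hm{g_\epsilon}{n-1}(\Sigma_k)\)). Your use of lower semicontinuity of perimeter to transfer the outer area minimizing property to the limit is also a cleaner justification than the paper gives.
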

\begin{proof}
We begin by proving that there is at least one outermost outer area minimizing stationary hypersurface.
The set of outer area minimizing stationary hypersurfaces \(\Sigma\) which enclose the end at \(S\) and have \(\HmE{n-3}(\sing(\Sigma)) = 0\) is nonempty by \prettyref{cor:outer-minimizing-existence} and forms a partially ordered set under the relation that \(\Sigma_1 \leq \Sigma_2\) if \(\Sigma_2\) encloses \(\Sigma_1\).
We want to prove that there is a maximal element under this partial order, since such an element is outermost.
By Zorn's lemma it is sufficient to verify that every nonempty chain has an upper bound.
Let \(A\) be a chain.
For \(\Sigma \in A\), let \(\Omega_\Sigma\) be the interior of the region inside \(\Sigma\).
Then \(\bigcup_{\Sigma \in A} \Omega_\Sigma\) is an open cover of itself, and since it is a subset of \(\R^n\) it has a countable subcover \(\bigcup_{i = 1}^\infty \Omega_{\Sigma_i}\).
The sequence \(\Omega_{\Sigma_1} \subseteq \Omega_{\Sigma_2} \subseteq \dots\) is increasing since \(A\) is a chain.
By \prettyref{thm:minimal-surface-convergence} there is a subsequence of \(\Sigma_k\) converging as varifolds to a stationary hypersurface \(\Sigma_\infty\) with \(\Hm{g_\epsilon}{\alpha}(\sing(\Sigma_\infty)) = 0\) if \(\alpha > n - 8\) and \(\alpha \geq 0\).
Then \(\Hm{g_\epsilon}{n-1}(\Sigma_\infty) = \lim_{k \to \infty} \Hm{g_\epsilon}{n-1}(\Sigma_k)\), so since each \(\Sigma_k\) is outer area minimizing it holds that \(\Sigma_\infty\) is outer area minimizing.
Hence \(\Sigma\) is an upper bound for the chain.
Hence Zorn's lemma tells us that there is an outermost outer area minimizing stationary hypersurface \(\Sigma\) which encloses the end at \(S\) and has \(\HmE{n-3}(\sing(\Sigma)) = 0\).

Suppose that there were two different such hypersurfaces \(\Sigma_1\) and \(\Sigma_2\).
Then the previous proposition produces one which encloses both of them, which is a contradiction.
Hence there is a unique outermost outer area minimizing stationary hypersurface \(\Sigma_\epsilon\) which encloses the end at \(S\) and has \(\HmE{n-3}(\sing(\Sigma_\epsilon)) = 0\).
\end{proof}

The uniqueness of \(\Sigma_\epsilon\) implies that it coincides with the outermost apparent horizon, as shown in the following proposition.
\begin{prop}\label{prop:outermost-apparent-horizon}
If \(\epsilon > 0\) is sufficiently small, the hypersurface \(\Sigma_\epsilon\) in \((\R^n \setminus S, g_\epsilon)\) is the boundary of the trapped region.
\end{prop}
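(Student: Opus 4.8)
The plan is to pin down the trapped region $\mathcal{T}$ exactly, by showing that it equals the region enclosed by $\Sigma_\epsilon$ with the submanifold $S$ deleted; its topological boundary in $\R^n \setminus S$ is then precisely $\Sigma_\epsilon$. This requires two inclusions.

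First I would check that everything inside $\Sigma_\epsilon$ is trapped. Since $\Sigma_\epsilon$ is stationary and $\Tub{S}{\Cinner \epsilon}$ has negative mean curvature by \prettyref{prop:inner-barriers}, \solomonwhite{} forces $\Sigma_\epsilon$ to lie strictly outside $\Tub{S}{\Cinner \epsilon}$. Hence for each $a$ with $0 < a \le \Cinner \epsilon$ the closed region $K_a$ caught between $\Tub{S}{a}$ and $\Sigma_\epsilon$ is a compact subset of $\R^n \setminus S$ whose smooth boundary $\Tub{S}{a} \cup \Sigma_\epsilon$ is weakly outer trapped: the mean curvature vanishes on the $\Sigma_\epsilon$ part because $\Sigma_\epsilon$ is stationary, and it is negative on the $\Tub{S}{a}$ part with respect to the normal pointing away from $S$, again by \prettyref{prop:inner-barriers}. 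Therefore $K_a \subseteq \mathcal{T}$, and letting $a \to 0$ shows that the open region strictly inside $\Sigma_\epsilon$ with $S$ removed, together with $\Sigma_\epsilon$ itself, lies in $\mathcal{T}$.

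The substantive half is the reverse inclusion, that nothing outside $\Sigma_\epsilon$ is trapped. For this I would prove that every weakly outer trapped hypersurface $\Sigma'$ is enclosed by $\Sigma_\epsilon$; applied to the boundary of any compact set contributing to $\mathcal{T}$ this gives $\mathcal{T} \subseteq (\text{region inside } \Sigma_\epsilon) \setminus S$. Given $\Sigma'$, let $E' \subset \R^n$ be the closed region it bounds, which contains $S$ because $\Sigma'$ separates the asymptotically Euclidean end from $S$, and run the obstacle minimization from the proof of \prettyref{prop:outer-minimizing-existence}: minimize perimeter among Caccioppoli sets $F$ with $E' \subseteq F$ and $F$ contained in the region bounded by $\sphere^{n-1}(\Rend)$, choosing a minimizer $E$ with $\partial E = \overline{\partial^* E}$, and set $\Sigma'' \coloneqq \partial^* E$. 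By \prettyref{prop:large-coordinate-sphere} and \solomonwhite{}, the closure of $\Sigma''$ avoids $\sphere^{n-1}(\Rend)$, and it can touch $\Sigma'$ only where the mean curvature of $\Sigma'$ vanishes, where it must coincide with a minimal portion of $\Sigma'$; arguing component by component as in the proofs of \prettyref{prop:outer-minimizing-existence} and \prettyref{prop:pairwise-outer-existence}, it follows that $\Sigma''$ is an outer area minimizing stationary hypersurface enclosing $\Sigma'$ — hence enclosing the end at $S$ — with $\HmE{\alpha}(\sing(\Sigma'')) = 0$ for all $\alpha > n-8$, so in particular $\HmE{n-3}(\sing(\Sigma'')) = 0$. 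Thus $\Sigma''$ belongs to the family considered in \prettyref{prop:outermost-uniqueness}, and since the argument there (via \prettyref{prop:pairwise-outer-existence}) shows that the outermost member $\Sigma_\epsilon$ encloses every member of that family, $\Sigma_\epsilon$ encloses $\Sigma''$, and hence $\Sigma'$. Combining the two inclusions yields $\partial \mathcal{T} = \Sigma_\epsilon$, which together with \prettyref{prop:global-graphicality} completes the proof of \prettyref{thm:main-theorem}.

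I expect the hard part to be the claim that the new minimizer $\Sigma''$ is genuinely stationary. Because the obstacle $\Sigma'$ is allowed to have vanishing — not strictly negative — mean curvature on part of itself, $\Sigma''$ may touch $\Sigma'$ along that part, and one must invoke the strong maximum principle of Solomon and White (and White) to conclude that $\Sigma''$ there coincides with a minimal piece of $\Sigma'$, so that every connected component of $\Sigma''$ is minimal; this is a mild extension of the component-by-component analysis already performed in \prettyref{prop:outer-minimizing-existence} and \prettyref{prop:pairwise-outer-existence}. A secondary technical point is the bookkeeping forced by the two ends of $\R^n \setminus S$: one has to verify that a compact set with weakly outer trapped boundary is indeed sandwiched between a small tube around $S$ and the rest of its boundary, and that the region inside $\Sigma_\epsilon$ with $S$ removed has the interior and boundary in $\R^n \setminus S$ that the argument assumes.
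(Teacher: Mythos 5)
Your proof has the same skeleton as the paper's: first that \(\Sigma_\epsilon\) together with the region it bounds lies in \(\trappedregion\), then that any weakly outer trapped bounding hypersurface must be enclosed by \(\Sigma_\epsilon\), via obstacle minimization, \solomonwhite{}, and the outermost property supplied by \prettyref{prop:outermost-uniqueness} and \prettyref{prop:pairwise-outer-existence}. Where you genuinely diverge is in how the second inclusion is organized. The paper chooses \(x_k \to x \in \partial\trappedregion\) with \(x_k\) in the interior of \(\trappedregion\), produces a weakly outer trapped \(T_k\) enclosing each \(x_k\), applies \prettyref{prop:outer-minimizing-existence} to get outer area minimizing stationary hypersurfaces \(\Sigma_k\) enclosing \(T_k\), and extracts a subsequential limit \(\Sigma'\) via \prettyref{thm:minimal-surface-convergence} before invoking the outermost property. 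You eliminate that diagonal limit entirely and argue directly from a single weakly outer trapped surface, which is somewhat simpler. More importantly, you explicitly identify that \prettyref{prop:outer-minimizing-existence} is stated for obstacles with strictly negative mean curvature, while a weakly outer trapped surface yields only nonpositive mean curvature, and you sketch the extension needed to handle contact along zero-mean-curvature portions of the obstacle: the constrained minimizer can touch such an obstacle only along components coinciding with minimal components of the obstacle, so the resulting \(\Sigma''\) is still stationary with the required regularity. The paper applies \prettyref{prop:outer-minimizing-existence} to the \(T_k\) without comment, so it is tacitly relying on the same extension you describe; making the dependence explicit as you do is a genuine improvement. Two small points to tighten: for the first inclusion you should cite \prettyref{prop:global-graphicality} so that \(\Sigma_\epsilon\) is known to be nonsingular and \(\partial K_a = \Tub{S}{a} \cup \Sigma_\epsilon\) is genuinely a smooth boundary of a compact set in \(\R^n \setminus S\); and in the second inclusion, since the boundary of a compact set contributing to \(\trappedregion\) necessarily has an inner component near \(S\) in addition to the outer one, you should isolate the bounding component facing the asymptotically Euclidean end (the boundary of the component of the complement containing that end) before applying your enclosure statement.
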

\begin{proof}
We know from \prettyref{prop:global-graphicality} that \(\Sigma_\epsilon\) is nonsingular and hence contained in the trapped region \(\trappedregion_\epsilon\) of \((\R^n \setminus S, g_\epsilon)\).

Let \(x \in \partial \trappedregion_\epsilon\).
We want to show that \(x \in \Sigma_\epsilon\).
Choose a sequence \(x_k \to x\) contained in the interior of the trapped region.
Then for each \(k\) there is a weakly outer trapped surface \(T_k\) enclosing the point \(x_k\).
Since \(T_k\) is weakly outer trapped, it also encloses the end at \(S\).
Let \(\Sigma_k\) denote an outer area minimizing stationary hypersurface enclosing \(T_k\) obtained from \prettyref{prop:outer-minimizing-existence}.
Using \prettyref{thm:minimal-surface-convergence} as in the proof of \prettyref{prop:outermost-uniqueness} we obtain a subsequential limit hypersurface \(\Sigma'\) which is still outer area minimizing, encloses the end at \(S\), and encloses or contains \(x\).
Since \(\Sigma_\epsilon\) is an outermost outer area minimizing stationary hypersurface, it encloses \(\Sigma'\) by \prettyref{prop:pairwise-outer-existence}.
Hence \(x\) is enclosed by or contained in \(\Sigma_\epsilon\).
This completes the proof.
\end{proof}

Finally, we prove the main theorem of this paper.
\begin{proof}[Proof of \prettyref{thm:main-theorem}]
There is a unique outermost outer area minimizing stationary hypersurface \(\Sigma_\epsilon\) with \(\HmE{n-3}(\sing(\Sigma_\epsilon)) = 0\) by \prettyref{prop:outermost-uniqueness}, and this hypersurface is a smooth graph over \(UNS\) in normal coordinates by \prettyref{prop:global-graphicality}.
From \prettyref{prop:outermost-apparent-horizon} we know that \(\Sigma_\epsilon\) is the outermost apparent horizon of \((\R^n \setminus S, g_\epsilon)\).
\end{proof}

\bibliographystyle{abbrv}
\bibliography{references}

\end{document}